\newcommand{\abs}[1]{\left|#1\right|}
\newcommand{\bdry}[1]{\partial #1}
\newcommand{\bigset}[1]{\big\{#1\big\}}
\newcommand{\B}{{\cal B}}
\newcommand{\D}{{\cal D}}
\newcommand{\closure}[1]{\overline{#1}}
\newcommand{\dint}{\ds{\int}}
\newcommand{\dist}[2]{\text{dist}\, (#1,#2)}
\newcommand{\ds}[1]{\displaystyle #1}
\newcommand{\eps}{\varepsilon}
\newcommand{\goodchi}{\protect\raisebox{2pt}{$\chi$}}
\newcommand{\half}{\frac{1}{2}}
\newcommand{\interior}[1]{#1^\circ}
\newcommand{\ipa}[3][]{\left<#2,#3\right>_{#1}}
\renewcommand{\L}{{\mathcal L}}
\newcommand{\M}{{\cal M}}
\newcommand{\norm}[2][]{\left\|#2\right\|_{#1}}
\renewcommand{\O}{\text{O}}
\renewcommand{\o}{\text{o}}
\newcommand{\PS}[1]{$(\text{PS})_{#1}$}
\newcommand{\pnorm}[2][]{\if #1'' \left|#2\right|_p \else \left|#2\right|_{#1} \fi}
\newcommand{\R}{\mathbb R}
\newcommand{\restr}[2]{\left.#1\right|_{#2}}
\newcommand{\seq}[1]{\left(#1\right)}
\newcommand{\set}[1]{\left\{#1\right\}}
\newcommand{\vol}[1]{\if #1'' \L \else \L(#1) \fi}
\newcommand{\W}{{\cal W}}
\newcommand{\wto}{\rightharpoonup}
\def\ocirc#1{\ifmmode\setbox0=\hbox{$#1$}\dimen0=\ht0
\advance\dimen0 by1pt\rlap{\hbox to\wd0{\hss\raise\dimen0
\hbox{\hskip.2em$\scriptscriptstyle\circ$}\hss}}#1\else
{\accent"17 #1}\fi}
\def\Xint#1{\mathchoice
{\XXint\displaystyle\textstyle{#1}}
{\XXint\textstyle\scriptstyle{#1}}
{\XXint\scriptstyle\scriptscriptstyle{#1}}
{\XXint\scriptscriptstyle\scriptscriptstyle{#1}}
\!\int}
\def\XXint#1#2#3{{\setbox0=\hbox{$#1{#2#3}{\int}$}
\vcenter{\hbox{$#2#3$}}\kern-.5\wd0}}
\def\dashint{\Xint-}
\DeclareMathOperator{\divg}{div}
\newenvironment{enumarab}{\begin{enumerate}

}{\end{enumerate}}
\newenvironment{enumroman}{\begin{enumerate}

}{\end{enumerate}}
\newtheorem{lemma}{Lemma}[section]
\newtheorem{proposition}[lemma]{Proposition}
\newtheorem{theorem}[lemma]{Theorem}
\theoremstyle{definition}
\newtheorem{definition}[lemma]{Definition}
\newtheorem*{notation}{Notation}
\numberwithin{equation}{section}
\title{\bf On nonminimizing solutions of elliptic free boundary problems\thanks{{\em MSC2010:} Primary 35R35, Secondary 35J20, 35B65
\newline \indent\; {\em Key Words and Phrases:} Elliptic free boundary problems, nonminimizing solutions, regularity of the free boundary}}
\author{\bf Kanishka Perera\\
Department of Mathematical Sciences\\
Florida Institute of Technology\\
Melbourne, FL 32901, USA\\
\em kperera@fit.edu}
\date{}
\begin{document}

\maketitle

\begin{abstract}
We present a variational framework for studying the existence and regularity of solutions to elliptic free boundary problems that do not necessarily minimize energy. As applications, we obtain mountain pass solutions of critical and subcritical superlinear free boundary problems, and establish full regularity of the free boundary in dimension $N = 2$ and partial regularity in higher dimensions.
\end{abstract}

\section{Introduction}

Existence and regularity of minimizers in elliptic free boundary problems have been studied extensively in the literature (see, e.g., \cite{MR618549, MR732100, MR1044809, MR2145284, MR861482, MR990856, MR1029856, MR973745, MR1906591, MR2082392, MR2572253, MR1009785, MR0440187, MR1664689, MR2281453, MR1620644, MR1759450} and the references therein). The purpose of this paper is to present a variational framework for studying the existence and regularity of solutions that do not necessarily minimize energy.

Let $\Omega$ be a bounded domain in $\R^N,\, N \ge 2$ with $C^{2,\alpha}$-boundary $\bdry{\Omega}$. We consider the problem
\begin{equation} \label{1001}
\left\{\begin{aligned}
- \Delta u & = g(x,(u - 1)_+) && \text{in } \Omega \setminus F(u)\\[10pt]
|\nabla u^+|^2 - |\nabla u^-|^2 & = 2 && \text{on } F(u)\\[10pt]
u & = 0 && \text{on } \bdry{\Omega},
\end{aligned}\right.
\end{equation}
where $F(u) = \bdry{\set{u > 1}}$ is the free boundary of $u$, $(u - 1)_+ = \max\, (u - 1,0)$ is the positive part of $u - 1$, $\nabla u^\pm$ are the limits of $\nabla u$ from the sets $\set{u > 1}$ and $\interior{\set{u \le 1}}$, respectively, and $g$ is a locally H\"{o}lder continuous function on $\Omega \times [0,\infty)$ satisfying
\begin{enumerate}
\item[$(g_1)$] $g(x,0) = 0$ for all $x \in \Omega$,
\item[$(g_2)$] for some $a_1, a_2 > 0$,
    \[
    |g(x,s)| \le \begin{cases}
    a_1\, e^{a_2 s^2} & \text{if } N = 2\\[7.5pt]
    a_1 + a_2\, s^{2^\ast - 1} & \text{if } N \ge 3
    \end{cases}
    \]
    for all $(x,s) \in \Omega \times [0,\infty)$, where $2^\ast = 2N/(N - 2)$ is the critical Sobolev exponent when $N \ge 3$.
\end{enumerate}
The right-hand side of the equation $- \Delta u = g(x,(u - 1)_+)$ is zero on the free boundary by $(g_1)$. The growth condition $(g_2)$ ensures that the associated variational functional
\[
J(u) = \int_\Omega \left[\half\, |\nabla u|^2 + \goodchi_{\set{u > 1}}(x) - G(x,(u - 1)_+)\right] dx,
\]
where $\goodchi_{\set{u > 1}}$ is the characteristic function of the set $\set{u > 1}$ and $G(x,s) = \int_0^s g(x,t)\, dt$ is the primitive of $g$, is defined on the Sobolev space $H^1_0(\Omega)$.

Since the functional $J$ is not differentiable, we approximate it by $C^1$-functionals as follows. Let $\beta : \R \to [0,2]$ be a smooth function such that $\beta(s) = 0$ for $s \le 0$, $\beta(s) > 0$ for $0 < s < 1$, $\beta(s) = 0$ for $s \ge 1$, and $\int_0^1 \beta(t)\, dt = 1$. For $\eps > 0$, let
\[
J_\eps(u) = \int_\Omega \left[\half\, |\nabla u|^2 + \B\left(\frac{u - 1}{\eps}\right) - G(x,(u - 1)_+)\right] dx, \quad u \in H^1_0(\Omega),
\]
where $\B(s) = \int_0^s \beta(t)\, dt$. First we prove a general convergence result for a sequence $\seq{u_j}$ of critical points of $J_{\eps_j}$ that is bounded in $H^1_0(\Omega) \cap L^\infty(\Omega)$, where $\eps_j \searrow 0$ (see Theorem \ref{Theorem 1}). The singular limit in this theorem is a Lipschitz continuous function $u \in H^1_0(\Omega) \cap C^2(\closure{\Omega} \setminus F(u))$ that satisfies the inequality $- \Delta u \le g(x,(u - 1)_+)$ in the distributional sense in $\Omega$ and the equation $- \Delta u = g(x,(u - 1)_+)$ in the classical sense in $\Omega \setminus F(u)$. If, in addition, $u$ is nondegenerate (see Definition \ref{Definition 2}), then by the results of Lederman and Wolanski \cite{MR2281453}, $u$ also satisfies the free boundary condition in the weak viscosity sense (see Definition \ref{Definition 4}). Next we show that the case where $u > 1$ on both sides of the free boundary can be ruled out and we can obtain a stronger form of viscosity solution if $u$ also has the positive density property for $\set{u > 1}$ and $\set{u \le 1}$ (see Definition \ref{Definition 3} and Proposition \ref{Proposition 1}). We also show that if $J_{\eps_j}(u_j) \to J(u)$, then $u$ satisfies the free boundary condition in the variational sense (see Definition \ref{Definition 1} and Proposition \ref{Proposition 2}).

Our main regularity result establishes full regularity of the free boundary in dimension $N = 2$ and partial regularity in higher dimensions for Lipschitz continuous solutions $u \in H^1_0(\Omega) \cap C^2(\closure{\Omega} \setminus F(u))$ that are nondegenerate, have the positive density property for $\set{u > 1}$ and $\set{u \le 1}$, and satisfy the free boundary condition in the viscosity sense and in the variational sense (see Theorem \ref{Theorem 4}). To apply this result to the singular limit $u$ in Theorem \ref{Theorem 1}, we have to show that $u$ is nondegenerate, has the positive density property for $\set{u > 1}$ and $\set{u \le 1}$, and $J_{\eps_j}(u_j) \to J(u)$. Then $u$ satisfies the free boundary condition in the viscosity sense by Proposition \ref{Proposition 1} and in the variational sense by Proposition \ref{Proposition 2}, so the conclusions of Theorem \ref{Theorem 4} hold for $u$. In particular, the free boundary of $u$ has finite $(N - 1)$-dimensional Hausdorff measure and is smooth except on a closed set of Hausdorff dimension at most $N - 3$. We carry out this program for a class of superlinear free boundary problems next.

We assume that the nonlinearity $g$ satisfies, in addition to $(g_1)$ and $(g_2)$,
\begin{enumerate}
\item[$(g_3)$] $g(x,s) > 0$ for all $x \in \Omega$ and $s > 0$,
\item[$(g_4)$] there exists $\mu > 2$ such that the mappings
    \[
    s \mapsto \frac{g(x,s)}{s^{\mu - 1}}, \qquad s \mapsto \frac{1}{\mu}\, s g(x,s) - G(x,s)
    \]
    are nondecreasing for all $x \in \Omega$ and $s > 0$.
\end{enumerate}
For example, a sum of powers
\[
g(x,s) = \sum_{i=1}^n s^{p_i - 1},
\]
where $p_i > 2$ if $N = 2$ and $2 < p_i \le 2^\ast$ if $N \ge 3$, satisfies $(g_1)$--$(g_4)$ with $\mu = \min p_i$. The conditions $(g_3)$ and $(g_4)$ imply that the functional $J$ has the mountain pass geometry (see Lemma \ref{Lemma 200}). Let
\[
\Gamma = \set{\gamma \in C([0,1],H^1_0(\Omega)) : \gamma(0) = 0,\, J(\gamma(1)) < 0}
\]
be the class of paths joining the origin to the set $\set{u \in H^1_0(\Omega) : J(u) < 0}$, and let
\[
c := \inf_{\gamma \in \Gamma}\, \max_{u \in \gamma([0,1])}\, J(u) > 0
\]
be the mountain pass level. We prove the existence and regularity of a nonminimizing solution of mountain pass type at this level in both critical and subcritical cases.

Let $\W = \bigset{u \in H^1_0(\Omega) : u^+ \ne 0}$, where $u^+ = (u - 1)_+$. All nontrivial solutions of problem \eqref{1001} lie on the Nehari manifold
\[
\M = \set{u \in \W : \int_\Omega |\nabla u^+|^2\, dx = \int_\Omega u^+ g(x,u^+)\, dx}.
\]
We will take a sequence $\eps_j \searrow 0$ and show that each approximating functional $J_{\eps_j}$ has a critical point $u_j$ of mountain pass type. Then we will apply Theorem \ref{Theorem 1} and show that the singular limit $u$ is in $\M$ and satisfies
\[
J(u) = c = \inf_{v \in \M}\, J(v),
\]
so $u$ is a minimizer of $\restr{J}{\M}$. We will then use this fact to show that $u$ is a mountain pass point of $J$ (see Proposition \ref{Proposition 4}) and that $u$ is nondegenerate and has the positive density property for $\set{u > 1}$ and $\set{u \le 1}$ (see Proposition \ref{Proposition 3}). This will alow us to apply Theorem \ref{Theorem 4} and establish full regularity of the free boundary in dimension $N = 2$ and partial regularity in higher dimensions for our mountain pass solution.

The subcritical pure power case $g(x,s) = s^{p-1}$, where $p > 2$ if $N = 2$ and $2 < p < 2^\ast$ if $N \ge 3$, of problem \eqref{1001} was considered in Jerison and Perera \cite{MR3790500}. However, proving the nondegeneracy and the positive density property of minimizers of $\restr{J}{\M}$ in the general case considered here is substantially more difficult. The Nehari manifold in the pure power case has the explicit description
\[
\M = \set{u^- + t_u u^+ : u \in \W},
\]
where $u^- = u - u^+$ and
\[
t_u = \left[\frac{\dint_{\set{u > 1}} |\nabla u|^2\, dx}{\dint_{\set{u > 1}} (u - 1)^p\, dx}\right]^{1/(p-2)}.
\]
The proofs in \cite{MR3790500} make extensive use of this description. No such description of the Nehari manifold is available in the general case, even in the next simplest case of a sum of two different powers. This makes the analysis of the Nehari manifold needed to obtain nondegeneracy estimates much more difficult in the general case. This analysis is carried out in Section \ref{Nondegeneracy}.

Moreover, in the critical case $p = 2^\ast$ in dimensions $N \ge 3$, which was not considered in \cite{MR3790500}, the approximating functionals $J_{\eps_j}$ do not satisfy the \PS{c} condition for all $c \in \R$ due to the noncompactness of the critical Sobolev embedding $H^1_0(\Omega) \hookrightarrow L^{2^\ast}(\Omega)$. We consider the model nonlinearity $g(x,s) = \kappa s^{2^\ast - 1} + \lambda s^{\mu - 1}$, where $\kappa, \lambda > 0$ are parameters and $2 < \mu < 2^\ast$. To overcome the difficulties arising from the lack of compactness, first we show that if
\[
0 < c < \frac{1}{N}\, \frac{S^{N/2}}{\kappa^{N/2 - 1}},
\]
where $S$ is the best constant for the Sobolev embedding, then every \PS{c} sequence of $J_{\eps_j}$ has a subsequence that converges weakly to a nontrivial critical point $u$ of $J_{\eps_j}$ satisfying $J_{\eps_j}(u) \le c$ (see Lemma \ref{Lemma 301}). Then we show that the mountain pass level of each $J_{\eps_j}$ is below this compactness threshold when $\kappa$ is sufficiently small. This allows us to establish the existence and partial regularity of a mountain pass solution for all sufficiently small $\kappa > 0$ and all $\lambda > 0$. The limiting case $\mu = 2$ of this problem was considered in Yang and Perera \cite{MR3872792}, and a nondegenerate mountain pass solution that satisfies the free boundary condition in the viscosity sense was obtained for all sufficiently small $\kappa > 0$ and $\lambda > \lambda_1$, the first Dirichlet eigenvalue of $- \Delta$ in $\Omega$. However, the question of regularity of the free boundary was not considered in \cite{MR3872792}.

\section{Statement of results}

Let $\Omega$ be a bounded domain in $\R^N,\, N \ge 2$ with $C^{2,\alpha}$-boundary $\bdry{\Omega}$. We consider the problem
\begin{equation} \label{1}
\left\{\begin{aligned}
- \Delta u & = g(x,(u - 1)_+) && \text{in } \Omega \setminus F(u)\\[10pt]
|\nabla u^+|^2 - |\nabla u^-|^2 & = 2 && \text{on } F(u)\\[10pt]
u & = 0 && \text{on } \bdry{\Omega},
\end{aligned}\right.
\end{equation}
where
\[
F(u) = \bdry{\set{u > 1}}
\]
is the free boundary of $u$, $(u - 1)_+ = \max\, (u - 1,0)$ is the positive part of $u - 1$, $\nabla u^\pm$ are the limits of $\nabla u$ from the sets $\set{u > 1}$ and $\interior{\set{u \le 1}}$, respectively, and $g$ is a locally H\"{o}lder continuous function on $\Omega \times [0,\infty)$ satisfying
\begin{enumerate}
\item[$(g_1)$] $g(x,0) = 0$ for all $x \in \Omega$,
\item[$(g_2)$] for some $a_1, a_2 > 0$,
    \[
    |g(x,s)| \le \begin{cases}
    a_1\, e^{a_2 s^2} & \text{if } N = 2\\[7.5pt]
    a_1 + a_2\, s^{2^\ast - 1} & \text{if } N \ge 3
    \end{cases}
    \]
    for all $(x,s) \in \Omega \times [0,\infty)$, where $2^\ast = 2N/(N - 2)$ is the critical Sobolev exponent when $N \ge 3$.
\end{enumerate}

The right-hand side of the equation $- \Delta u = g(x,(u - 1)_+)$ is zero on the free boundary by $(g_1)$. The growth condition $(g_2)$ ensures that the associated variational functional
\[
J(u) = \int_\Omega \left[\half\, |\nabla u|^2 + \goodchi_{\set{u > 1}}(x) - G(x,(u - 1)_+)\right] dx,
\]
where $\goodchi_{\set{u > 1}}$ is the characteristic function of the set $\set{u > 1}$ and
\[
G(x,s) = \int_0^s g(x,t)\, dt, \quad s \ge 0
\]
is the primitive of $g$, is defined on the Sobolev space $H^1_0(\Omega)$.

However, the functional $J$ is nondifferentiable, so we approximate it by \linebreak $C^1$-functionals as follows. Let $\beta : \R \to [0,2]$ be a smooth function such that $\beta(s) = 0$ for $s \le 0$, $\beta(s) > 0$ for $0 < s < 1$, $\beta(s) = 0$ for $s \ge 1$, and $\int_0^1 \beta(t)\, dt = 1$. Then set
\[
\B(s) = \int_0^s \beta(t)\, dt,
\]
and note that $\B : \R \to [0,1]$ is a smooth nondecreasing function such that $\B(s) = 0$ for $s \le 0$, $0 < \B(s) < 1$ for $0 < s < 1$, and $\B(s) = 1$ for $s \ge 1$. For $\eps > 0$, let
\[
J_\eps(u) = \int_\Omega \left[\half\, |\nabla u|^2 + \B\left(\frac{u - 1}{\eps}\right) - G(x,(u - 1)_+)\right] dx, \quad u \in H^1_0(\Omega),
\]
and note that the functional $J_\eps$ is of class $C^1$.

Critical points of $J_\eps$ coincide with weak solutions of the problem
\begin{equation} \label{13}
\left\{\begin{aligned}
- \Delta u & = - \frac{1}{\eps}\, \beta\left(\frac{u - 1}{\eps}\right) + g(x,(u - 1)_+) && \text{in } \Omega\\[10pt]
u & = 0 && \text{on } \bdry{\Omega}.
\end{aligned}\right.
\end{equation}
If $u$ is a weak solution of this problem, then it is also a classical $C^{2,\alpha}$-solution by elliptic regularity theory. If $u$ is not identically zero, then it is nontrivial in a stronger sense, namely, $u > 0$ in $\Omega$ and $u > 1$ in a nonempty open set. Indeed, if $u \le 1$ everywhere, then $u$ is harmonic in $\Omega$ by $(g_1)$ and hence vanishes identically since $u = 0$ on $\bdry{\Omega}$. Furthermore, in the set $\set{u < 1}$, $u$ is the harmonic function with boundary values $0$ on $\bdry{\Omega}$ and $1$ on $\bdry{\set{u \ge 1}}$, and hence strictly positive since $\Omega$ is connected.

Our main convergence result as $\eps \searrow 0$ is the following theorem.

\begin{theorem} \label{Theorem 1}
Assume $(g_1)$ and $(g_2)$. Let $\eps_j \searrow 0$ and let $u_j$ be a critical point of $J_{\eps_j}$. If $\seq{u_j}$ is bounded in $H^1_0(\Omega) \cap L^\infty(\Omega)$, then there exists a Lipschitz continuous function $u$ on $\closure{\Omega}$ such that $u \in H^1_0(\Omega) \cap C^2(\closure{\Omega} \setminus F(u))$ and, for a renamed subsequence,
\begin{enumroman}
\item \label{Theorem 1.i} $u_j \to u$ uniformly on $\closure{\Omega}$,
\item \label{Theorem 1.ii} $u_j \to u$ strongly in $H^1_0(\Omega)$,
\item \label{Theorem 1.iii} $J(u) \le \varliminf J_{\eps_j}(u_j) \le \varlimsup J_{\eps_j}(u_j) \le J(u) + \vol{{\set{u = 1}}}$, where $\vol{}$ denotes the Lebesgue measure in $\R^N$, in particular, $u$ is nontrivial if $\varliminf J_{\eps_j}(u_j) < 0$ or $\varlimsup J_{\eps_j}(u_j) > 0$.
\end{enumroman}
Moreover, $u$ satisfies the inequality $- \Delta u \le g(x,(u - 1)_+)$ in the distributional sense in $\Omega$ and the equation $- \Delta u = g(x,(u - 1)_+)$ in the classical sense in $\Omega \setminus F(u)$.
\end{theorem}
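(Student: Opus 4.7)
The proof hinges on a single technical ingredient---a uniform Lipschitz estimate for $\seq{u_j}$ on $\closure{\Omega}$---after which the remaining conclusions follow by relatively standard arguments. This Lipschitz bound is the \emph{main obstacle}: equation \eqref{13} carries the singular forcing term $-\eps_j^{-1}\, \beta((u_j-1)/\eps_j)$, which concentrates on the thin strip $\set{1 < u_j < 1+\eps_j}$ and is a priori unbounded as $\eps_j \searrow 0$. My plan is to exploit the one-sided inequality $-\Delta u_j \le g(x,(u_j-1)_+)$ (from $\beta \ge 0$) that makes each $u_j$ a bounded subsolution, combine this with the uniform $L^\infty$ bound and a blow-up/compactness argument near the approximate free boundary in the spirit of Alt--Caffarelli and Lederman--Wola\'nski, and invoke the $C^{2,\alpha}$ smoothness of $\bdry{\Omega}$ together with a standard barrier to extend the estimate up to $\bdry{\Omega}$. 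In particular, this will force $\set{u_j > 1} \subset\subset \Omega$ uniformly in $j$.

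Once this estimate is in hand, Arzel\`a--Ascoli yields a subsequence $u_j \to u$ uniformly on $\closure{\Omega}$ with $u$ Lipschitz and $u = 0$ on $\bdry{\Omega}$, giving \ref{Theorem 1.i}; the bound on $\norm[H^1_0]{u_j}$ identifies $u$ as the weak $H^1_0$-limit. If $x_0 \notin F(u)$, then $u(x_0) \ne 1$, and uniform convergence with $\eps_j \to 0$ provides a neighborhood $V$ of $x_0$ on which, for $j$ large, either $u_j \le 1 - \delta$ or $u_j \ge 1 + \eps_j$ throughout $V$; in either case $\beta((u_j-1)/\eps_j) \equiv 0$ on $V$ and $-\Delta u_j = g(x,(u_j-1)_+)$ holds classically there. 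Interior and boundary Schauder estimates then promote the convergence to $C^2_{\rm loc}(\closure{\Omega} \setminus F(u))$, giving $u \in C^2(\closure{\Omega} \setminus F(u))$ and the classical equation $-\Delta u = g(x,(u-1)_+)$ on $\Omega \setminus F(u)$. The global distributional inequality is obtained by passing to the limit in $-\Delta u_j \le g(x,(u_j-1)_+)$ against nonnegative test functions, using weak convergence on the left and dominated convergence on the right (justified by $(g_2)$ and the uniform $L^\infty$ bound).

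For the strong convergence \ref{Theorem 1.ii}, I would test \eqref{13} with $u_j - u \in H^1_0(\Omega)$. The singular contribution is bounded by $\norm[\infty]{u_j - u} \int \eps_j^{-1}\, \beta((u_j-1)/\eps_j)\, dx$, and the second factor is uniformly bounded (as seen by testing \eqref{13} against a cutoff that equals $1$ on a neighborhood of $\set{u_j \ge 1 - \delta}$ and using the uniform Lipschitz bound to control the resulting flux of $\nabla u_j$), so the product vanishes in the limit. The $g$-term vanishes by dominated convergence. Hence $\int \nabla u_j \cdot \nabla (u_j - u)\, dx \to 0$, and combined with weak convergence this gives $\norm[L^2]{\nabla u_j} \to \norm[L^2]{\nabla u}$ and thus strong $H^1_0$-convergence.

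Finally for \ref{Theorem 1.iii}, the pointwise asymptotics $\B((u_j - 1)/\eps_j) \to 1$ on $\set{u > 1}$ and $\B((u_j - 1)/\eps_j) = 0$ eventually on any compact subset of $\set{u < 1}$, together with $0 \le \B \le 1$, give via Fatou
\[
\liminf \int_\Omega \B\!\left(\frac{u_j - 1}{\eps_j}\right) dx \ge \vol{\set{u > 1}}
\]
and via dominated convergence on $\set{u < 1}$ together with the crude bound $\B \le 1$ on $\set{u \ge 1}$,
\[
\limsup \int_\Omega \B\!\left(\frac{u_j - 1}{\eps_j}\right) dx \le \vol{\set{u \ge 1}} = \vol{\set{u > 1}} + \vol{\set{u = 1}}.
\]
Combining these with the gradient convergence from \ref{Theorem 1.ii} and dominated convergence for $G$ (again using $(g_2)$ and the $L^\infty$ bound) yields the two-sided bound on $J_{\eps_j}(u_j)$. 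Nontriviality follows since $u \equiv 0$ would force $J(u) = 0 = \vol{\set{u = 1}}$, contradicting either $\varliminf J_{\eps_j}(u_j) < 0$ or $\varlimsup J_{\eps_j}(u_j) > 0$.
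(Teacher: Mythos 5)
Your proposal follows the paper's broad outline, but there is a genuine gap in the passage from ``$x_0 \notin F(u)$'' to ``$u(x_0) \ne 1$''. Since $F(u) = \bdry{\set{u > 1}}$, the complement $\Omega \setminus F(u)$ decomposes as $\set{u > 1} \cup \interior{\set{u \le 1}}$, and nothing a priori prevents the level set $\set{u = 1}$ from having nonempty interior (indeed the statement of \ref{Theorem 1.iii} explicitly allows $\vol{\set{u = 1}} > 0$). At a point $x_0 \in \interior{\set{u \le 1}}$ with $u(x_0) = 1$, you cannot find a neighborhood on which $u_j \le 1 - \delta$ or $u_j \ge 1 + \eps_j$, so $\beta((u_j-1)/\eps_j)$ need not vanish there, and your argument does not establish the equation. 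The paper handles this by a separate step: it first shows $\Delta u = 0$ in $\set{u < 1}$, observes that $\min(u,1)$ therefore satisfies the super-mean value property and hence $-\Delta \min(u,1) \ge 0$ distributionally (citing Alt--Caffarelli, Remark 4.2), and combines this with the global distributional inequality $-\Delta u \le g(x,(u-1)_+)$ and $(g_1)$ to conclude $\Delta u = 0$ in the \emph{full} interior of $\set{u \le 1}$. You need this (or an equivalent device) to close the argument and justify $u \in C^2(\closure{\Omega} \setminus F(u))$.

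Two smaller remarks. First, your uniform Lipschitz bound is the linchpin of the whole proof, and leaving it at ``a blow-up/compactness argument in the spirit of Alt--Caffarelli and Lederman--Wola\'nski'' undersells the difficulty; the paper instead verifies the hypotheses of a specific quantitative result (the Caffarelli--Jerison--Kenig estimate, quoted as Lemma \ref{Lemma 6}) using $0 \le \beta \le 2\goodchi_{(-1,1)}$ to write $\pm\Delta u_j \le \frac{2}{\eps_j}\goodchi_{\set{|u_j - 1| < \eps_j}} + A_0$, which is the clean way to make this rigorous. Second, your argument for the strong $H^1_0$-convergence \ref{Theorem 1.ii} is a genuinely different and arguably slicker route than the paper's: you test the equation with $u_j - u$ and control the singular term by proving $\int_\Omega \eps_j^{-1}\beta((u_j-1)/\eps_j)\,dx$ is uniformly bounded (testing with a cutoff that is $1$ on a uniform neighborhood of $\set{u_j \ge 1}$, using the barrier-induced compact containment and the Lipschitz bound on $\nabla u_j$); the paper instead tests with $u_j - 1$, exploits $\beta((t-1)/\eps_j)(t-1) \ge 0$, and then separately computes $\int_\Omega |\nabla u|^2\,dx$ by splitting into $\set{u > 1 + \eps}$, $\set{u < 1 - \eps}$ and letting $\eps \searrow 0$. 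Your version avoids that delicate limiting identity and is a valid simplification.
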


Theorem \ref{Theorem 1} will be proved in Section \ref{Regularization}. The following nondegeneracy estimate is needed to establish more detailed properties of the free boundary of the singular limit $u$ in this theorem.

\begin{definition} \label{Definition 2}
We say that a function $u \in C(\closure{\Omega})$ is nondegenerate if there exist constants $r_0, c > 0$ such that whenever $x_0 \in \set{u > 1}$ and $r := \dist{x_0}{\set{u \le 1}} \le r_0$,
\[
u(x_0) \ge 1 + cr.
\]
\end{definition}

If the limit $u$ is nondegenerate, then it is a weak viscosity solution in the following sense by the results of Lederman and Wolanski in \cite{MR2281453}.

\begin{definition} \label{Definition 4}
We say that a function $u \in C(\closure{\Omega})$ satisfies the free boundary condition
\[
|\nabla u^+|^2 - |\nabla u^-|^2 = 2
\]
in the weak viscosity sense if whenever there is a ball $B \subset \set{u > 1}$ tangent to $F(u)$ at a point $x_0$, either there are $\alpha > 0$ and $\beta > 0$ such that $\alpha^2 \le 2$, $\beta^2 \le 2$, and
\[
u(x) = 1 + \alpha\, {\ipa{x - x_0}{\nu}}_+ + \beta\, {\ipa{x - x_0}{\nu}}_- + \o(|x - x_0|) \quad \text{as } x \to x_0
\]
with $\nu$ the interior normal to $\bdry{B}$ at $x_0$, or else there are $\alpha > 0$ and $\beta \ge 0$ such that $\alpha^2 - \beta^2 = 2$ and
\[
u(x) = 1 + \alpha\, {\ipa{x - x_0}{\nu}}_+ - \beta\, {\ipa{x - x_0}{\nu}}_- + \o(|x - x_0|) \quad \text{as } x \to x_0.
\]
If the ball $B \subset \interior{\set{u \le 1}}$, then the second asymptotic formula holds with $\alpha$ and $\beta$ as above, but with $\nu$ the exterior normal to $\bdry{B}$ at $x_0$.
\end{definition}

The case where $u > 1$ on both sides of the free boundary can be ruled out and we can obtain a stronger form of viscosity solution if $u$ has the following positive density property.

\begin{definition} \label{Definition 3}
We say that a function $u \in C(\closure{\Omega})$ has the positive density property for $\set{u > 1}$ and $\set{u \le 1}$ if there exist constants $r_0, c > 0$ such that whenever $x_0 \in F(u)$ and $0 < r \le r_0$,
\[
c \le \frac{\vol{{\set{u > 1}} \cap B_r(x_0)}}{\vol{B_r(x_0)}} \le 1 - c.
\]
\end{definition}

The lower bound by $c$ in this definition follows from the nondegeneracy of Definition \ref{Definition 2}. The upper bound by $1 - c$ is a complementary nondegeneracy for the region $\set{u \le 1}$. We will prove the following proposition in the next section.

\begin{proposition} \label{Proposition 1}
If the limit $u$ in Theorem \ref{Theorem 1} is nondegenerate and has the positive density property for $\set{u > 1}$ and $\set{u \le 1}$, then it satisfies the free boundary condition in the viscosity sense, i.e., whenever there is a ball $B$ tangent to $F(u)$ at a point $x_0$, $u$ has an asymptotic expansion of the form
\[
u(x) = 1 + \alpha\, {\ipa{x - x_0}{\nu}}_+ - \beta\, {\ipa{x - x_0}{\nu}}_- + \o(|x - x_0|) \quad \text{as } x \to x_0
\]
with $\alpha > 0$, $\beta \ge 0$, and $\alpha^2 - \beta^2 = 2$, where $\nu$ is the interior unit normal to $\bdry{B}$ at $x_0$ if $B \subset \set{u > 1}$ and the exterior unit normal if $B \subset \interior{\set{u \le 1}}$.
\end{proposition}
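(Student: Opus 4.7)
The plan is to use nondegeneracy to import the viscosity dichotomy of Definition~\ref{Definition 4} from the Lederman--Wolanski theory in \cite{MR2281453}, and then to use the positive density property to rule out the first (two-phase) alternative there, leaving precisely the expansion claimed in Proposition~\ref{Proposition 1}.

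By nondegeneracy and \cite{MR2281453}, whenever a ball $B$ is tangent to $F(u)$ at $x_0$ one of the two asymptotic expansions of Definition~\ref{Definition 4} holds. If $B \subset \interior{\set{u \le 1}}$, Definition~\ref{Definition 4} already furnishes an expansion of the required form with exterior normal $\nu$ and $\alpha^2 - \beta^2 = 2$, so there is nothing to do. The substance of the proof lies in the case $B \subset \set{u > 1}$, where I must exclude the first alternative with interior normal $\nu$ and $\alpha, \beta \in (0, \sqrt{2}\,]$. Suppose toward a contradiction that this first alternative holds. Given $\delta > 0$, choose $r_\delta > 0$ so that the error in the asymptotic expansion is at most $\delta\, |x - x_0|$ on $B_{r_\delta}(x_0)$. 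For any $x \in B_r(x_0)$ with $r \le r_\delta$ and $u(x) \le 1$, both nonnegative leading terms $\alpha\, {\ipa{x - x_0}{\nu}}_+$ and $\beta\, {\ipa{x - x_0}{\nu}}_-$ are then at most $\delta\, r$, so $|{\ipa{x - x_0}{\nu}}| \le \delta\, r / \min(\alpha, \beta)$. Hence $\set{u \le 1} \cap B_r(x_0)$ lies in a slab about the tangent plane to $B$ at $x_0$ of thickness $\O(\delta)\, r$, whose Lebesgue measure is $\O(\delta)\, r^N$. Choosing $\delta$ small in terms of $c$, $\alpha$, $\beta$, and $N$, this contradicts the lower bound $\vol{\set{u \le 1} \cap B_r(x_0)} \ge c\, \vol{B_r(x_0)}$ supplied by the positive density property, for every sufficiently small $r$. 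The first alternative is therefore impossible and the second yields the expansion required by Proposition~\ref{Proposition 1}.

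The only non-mechanical step is this slab estimate; it works because the constants $\alpha, \beta$ are fixed at the point $x_0$ and the little-$\o$ remainder in Definition~\ref{Definition 4} is uniform in $x$ as $x \to x_0$. I anticipate no serious obstacle beyond correctly reading off the dichotomy in Definition~\ref{Definition 4} and observing that the configuration in which $u$ exceeds $1$ on both sides of the free boundary is geometrically incompatible with having a positive density of $\set{u \le 1}$ near a free boundary point.
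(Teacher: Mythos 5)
Your proposal is correct and follows the same route as the paper: invoke the Lederman--Wolanski results (Corollaries 7.1 and 7.2 of \cite{MR2281453}) via nondegeneracy to obtain the weak viscosity dichotomy of Definition~\ref{Definition 4}, then use the positive density property to exclude the two-plus-phase alternative. Your explicit slab estimate spells out what the paper compresses into the single sentence that $\vol{\set{u \le 1} \cap B_r(x_0)} \ge c r^N$ rules out that case.
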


We also need $u$ to be a variational solution in the following sense in order to establish regularity of the free boundary.

\begin{definition} \label{Definition 1}
We say that a function $u \in H^1_0(\Omega)$ satisfies the free boundary condition in the variational sense if
\begin{equation} \label{3.15}
\int_\Omega \left[\left(\half\, |\nabla u|^2 + \goodchi_{\set{u > 1}}(x) - G(x,(u - 1)_+)\right) \divg \Phi - \nabla u\, (D\Phi) \cdot \nabla u\right] dx = 0
\end{equation}
for all $\Phi \in C^1_0(\Omega,\R^N)$.
\end{definition}

The corresponding regularized equation
\begin{equation} \label{15}
\int_\Omega \left[\left(\half\, |\nabla u|^2 + \B\left(\frac{u - 1}{\eps}\right) - G(x,(u - 1)_+)\right) \divg \Phi - \nabla u\, (D\Phi) \cdot \nabla u\right] dx = 0
\end{equation}
is the critical point equation for $J_\eps$ with respect to domain variations. Indeed, the mapping $x \mapsto x - t\, \Phi(x)$ is a diffeomorphism of $\Omega$ for sufficiently small $t$, and the left-hand side of the equation \eqref{15} is
\[
\restr{\frac{d}{dt}}{t=0} J_\eps(u(x - t\, \Phi(x))).
\]
In the next section we will prove the following proposition showing that the convergence of the critical values is sufficient for the singular limit to be a variational solution.

\begin{proposition} \label{Proposition 2}
If $J_{\eps_j}(u_j) \to J(u)$ in Theorem \ref{Theorem 1}, then $u$ satisfies the free boundary condition in the variational sense.
\end{proposition}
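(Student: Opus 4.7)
The plan is to pass to the limit in the regularized domain-variation identity \eqref{15}, which holds for every critical point $u_j$ of $J_{\eps_j}$. All terms except the one involving $\B\bigl((u_j-1)/\eps_j\bigr)$ pass without trouble using the conclusions of Theorem~\ref{Theorem 1}: strong $H^1_0$-convergence gives $|\nabla u_j|^2\to|\nabla u|^2$ in $L^1$ and $\nabla u_j\otimes\nabla u_j \to \nabla u\otimes\nabla u$ in $L^1$, which handles the $|\nabla u|^2\divg\Phi$ term and the quadratic $\nabla u\,(D\Phi)\cdot\nabla u$ term; uniform convergence $u_j\to u$ on $\closure{\Omega}$ together with the $L^\infty$-bound on $\seq{u_j}$ makes $G(x,(u_j-1)_+)\divg\Phi\to G(x,(u-1)_+)\divg\Phi$ uniformly.

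The main obstacle is the term $\int_\Omega \B\bigl((u_j-1)/\eps_j\bigr)\divg\Phi\,dx$, since the integrand is pinned between $0$ and $1$ and its pointwise behavior is controlled only off the level set $\set{u=1}$, whose measure need not vanish. Here is where the extra hypothesis $J_{\eps_j}(u_j)\to J(u)$ enters. First I would observe that for each $x$ with $u(x)>1$ the uniform convergence and $\eps_j\searrow 0$ force $u_j(x)>1+\eps_j$ for all large $j$, hence $\B\bigl((u_j-1)/\eps_j\bigr)(x)=1$; similarly this quantity equals $0$ eventually on $\set{u<1}$. Dominated convergence therefore yields
\[
\int_{\set{u>1}}\B\!\left(\tfrac{u_j-1}{\eps_j}\right)\!dx\to\vol{{\set{u>1}}},\qquad \int_{\set{u<1}}\B\!\left(\tfrac{u_j-1}{\eps_j}\right)\!dx\to 0.
\]

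Combining the convergence of the quadratic gradient term and the $G$-term with the assumption $J_{\eps_j}(u_j)\to J(u)=\int\tfrac12|\nabla u|^2+\vol{{\set{u>1}}}-\int G(x,(u-1)_+)$, I would read off that
\[
\int_\Omega\B\!\left(\tfrac{u_j-1}{\eps_j}\right)dx\;\longrightarrow\;\vol{{\set{u>1}}},
\]
and hence, subtracting the two display above, $\int_{\set{u=1}}\B\bigl((u_j-1)/\eps_j\bigr)dx\to 0$. This is the precise quantitative statement that says no mass of $\B\bigl((u_j-1)/\eps_j\bigr)$ is concentrated on the level set $\set{u=1}$; without the hypothesis $J_{\eps_j}(u_j)\to J(u)$, this could genuinely fail.

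With this in hand, applying the same decomposition to the integrand weighted by $\divg\Phi\in L^\infty$ gives
\[
\int_\Omega \B\!\left(\tfrac{u_j-1}{\eps_j}\right)\divg\Phi\,dx \;\longrightarrow\; \int_\Omega \goodchi_{\set{u>1}}\,\divg\Phi\,dx,
\]
since on $\set{u=1}$ one bounds the integrand by $\|\divg\Phi\|_\infty\,\B\bigl((u_j-1)/\eps_j\bigr)$. Passing to the limit in \eqref{15} now produces exactly \eqref{3.15} for $u$, establishing that $u$ is a variational solution in the sense of Definition~\ref{Definition 1}.
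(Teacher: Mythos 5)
Your proof is correct and follows essentially the same route as the paper's. Both pass to the limit in the regularized domain-variation identity \eqref{15} (the paper's Lemma \ref{Lemma 3}), handle the gradient and $G$ terms via strong $H^1_0$ and uniform convergence, and isolate the $\B$-term as the crux; and both use the hypothesis $J_{\eps_j}(u_j)\to J(u)$ to deduce $\int_\Omega \B\bigl((u_j-1)/\eps_j\bigr)\,dx\to\vol{{\set{u>1}}}$ and then combine this with the pointwise limit of $\B\bigl((u_j-1)/\eps_j\bigr)$ on $\set{u>1}$ to conclude that no mass survives on $\set{u\le 1}$ (you phrase it via $\set{u=1}$ and $\set{u<1}$ separately, the paper lumps them together, but this is only cosmetic), after which dominated convergence finishes the $\B\divg\Phi$ term.
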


Our main regularity result is the following theorem establishing full regularity of the free boundary in dimension $N = 2$ and partial regularity in higher dimensions.

\begin{theorem} \label{Theorem 4}
Assume $(g_1)$ and $(g_2)$. Let $u \in H^1_0(\Omega) \cap C^2(\closure{\Omega} \setminus F(u))$ be a nondegenerate and Lipschitz continuous solution of the equation $- \Delta u = g(x,(u - 1)_+)$ in $\Omega \setminus F(u)$. Assume further that $u$ has the positive density property for $\set{u > 1}$ and $\set{u \le 1}$, and satisfies the free boundary condition in the viscosity sense and in the variational sense. Then its free boundary $F(u)$ has finite $(N - 1)$-dimensional Hausdorff measure and is a $C^\infty$-hypersurface except on a closed set of Hausdorff dimension at most $N - 3$. In particular, $F(u)$ is smooth in dimension $N = 2$ and has at most finitely many nonsmooth points in dimension $N = 3$. Near the smooth subset of $F(u)$, $(u - 1)_\pm$ are smooth and the free boundary condition is satisfied in the classical sense.
\end{theorem}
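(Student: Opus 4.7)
The plan is to assemble classical free-boundary regularity techniques—Alt-Caffarelli for the measure bound, Caffarelli/Lederman-Wolanski for viscosity regularity, and Weiss-type monotonicity plus Federer dimension reduction for the singular set—adapted to the present nonminimizing setting by treating $G(x,(u-1)_+)$ as a controllable perturbation that vanishes on $F(u)$. The argument splits into (a) finite $(N-1)$-Hausdorff measure, (b) $C^\infty$ regularity at flat free-boundary points together with the smoothness of $(u-1)_\pm$, (c) a Weiss monotonicity formula and dimension reduction for the singular set, and (d) the specialization to $N = 2, 3$.

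For the Hausdorff measure bound, test \eqref{3.15} with $\Phi(x) = \varphi(x)\, e_i$, $\varphi \in C^1_0(\Omega)$, to obtain a divergence identity for the matrix field $\bigl(\tfrac{1}{2} |\nabla u|^2 + \goodchi_{\set{u>1}} - G(x,(u-1)_+)\bigr) I - \nabla u \otimes \nabla u$. Since $|\nabla u|^2$ and $G$ are locally integrable and $\goodchi_{\set{u>1}}$ has a distributional gradient $\mu$ supported on $F(u)$, the identity shows $\mu$ is a locally finite Radon measure. The Lipschitz bound on $u$ and the upper bound in Definition~\ref{Definition 3} give $\mu(B_r(x_0)) \le C r^{N-1}$ for $x_0 \in F(u)$ and small $r$; the positive density lower bound, combined with the standard covering argument, then yields $\mathcal{H}^{N-1}(F(u) \cap K) \le C(K) < \infty$ on compact $K \subset \Omega$.

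For the regularity at a flat free-boundary point $x_0$, consider blow-ups $u_r(x) = r^{-1}[u(x_0 + rx) - 1]$; by Lipschitz continuity and nondegeneracy a subsequence converges locally uniformly to a nontrivial $u_0$. Whenever $u_0$ is a two-plane solution $\alpha\, \ipa{x}{\nu}_+ - \beta\, \ipa{x}{\nu}_-$ with $\alpha^2 - \beta^2 = 2$, $F(u)$ is flat at $x_0$ on scale $r$, and the viscosity condition from Proposition~\ref{Proposition 1}, combined with Caffarelli's flatness-implies-$C^{1,\alpha}$ theorem as extended by Lederman-Wolanski to bounded inhomogeneous right-hand sides (which here vanishes near $F(u)$ because $(u - 1)_+ \to 0$), gives that $F(u)$ is $C^{1,\alpha}$ in a neighborhood. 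A partial hodograph-Legendre transform straightens $F(u)$ and converts each of $u$ on $\set{u > 1}$ and on $\interior{\set{u \le 1}}$ into a solution of a smooth Dirichlet problem with smooth data; a Schauder bootstrap then upgrades $F(u)$ to $C^\infty$, and $(u-1)_\pm$ extend smoothly, so that $|\nabla u^+|^2 - |\nabla u^-|^2 = 2$ holds pointwise along the regular part.

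The singular set is controlled by a Weiss monotonicity formula, which relies only on the variational identity and hence is available here by Proposition~\ref{Proposition 2}. Writing
\[
W(r; x_0) = \frac{1}{r^N} \int_{B_r(x_0)} \Bigl[\tfrac{1}{2} |\nabla u|^2 + \goodchi_{\set{u>1}}\Bigr] dx - \frac{1}{r^{N+1}} \int_{\bdry{B_r(x_0)}} (u - 1)^2 \, d\mathcal{H}^{N-1},
\]
the domain-variation identity implies that $W(\cdot; x_0)$ is monotone nondecreasing up to an error of the form $\int_{B_r} G = \O(r^{N+1})$, which comes from the $G$-term and is harmless because $(u-1)_+$ is Lipschitz and vanishes on $F(u)$. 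Monotonicity forces every blow-up $u_0$ to be $1$-homogeneous, and the viscosity condition identifies the regular points as those where some blow-up is a two-plane solution. A Federer-type dimension reduction on this class of homogeneous absolutely-minimizing-like profiles gives $\dim_\mathcal{H} \Sigma \le N - 3$, where $\Sigma \subset F(u)$ is the singular set; this is the main technical obstacle, since one must verify that the dimension reduction proceeds without the full minimizing property, using only the variational and viscosity conditions. Finally, in $N = 2$ we have $N - 3 < 0$, so $\Sigma = \varnothing$ and $F(u)$ is globally smooth; in $N = 3$, $\dim_\mathcal{H} \Sigma \le 0$ and the uniform density bound at singular points forces $\Sigma$ to be discrete and hence, being closed in the bounded domain $\Omega$, finite.
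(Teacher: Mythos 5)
Your proposal matches the approach the paper itself endorses: the paper does not give a self-contained proof of Theorem~\ref{Theorem 4}, but rather cites the machinery of Alt--Caffarelli, Caffarelli, Lederman--Wolanski, Weiss, Jerison--Kamburov, and, for the inhomogeneous case, the detailed treatment in Jerison--Perera~\cite{MR3790500}, asserting that the general case introduces no substantial new difficulties. Your outline---divergence-form identity from the variational condition for the $\mathcal{H}^{N-1}$ bound, viscosity flatness plus hodograph--Legendre and Schauder bootstrap for $C^\infty$ regularity at flat points, Weiss monotonicity with a controllable $\O(r^{N+1})$ perturbation from the $G$-term, and Federer dimension reduction for the singular set---is precisely this program, and you correctly flag the genuine technical sticking point (running the dimension reduction off the variational and viscosity conditions alone, without global minimality), which is the content of~\cite{MR3790500} and~\cite{MR1759450}.
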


This theorem is proved by the methods of Caffarelli \cite{MR861482,MR990856,MR1029856,MR973745}, Caffarelli and Salsa \cite{MR2145284}, Lederman and Wolanski \cite{MR2281453}, Jerison and Kamburov \cite{MR3567263}, and Weiss \cite{MR1759450}, who studied the case where $u$ is harmonic in $\Omega \setminus \bdry{\set{u > 0}}$. The details of the case where $u$ solves the inhomogeneous equation $- \Delta u = (u - 1)_+^{p-1}$ in $\Omega \setminus F(u)$, where $p > 2$ if $N = 2$ and $2 < p < 2^\ast$ if $N \ge 3$, are given in Jerison and Perera \cite{MR3790500}. There are no substantial differences between the proof in that case and the proof in the general case considered here.

To apply Theorem \ref{Theorem 4} to the singular limit $u$ in Theorem \ref{Theorem 1}, we have to show that $u$ is nondegenerate, has the positive density property, and that $J_{\eps_j}(u_j) \to J(u)$. Then $u$ satisfies the free boundary condition in the viscosity sense by Proposition \ref{Proposition 1} and in the variational sense by Proposition \ref{Proposition 2}, so the conclusions of Theorem \ref{Theorem 4} hold for $u$. In particular, the free boundary of $u$ has finite $(N - 1)$-dimensional Hausdorff measure and is smooth except on a closed set of Hausdorff dimension at most $N - 3$. We carry out this program for a class of superlinear free boundary problems next.

We assume that the nonlinearity $g$ satisfies, in addition to the conditions $(g_1)$ and $(g_2)$,
\begin{enumerate}
\item[$(g_3)$] $g(x,s) > 0$ for all $x \in \Omega$ and $s > 0$,
\item[$(g_4)$] there exists $\mu > 2$ such that the mappings
    \[
    s \mapsto \frac{g(x,s)}{s^{\mu - 1}}, \qquad s \mapsto \frac{1}{\mu}\, s g(x,s) - G(x,s)
    \]
    are nondecreasing for all $x \in \Omega$ and $s > 0$.
\end{enumerate}

Some examples of nonlinearities that satisfy the conditions $(g_1)$--$(g_4)$ are as follows:
\begin{enumarab}
\item For the subcritical pure power case $g(x,s) = s^{p-1}$, where $p > 2$ if $N = 2$ and $2 < p < 2^\ast$ if $N \ge 3$, considered in Jerison and Perera \cite{MR3790500}, $(g_1)$--$(g_4)$ hold with $\mu = p$.
\item A sum of powers
    \[
    g(x,s) = \sum_{i=1}^n s^{p_i - 1},
    \]
    where $p_i > 2$ if $N = 2$ and $2 < p_i \le 2^\ast$ if $N \ge 3$, satisfies $(g_1)$--$(g_4)$ with $\mu = \min p_i$.
\item Let $\mu > 2$ if $N = 2$ and $2 < \mu \le 2^\ast$ if $N \ge 3$, and let $a$ be a $C^1$-function on $\Omega \times [0,\infty)$ satisfying
    \begin{enumerate}
    \item[$(i)$] $a(x,s) > 0$ for all $x \in \Omega$ and $s > 0$,
    \item[$(ii)$] for some $a_3, a_4 > 0$,
        \[
        a(x,s) \le \begin{cases}
        a_3\, e^{a_4 s^2} & \text{if } N = 2\\[7.5pt]
        a_3 + a_4\, s^{2^\ast - \mu} & \text{if } N \ge 3
        \end{cases}
        \]
        for all $(x,s) \in \Omega \times [0,\infty)$,
    \item[$(iii)$] the mapping $s \mapsto a(x,s)$ is nondecreasing for all $(x,s) \in \Omega \times [0,\infty)$.
    \end{enumerate}
    Then $g(x,s) = a(x,s)\, s^{\mu - 1}$ satisfies $(g_1)$--$(g_4)$.
\end{enumarab}

The conditions $(g_3)$ and $(g_4)$ imply the well-known Ambrosetti-Rabinowitz superlinearity condition
\begin{equation} \label{103}
0 < \mu\, G(x,s) \le s g(x,s) \quad \forall (x,s) \in \Omega \times (0,\infty).
\end{equation}
This together with $(g_2)$ implies
\begin{equation} \label{1031}
G(x,s) \ge G(x,1)\, s^\mu - C \quad \forall (x,s) \in \Omega \times [0,\infty)
\end{equation}
for some constant $C > 0$, so $G$ grows superquadratically. The condition $(g_4)$ also implies that
\begin{equation} \label{104}
g(x,ts) \le t^{\mu - 1} g(x,s) \quad \text{if } t \in [0,1]
\end{equation}
and
\begin{equation} \label{105}
g(x,ts) \ge t^{\mu - 1} g(x,s) \quad \text{if } t \in [1,\infty)
\end{equation}
for all $(x,s) \in \Omega \times [0,\infty)$. When $g(x,\cdot) \in C^1([0,\infty))$ for all $x \in \Omega$, $(g_4)$ is equivalent to
\[
s\, \frac{\partial g}{\partial s}(x,s) \ge (\mu - 1)\, g(x,s) \quad \forall (x,s) \in \Omega \times [0,\infty).
\]

If $u$ is a solution of problem \eqref{1}, then $u$ is harmonic in $\Omega \setminus \closure{\set{u > 1}}$ by $(g_1)$, so $u$ is either positive everywhere or vanishes identically by the maximum principle. If $u \le 1$ everywhere, then $u$ is harmonic in $\Omega$ and hence vanishes identically again. So if $u$ is a nontrivial solution, then $u > 0$ in $\Omega$ and $u > 1$ in a nonempty open subset of $\Omega$, where it satisfies the equation $- \Delta u = g(x,u - 1)$. Multiplying this equation by $u - 1$ and integrating over the set $\set{u > 1}$ shows that $u$ satisfies the integral identity
\[
\int_\Omega |\nabla u^+|^2\, dx = \int_\Omega u^+ g(x,u^+)\, dx,
\]
where $u^+ = (u - 1)_+$. Thus, setting $\W = \bigset{u \in H^1_0(\Omega) : u^+ \ne 0}$, all nontrivial solutions of problem \eqref{1} lie on the set
\[
\M = \set{u \in \W : \int_\Omega |\nabla u^+|^2\, dx = \int_\Omega u^+ g(x,u^+)\, dx}.
\]
This set will play an important role in our study of superlinear free boundary problems. By $(g_3)$ and $(g_4)$,
\[
J(u) > \int_\Omega \left[\half\, |\nabla u^+|^2 - G(x,u^+)\right] dx > \int_\Omega \left[\frac{1}{\mu}\, u^+ g(x,u^+) - G(x,u^+)\right] dx \ge 0
\]
for all $u \in \M$.

We will see that \eqref{1031} and \eqref{104} imply that the functional $J$ has the mountain pass geometry. Let
\[
\Gamma = \set{\gamma \in C([0,1],H^1_0(\Omega)) : \gamma(0) = 0,\, J(\gamma(1)) < 0}
\]
be the class of paths joining the origin to the set $\set{u \in H^1_0(\Omega) : J(u) < 0}$, and let
\begin{equation} \label{301}
c := \inf_{\gamma \in \Gamma}\, \max_{u \in \gamma([0,1])}\, J(u) > 0
\end{equation}
be the mountain pass level. Our goal is to prove the existence and regularity of a nonminimizing solution of mountain pass type at this level. We recall that $u \in H^1_0(\Omega)$ is a mountain pass point of $J$ if the set $\set{v \in V : J(v) < J(u)}$ is neither empty nor path connected for any neighborhood $V$ of $u$ (see Hofer \cite{MR812787}). The following proposition, proved exactly as in Jerison and Perera \cite[Proposition 4.2]{MR3790500}, shows that if $c = \inf_\M J$ and $u$ is a minimizer of $\restr{J}{\M}$, then $u$ is a mountain pass point.

\begin{proposition} \label{Proposition 4}
We have
\[
c \le \inf_{v \in \M}\, J(v).
\]
If $u \in \M$ and $J(u) = c$, then $u$ is a mountain pass point of $J$.
\end{proposition}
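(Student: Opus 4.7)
For the inequality $c \le \inf_\M J$, I fix $v \in \M$ and construct an admissible path $\gamma_v \in \Gamma$ on which $J$ attains its maximum at $v$, following the blueprint of Jerison and Perera~\cite{MR3790500} adapted to the present nonlinearity via $(g_3)$ and $(g_4)$. Define the deformation $v_t$ by $v_t = v$ on $\set{v \le 1}$ and $v_t = 1 + t(v-1)$ on $\set{v > 1}$. Then $\set{v_t > 1} = \set{v > 1}$ for every $t > 0$, so the jump term in $J$ is frozen and
\[
f(t) := J(v_t) = \half \int_{\set{v \le 1}} |\nabla v|^2\, dx + \frac{t^2}{2}\, A + \abs{\set{v > 1}} - \int_{\set{v > 1}} G(x, t(v - 1))\, dx
\]
for $t > 0$, where $A := \int_\Omega |\nabla v^+|^2\, dx$. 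The $\M$-relation gives $A = \int_\Omega v^+ g(x, v^+)\, dx$, and combining this with the pointwise comparisons \eqref{104}, \eqref{105} yields $f'(t) \ge tA\,(1 - t^{\mu - 2})$ on $(0,1)$ and $f'(t) \le tA\,(1 - t^{\mu - 2})$ on $(1,\infty)$; hence $f$ attains its unique maximum on $(0,\infty)$ at $t = 1$ with $f(1) = J(v)$. A direct computation together with the Ambrosetti-Rabinowitz bound \eqref{103} also gives $J(v_0) \le J(v) - (\half - \tfrac{1}{\mu})\, A < J(v)$. Choosing $T > 1$ with $J(v_T) < 0$ via \eqref{1031}, the concatenated path
\[
\gamma_v(s) = \begin{cases} 2s\, v_0, & s \in [0, \half], \\[4pt] v_{(2s - 1)T}, & s \in [\half, 1], \end{cases}
\]
lies in $\Gamma$ and satisfies $\max J \circ \gamma_v = J(v)$, so $c \le J(v)$.

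For the mountain-pass property, suppose $u \in \M$ with $J(u) = c$ and let $V$ be any neighborhood of $u$. The fibering specialized to $v = u$ gives $u_{1 \pm \delta} \in V$ and $J(u_{1 \pm \delta}) < c$ for all sufficiently small $\delta > 0$, so $\set{w \in V : J(w) < c}$ is nonempty. To rule out path-connectedness I argue by contradiction: if some continuous $\sigma : [0,1] \to V$ joined $u_{1 - \delta}$ to $u_{1 + \delta}$ entirely below level $c$, then (after suitable reparametrizations) concatenating the segment from $0$ to $u_0$, the modified ray from $u_0$ to $u_{1 - \delta}$, the path $\sigma$, and the modified ray from $u_{1 + \delta}$ to $u_T$ would produce a path in $\Gamma$ on which $J$ stays strictly below $c$, contradicting the definition \eqref{301}.

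The principal difficulty is identifying the correct fibering: the naive ray $t \mapsto tv$ does not work because the indicator term $\goodchi_{\set{tv > 1}}$ in $J$ changes support with $t$ and interacts with the $\M$-relation in a way that spoils the one-parameter monotonicity of $t \mapsto J(tv)$. The modification $v_t$ freezes $\set{v_t > 1}$ for all $t > 0$, reducing the analysis to a classical superlinear Nehari-type estimate in the single parameter $t$. With this observation in hand, the remaining mountain-pass-versus-Nehari manipulation is essentially a transcription of Jerison and Perera~\cite[Proposition 4.2]{MR3790500}.
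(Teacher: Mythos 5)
Your argument is correct and coincides with the approach the paper itself points to: the fibering $v_t = v^- + t\,v^+$ (your $v_t$, which freezes $\set{v_t > 1} = \set{v > 1}$ for $t>0$) is precisely the path $\zeta_v(t)$ from Section~\ref{Nondegeneracy}, and your monotonicity analysis of $f(t)=J(v_t)$ via \eqref{104}--\eqref{105} and the $\M$-relation reproduces the $\varphi_v$-analysis leading to $t_v = 1$ when $v\in\M$. The concatenated path argument for $c\le\inf_\M J$ and the contradiction argument for the mountain-pass property are the same scheme as Jerison--Perera \cite[Proposition~4.2]{MR3790500}, which is exactly the reference the paper defers to.
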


Let $\eps_j \searrow 0$. We will show that each approximating functional $J_{\eps_j}$ has a critical point $u_j$ of mountain pass type, apply Theorem \ref{Theorem 1}, and show that the singular limit $u$ is in $\M$ and satisfies
\[
J(u) = c = \inf_{v \in \M}\, J(v).
\]
Proposition \ref{Proposition 4} will then show that $u$ is a mountain pass point of $J$, and Proposition \ref{Proposition 3} bellow, proved in Section \ref{Nondegeneracy}, will give us the nondegeneracy estimates needed to apply Theorem \ref{Theorem 4}.

\begin{proposition} \label{Proposition 3}
Assume $(g_1)$--$(g_4)$. Let $u \in H^1_0(\Omega) \cap C^2(\closure{\Omega} \setminus F(u))$ be a Lipschitz continuous minimizer of $\restr{J}{\M}$ that satisfies the equation $- \Delta u = g(x,(u - 1)_+)$ in $\Omega \setminus F(u)$. Then $u$ is nondegenerate. If, in addition, $u$ satisfies the inequality $- \Delta u \le g(x,(u - 1)_+)$ in the distributional sense in $\Omega$, then $u$ has the positive density property for $\set{u > 1}$ and $\set{u \le 1}$.
\end{proposition}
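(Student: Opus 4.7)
The plan is to prove nondegeneracy and positive density by Alt--Caffarelli-style competitor arguments, using the Nehari scaling afforded by $(g_4)$ to keep the competitors on the manifold $\M$. For nondegeneracy, I argue by contradiction: suppose that for $\sigma > 0$ arbitrarily small there exist $x_0 \in \set{u > 1}$ with $r := \dist{x_0}{\set{u \le 1}} \le r_0$ and $u(x_0) < 1 + \sigma r$. I build a competitor $\tilde u \in H^1_0(\Omega)$ equal to $u$ outside $B_{r/2}(x_0)$ and equal to $\min(u,h)$ inside, where $h$ is the harmonic extension of $u|_{\partial B_{r/2}(x_0)}$; the bound $u(x_0) < 1 + \sigma r$ together with a harmonic mean-value estimate and the Lipschitz continuity of $u$ guarantees that $\L(\set{\tilde u \le 1} \cap B_{r/2}(x_0)) \ge c_0\, r^N$ for a dimensional $c_0 > 0$ once $\sigma$ is small. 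Since $\tilde u$ need not lie in $\M$, I use the Nehari scaling: by $(g_4)$ and the monotonicity \eqref{104}--\eqref{105}, the function $t \mapsto \int_\Omega [|\nabla (t\tilde u - 1)_+|^2 - (t\tilde u - 1)_+\, g(x,(t\tilde u - 1)_+)]\, dx$ has a unique positive zero $t(\tilde u)$, so $v := t(\tilde u)\, \tilde u \in \M$, and the projection is continuous with $|t(\tilde u) - 1| = O(\|\tilde u - u\|_{H^1_0})$.

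The energy comparison then splits as $J(v) - J(u) = I_\chi + I_\nabla + I_{\text{rest}}$: the characteristic-function term contributes $I_\chi \le -c_0\, r^N$ from the newly created portion of $\set{\tilde u \le 1}$; the Dirichlet term yields $I_\nabla = O(\sigma^2 r^N)$ by the standard harmonic-replacement estimate $\|u - h\|_{L^\infty(B_{r/2}(x_0))} = O(\sigma r)$; and the remainder $I_{\text{rest}}$, comprising the $G$-contribution and the correction from the Nehari projection, is $o(r^N)$ as $\sigma \to 0$ by the Lipschitz and $L^\infty$ bounds on $u$ together with the continuity of the projection. Choosing $\sigma$ small enough yields $J(v) < J(u) = \inf_\M J$, contradicting minimality and proving nondegeneracy.

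For the positive density property, the lower bound $\L(\set{u > 1} \cap B_r(x_0)) / \L(B_r(x_0)) \ge c$ at any $x_0 \in F(u)$ follows from nondegeneracy: for any $y \in \set{u > 1} \cap B_{r/2}(x_0)$ the inequality $u(y) - 1 \ge c'\, \dist{y}{\set{u \le 1}}$ combined with Lipschitz continuity of $u$ forces a Euclidean ball of radius proportional to $u(y) - 1$ inside $\set{u > 1}$, whence the density lower bound by a standard covering argument. For the complementary lower bound on $\L(\set{u \le 1} \cap B_r(x_0))$, I run the symmetric competitor argument: assume the bound fails, so that $\L(\set{u \le 1} \cap B_r(x_0)) < \eta\, \L(B_r(x_0))$ for $\eta$ arbitrarily small; construct $\bar u := u - \psi$ with $\psi \ge 0$ a bump of size $O(r)$ supported in $B_r(x_0)$, chosen so that $\bar u \le 1$ on a subset of $B_{r/2}(x_0)$ of measure $\ge c_0\, \L(B_r(x_0))$. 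The distributional inequality $-\Delta u \le g(x,(u - 1)_+)$ enters through the pointwise comparison $u \le w$ on $B_r(x_0)$, where $w$ solves $-\Delta w = g(x,(u - 1)_+)$ with $w = u$ on $\partial B_r(x_0)$; this comparison controls the way the local modification changes $\set{\bar u > 1}$ in $B_r(x_0)$ and underlies the favorable energy balance. Projecting $\bar u$ onto $\M$ as above and repeating the energy split produces $J(v) < J(u)$, again a contradiction.

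The main obstacle is the Nehari projection. Because the truncation $(\cdot - 1)_+$ is not positively homogeneous, the scaling $v \mapsto tv$ perturbs the Nehari constraint nonlinearly, and I must establish (i) existence and uniqueness of $t(\tilde u) > 0$, which follow from $(g_4)$ via the strict monotonicity encoded in \eqref{104}--\eqref{105}, and (ii) the quantitative continuity $|t(\tilde u) - 1| = O(\|\tilde u - u\|_{H^1_0})$ with constants uniform for $r \le r_0$. A secondary technical point is verifying that the $G$-term and the projection correction in each energy comparison are genuinely of order $o(r^N)$, so that they do not cancel the $O(r^N)$ main terms; this relies on $(g_2)$, on the $H^1_0 \cap L^\infty$ bounds on $u$, and on careful bookkeeping.
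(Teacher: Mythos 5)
Your nondegeneracy competitor does not work as described. Once you fix $x_0\in\set{u>1}$ with $r=\dist{x_0}{\set{u\le 1}}$, the entire ball $B_r(x_0)$ lies in $\set{u>1}$, so in particular $u>1$ on $\bdry{B_{r/2}(x_0)}$. If $h$ is the harmonic extension of $u|_{\bdry{B_{r/2}(x_0)}}$ into $B_{r/2}(x_0)$, then by the minimum principle $h>1$ throughout $B_{r/2}(x_0)$, hence $\min(u,h)>1$ there as well. Consequently $\L(\set{\tilde u\le 1}\cap B_{r/2}(x_0))=0$, not $\ge c_0 r^N$, and there is no gain $I_\chi\le -c_0 r^N$ in the characteristic-function term. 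You must instead \emph{force} the competitor to equal $1$ on a fixed fraction of the ball. The paper's competitor does exactly this: rescale to $v(y)=\tfrac1r(u(x_0+ry)-1)$, bound $v\le c_2(\alpha+r)$ on $B_{2/3}(0)$ by Harnack (after subtracting a particular solution $h$ for the right-hand side), and replace $v$ by $\min(v,\,c_2(\alpha+r)\psi)$ with a cutoff $\psi$ vanishing identically on $\closure{B_{1/3}(0)}$. The competitor $z=1+rw$ then equals $1$ on $\closure{B_{r/3}(x_0)}$ by construction, which is what produces the volume gain $\vol{B_{r/3}(x_0)}$.

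Your choice of projection onto $\M$ is also the wrong one, and this is not merely a technical inconvenience. With radial scaling $v\mapsto t v$ the truncation $(tv-1)_+$ is not homogeneous in $t$ — the sets $\set{tv>1}$ move with $t$ — so the monotonicity statements \eqref{104}--\eqref{105}, which control $g(x,ts)$ against $t^{\mu-1}g(x,s)$, do not plug in, and your claims that existence, uniqueness, and Lipschitz continuity of $t(\tilde u)$ ``follow from $(g_4)$'' are unjustified. The paper resolves this by using the \emph{nonradial} projection $\pi(u)=u^-+t_u u^+$ along the path $\zeta_u(t)=u^-+t u^+$, $t>0$, for which $(\pi(u)-1)_+=t_u\,(u-1)_+$ exactly and $\set{\pi(u)>1}=\set{u>1}$ is unchanged. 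The Nehari equation then reduces to $t\int|\nabla u^+|^2 = \int u^+\,g(x,t u^+)$, to which \eqref{104}--\eqref{105} apply directly; this also yields the clean two-sided estimates for $t_u$ in Lemma~\ref{Lemma 2}, which are what controls the ``projection correction'' in the energy comparison.

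For the positive density bound your competitor $\bar u=u-\psi$ with a nonnegative bump also seems to go in an unproductive direction: subtracting $\psi$ increases $\set{\bar u\le 1}$, which \emph{decreases} the $\vol{\set{\cdot>1}}$ term in $J$, but it simultaneously adds $\int|\nabla\psi|^2\sim r^{N-2}\cdot r^2=r^N$ to the Dirichlet energy plus an uncontrolled cross term, so there is no visible net gain. The paper instead takes harmonic replacement $v$ of $u$ in $B_r(x_0)$ for a good radius $r\in[\rho/2,\rho]$ chosen so that $\sigma(\set{u\le1}\cap\bdry{B_r(x_0)})$ is small, and the decisive estimate is the Dirichlet-energy drop $\int_{B_r}|\nabla v|^2\le\int_{B_r}|\nabla u|^2-cr^N$ of Lemma~\ref{Lemma 14} (quoted from \cite[Lemma~7.3]{MR3790500}); the possible increase of $\vol{\set{\cdot>1}}$ is only $O(\lambda\rho^N)$, which is negligible. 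The distributional inequality $-\Delta u\le g(x,(u-1)_+)$ enters precisely in deriving that Dirichlet-energy drop, not in the vague ``pointwise comparison'' role you assign it.
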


First we consider the subcritical case where $(g_2)$ is replaced with the more restrictive growth condition
\begin{enumerate}
\item[$(g_2')$] for some $a_1, a_2 > 0$,
    \[
    g(x,s) \le a_1 + a_2\, s^{p-1}
    \]
    for all $(x,s) \in \Omega \times [0,\infty)$, where $p > 2$ if $N = 2$ and $2 < p < 2^\ast$ if $N \ge 3$.
\end{enumerate}
We note that $p \ge \mu$ by \eqref{1031}.

For example, let $p > \mu > 2$ if $N = 2$ and $2 < \mu < p < 2^\ast$ if $N \ge 3$, and let $a$ be a $C^1$-function on $\Omega \times [0,\infty)$ satisfying
\begin{enumerate}
\item[$(i)$] $a(x,s) > 0$ for all $x \in \Omega$ and $s > 0$,
\item[$(ii)$] for some $a_3, a_4 > 0$,
    \[
    a(x,s) \le a_3 + a_4\, s^{p - \mu}
    \]
    for all $(x,s) \in \Omega \times [0,\infty)$,
\item[$(iii)$] the mapping $s \mapsto a(x,s)$ is nondecreasing for all $(x,s) \in \Omega \times [0,\infty)$.
\end{enumerate}
Then $g(x,s) = a(x,s)\, s^{\mu - 1}$ satisfies $(g_1)$, $(g_2')$, $(g_3)$, and $(g_4)$.

Our main result in the subcritical case is the following theorem.

\begin{theorem} \label{Theorem 2}
Assume $(g_1)$, $(g_2')$, $(g_3)$, and $(g_4)$. Then problem \eqref{1} has a Lipschitz continuous mountain pass solution $u \in H^1_0(\Omega) \cap C^2(\closure{\Omega} \setminus F(u))$ at the level $c$ that satisfies the equation $- \Delta u = g(x,(u - 1)_+)$ in the classical sense in $\Omega \setminus F(u)$. The free boundary condition is satisfied in the viscosity sense and in the variational sense. The free boundary $F(u)$ has finite $(N - 1)$-dimensional Hausdorff measure and is a $C^\infty$-hypersurface except on a closed set of Hausdorff dimension at most $N - 3$. Near the smooth subset of $F(u)$, $(u - 1)_\pm$ are smooth and the free boundary condition is satisfied in the classical sense.
\end{theorem}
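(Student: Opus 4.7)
The plan is to carry out the program sketched after Proposition \ref{Proposition 3}: produce mountain pass critical points $u_j$ of each regularized functional $J_{\eps_j}$, extract a Lipschitz singular limit via Theorem \ref{Theorem 1}, and then verify that the limit $u$ is a minimizer of $\restr{J}{\M}$ realizing the mountain pass level $c$. Once this is in hand, Propositions \ref{Proposition 4}, \ref{Proposition 3}, \ref{Proposition 1}, and \ref{Proposition 2}, together with Theorem \ref{Theorem 4}, deliver every conclusion of Theorem \ref{Theorem 2}.

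The first step is to check a mountain pass geometry for $J_\eps$ that is uniform in $\eps$. Since $\B((u-1)/\eps) \le \goodchi_{\set{u>1}}(x)$ pointwise, one has $J_\eps \le J$ on $H^1_0(\Omega)$; hence every path in $\Gamma$ is admissible for $J_\eps$, and $c_\eps \le c$. Using $(g_2')$ together with the superhomogeneity at zero encoded in $(g_4)$ (which yields $G(x,s) \le C(s^\mu + s^p)$ with $\mu, p > 2$) and the Sobolev embedding, one obtains $J_\eps(u) \ge \half \|u\|_{H^1_0}^2 - C(\|u\|_{H^1_0}^\mu + \|u\|_{H^1_0}^p)$, so there is a common positive lower bound $\alpha$ on a small sphere. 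The superquadratic bound \eqref{1031} provides a common negative-energy endpoint. Subcriticality of $p$ together with $(g_4)$ gives the Palais--Smale condition for $J_\eps$, and the mountain pass theorem then supplies a critical point $u_j$ of $J_{\eps_j}$ with $\alpha \le c_{\eps_j} \le c$. The Ambrosetti--Rabinowitz inequality \eqref{103} applied to $J_{\eps_j}(u_j) - \frac{1}{\mu} J_{\eps_j}'(u_j)\, u_j$ yields a uniform $H^1_0(\Omega)$ bound (the $\B$ and $\beta$ contributions, being supported in $\set{0 < u_j - 1 < \eps_j}$, are controlled uniformly in $j$), and a Brezis--Kato / Moser iteration, valid because $p < 2^\ast$ in $(g_2')$, promotes this to a uniform $L^\infty(\Omega)$ bound.

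Theorem \ref{Theorem 1} then produces, along a subsequence, a Lipschitz continuous singular limit $u \in H^1_0(\Omega) \cap C^2(\closure{\Omega} \setminus F(u))$ which solves $-\Delta u = g(x,(u-1)_+)$ classically in $\Omega \setminus F(u)$ and satisfies $J(u) \le \varliminf J_{\eps_j}(u_j) \le \varlimsup J_{\eps_j}(u_j) \le J(u) + \vol{\set{u=1}}$; the lower bound $c_{\eps_j} \ge \alpha > 0$ forces $u$ to be nontrivial. To show $u \in \M$, the plan is to multiply the classical equation on the open set $\set{u > 1}$ by $u - 1$ and integrate by parts: the boundary contribution on $F(u)$ vanishes because $u - 1 = 0$ there, the integration by parts being legitimate thanks to the Lipschitz regularity of $u$ and a standard cutoff approximation. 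This yields $\int_\Omega |\nabla (u-1)_+|^2\, dx = \int_\Omega (u-1)_+\, g(x,(u-1)_+)\, dx$, i.e.\ $u \in \M$. Proposition \ref{Proposition 4} then gives $J(u) \ge \inf_\M J \ge c$, while Theorem \ref{Theorem 1}(iii) combined with $c_{\eps_j} \le c$ gives $J(u) \le \varliminf c_{\eps_j} \le c$. Therefore $J(u) = c = \inf_\M J$, $u$ is a minimizer of $\restr{J}{\M}$, the sandwich in Theorem \ref{Theorem 1}(iii) collapses so that $J_{\eps_j}(u_j) \to J(u)$ and $\vol{\set{u=1}} = 0$. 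Proposition \ref{Proposition 4} now upgrades $u$ to a mountain pass point, Proposition \ref{Proposition 3} supplies nondegeneracy and the positive density property, Proposition \ref{Proposition 1} gives the viscosity free boundary condition, and Proposition \ref{Proposition 2} (whose hypothesis is now verified) gives the variational one, at which point Theorem \ref{Theorem 4} closes the argument.

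The main technical obstacle is the uniform $L^\infty$ bound on $u_j$: the Brezis--Kato / Moser iteration has to be carried out so that the constants stay bounded as $\eps_j \searrow 0$, and the strict subcriticality $p < 2^\ast$ in $(g_2')$ is indispensable for this. A secondary delicate point is the integration by parts establishing $u \in \M$, which relies on the Lipschitz regularity supplied by Theorem \ref{Theorem 1} and the vanishing of $u - 1$ on $F(u)$; once that identity is in place, the remainder of the argument consists of chaining the structural results of the previous sections.
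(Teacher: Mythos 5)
Your proposal matches the paper's proof essentially step for step: uniform mountain pass geometry and the Palais--Smale condition for $J_{\eps_j}$, mountain pass critical points $u_j$ at levels $c_j \in [\alpha,c]$, uniform $H^1_0(\Omega) \cap L^\infty(\Omega)$ bounds via the Ambrosetti--Rabinowitz structure and a Brezis--Kato type argument, the singular limit via Theorem \ref{Theorem 1}, the verification that $u \in \M$, the sandwich $c \le \inf_\M J \le J(u) \le \varliminf J_{\eps_j}(u_j) \le \varlimsup J_{\eps_j}(u_j) \le c$, and finally Propositions \ref{Proposition 4}, \ref{Proposition 3}, \ref{Proposition 1}, \ref{Proposition 2} and Theorem \ref{Theorem 4}. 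One small, extraneous slip: the collapse of the sandwich yields $J_{\eps_j}(u_j) \to J(u)$, but it does not give $\vol{{\set{u = 1}}} = 0$, since the inequality $\varlimsup J_{\eps_j}(u_j) \le J(u) + \vol{{\set{u = 1}}}$ from Theorem \ref{Theorem 1}\ref{Theorem 1.iii} remains satisfied for any value of $\vol{{\set{u = 1}}}$ once all three quantities $\varliminf$, $\varlimsup$, and $J(u)$ coincide; fortunately this claim is not used, as Proposition \ref{Proposition 2} requires only $J_{\eps_j}(u_j) \to J(u)$.
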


This theorem will be proved in Section \ref{Subcritical}. Finally we let $N \ge 3$ and consider the critical problem
\begin{equation} \label{500}
\left\{\begin{aligned}
- \Delta u & = \kappa\, (u - 1)_+^{2^\ast - 1} + \lambda\, (u - 1)_+^{\mu - 1} && \text{in } \Omega \setminus F(u)\\[10pt]
|\nabla u^+|^2 - |\nabla u^-|^2 & = 2 && \text{on } F(u)\\[10pt]
u & = 0 && \text{on } \bdry{\Omega},
\end{aligned}\right.
\end{equation}
where $\kappa, \lambda > 0$ are parameters and $2 < \mu < 2^\ast$. Let
\[
J(u) = \int_\Omega \left[\half\, |\nabla u|^2 + \goodchi_{\set{u > 1}}(x) - \frac{\kappa}{2^\ast}\, (u - 1)_+^{2^\ast} - \frac{\lambda}{\mu}\, (u - 1)_+^\mu\right] dx, \quad u \in H^1_0(\Omega)
\]
be the associated variational functional and let
\[
c := \inf_{\gamma \in \Gamma}\, \max_{u \in \gamma([0,1])}\, J(u) > 0,
\]
where $\Gamma = \set{\gamma \in C([0,1],H^1_0(\Omega)) : \gamma(0) = 0,\, J(\gamma(1)) < 0}$, be its mountain pass level. Our main result for this problem is the following theorem.

\begin{theorem} \label{Theorem 3}
Let $N \ge 3$ and assume that $2 < \mu < 2^\ast$. Then there exists a $\kappa_\ast > 0$ such that for $0 < \kappa < \kappa_\ast$ and all $\lambda > 0$, problem \eqref{500} has a Lipschitz continuous mountain pass solution $u \in H^1_0(\Omega) \cap C^2(\closure{\Omega} \setminus F(u))$ at the level $c$ that satisfies the equation $- \Delta u = \lambda\, (u - 1)_+^{\mu - 1} + \kappa\, (u - 1)_+^{2^\ast - 1}$ in the classical sense in $\Omega \setminus F(u)$. The free boundary condition is satisfied in the viscosity sense and in the variational sense. The free boundary $F(u)$ has finite $(N - 1)$-dimensional Hausdorff measure and is a $C^\infty$-hypersurface except on a closed set of Hausdorff dimension at most $N - 3$. Near the smooth subset of $F(u)$, $(u - 1)_\pm$ are smooth and the free boundary condition is satisfied in the classical sense.
\end{theorem}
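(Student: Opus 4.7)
The proof follows the general strategy indicated in the paragraph preceding Proposition \ref{Proposition 3}: construct mountain pass critical points $u_j$ of the regularized functionals $J_{\eps_j}$, apply Theorem \ref{Theorem 1} to obtain a singular limit $u$, show $u\in\M$ with $J(u)=c=\inf_\M J$, and then invoke Propositions \ref{Proposition 4}, \ref{Proposition 3}, \ref{Proposition 1} and \ref{Proposition 2} together with Theorem \ref{Theorem 4} to conclude. The new difficulty compared with Theorem \ref{Theorem 2} is the loss of compactness caused by the critical exponent; it will be overcome by a Brezis--Nirenberg--type argument in which the smallness of $\kappa$ and the subcritical perturbation $\lambda\,(u-1)_+^{\mu-1}$ push the mountain pass level below the compactness threshold.

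\textbf{Steps 1--2: uniform mountain pass geometry and sub-threshold estimate.} First I verify that the regularized functionals $J_\eps$ associated with \eqref{500} have the mountain pass geometry uniformly in $\eps$. The lower bound on small spheres follows from Sobolev embedding together with $0\le\B((u-1)/\eps)\le\goodchi_{\set{u>1}}$, while the existence of a path with $J_\eps(\gamma(1))<0$ follows from the superquadratic lower bound \eqref{1031}. The corresponding mountain pass levels $c_\eps$ satisfy $c_\eps\to c$ as $\eps\searrow 0$. The main obstacle is the next step: establishing the strict inequality
\[
c_\eps \;<\; \frac{1}{N}\,\frac{S^{N/2}}{\kappa^{(N-2)/2}}
\]
uniformly in small $\eps$, where $S$ is the best constant in the Sobolev embedding $H^1_0(\Omega)\hookrightarrow L^{2^\ast}(\Omega)$. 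I would test the mountain pass class along a ray $t\mapsto t\,U_\delta$, where $U_\delta$ is a suitably cut-off Talenti instanton centered in $\Omega$, exploiting the standard estimates
\[
\norm[2]{\nabla U_\delta}^2 = S^{N/2} + \O(\delta^{N-2}), \qquad \norm[2^\ast]{U_\delta}^{2^\ast} = S^{N/2} + \O(\delta^N),
\]
together with the subcritical gain $\int_\Omega (tU_\delta - 1)_+^\mu\,dx \gtrsim \delta^{N-(N-2)\mu/2}$, which blows up as $\delta\searrow 0$ because $\mu<2^\ast$. The contribution of the extra term $\B((tU_\delta-1)/\eps)$ along the path is bounded by $\vol{\Omega}$ uniformly in $\eps$, hence is absorbed by the unbounded subcritical gain. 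Choosing $\delta$ small and $\kappa<\kappa_\ast$ for a threshold $\kappa_\ast$ depending only on $N,\mu,\lambda,\vol{\Omega}$ yields the required strict inequality, uniformly in $\eps$.

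\textbf{Step 3: existence of mountain pass critical points and uniform bounds.} The Ambrosetti--Rabinowitz inequality \eqref{103} gives a uniform $H^1_0$-bound on any Palais--Smale sequence for $J_\eps$. Below the threshold identified in Step 2, a standard concentration-compactness analysis in the spirit of Brezis--Nirenberg rules out energy loss to bubbles and restores the Palais--Smale condition at level $c_\eps$, producing a mountain pass critical point $u_\eps$. Moser iteration applied to \eqref{13} (with the sub-threshold $L^{2^\ast}$-control preventing blow-up of the critical nonlinearity) gives a uniform $L^\infty$-bound, so that the sequence $u_j:=u_{\eps_j}$ is bounded in $H^1_0(\Omega)\cap L^\infty(\Omega)$ and Theorem \ref{Theorem 1} applies.

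\textbf{Step 4: passing to the limit.} Theorem \ref{Theorem 1} yields a Lipschitz singular limit $u\in H^1_0(\Omega)\cap C^2(\closure{\Omega}\setminus F(u))$ with $u_j\to u$ strongly in $H^1_0(\Omega)$ and uniformly on $\closure{\Omega}$. Testing the regularized equation against $u_j-1$ on $\set{u_j>1}$, passing to the limit using $(g_3)$--$(g_4)$, and using Brezis--Lieb on the critical term gives $u\in\M$ and $J_{\eps_j}(u_j)\to J(u)$; the identification $J(u)=c=\inf_\M J$ then follows from Proposition \ref{Proposition 4} and the mountain pass inequality $c\le\inf_\M J$. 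Proposition \ref{Proposition 4} now identifies $u$ as a mountain pass point; Proposition \ref{Proposition 3}, applicable because $u$ minimizes $\restr{J}{\M}$, provides the nondegeneracy and positive-density estimates; Propositions \ref{Proposition 1} and \ref{Proposition 2} then give the viscosity and variational free-boundary conditions; and Theorem \ref{Theorem 4} finally delivers the claimed regularity of $F(u)$.

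\textbf{Main obstacle.} The delicate point is Step 2, namely the sub-threshold estimate for $c_\eps$. One must carefully track the contribution of the non-standard $\B((u-1)/\eps)$ term along the instanton path and show that, because this term is uniformly bounded by $\vol{\Omega}$, it is dominated by the divergent subcritical gain for $\delta$ small. The smallness condition $\kappa<\kappa_\ast$ enters precisely at this point, since $\kappa$ multiplies the critical nonlinearity and thereby determines the compactness threshold.
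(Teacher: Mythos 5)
Your overall plan matches the paper's architecture (regularize, produce mountain pass critical points $u_j$, pass to the singular limit, identify $u$ as a minimizer of $\restr{J}{\M}$, then invoke Propositions \ref{Proposition 1}--\ref{Proposition 4} and Theorem \ref{Theorem 4}), but your Step 2 is both unnecessary and, as written, flawed, and Steps 3--4 elide points that the paper has to handle carefully.

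\emph{Step 2.} You propose a Brezis--Nirenberg instanton expansion to push $c_{\eps}$ below the threshold $\frac{1}{N}\,\frac{S^{N/2}}{\kappa^{N/2-1}}$. Two problems. First, the quantitative claim is wrong: $\int_\Omega (tU_\delta-1)_+^\mu\,dx$ scales like $\delta^{\,N-(N-2)\mu/2}$, which tends to $0$ as $\delta\searrow 0$ (it does not ``blow up''), and the instanton argument needs this gain to \emph{dominate} the $\O(\delta^{N-2})$ error in $\norm[2]{\nabla U_\delta}^2$, which requires $\mu>4/(N-2)$. For $N=3$ and $2<\mu\le 4$ the gain is $\o(\delta^{N-2})$ and the naive Brezis--Nirenberg inequality fails --- the classical low-dimension obstruction, which your sketch does not address. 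Second, and more to the point, no instanton analysis is needed: since $N\ge 3$ the threshold $\frac{1}{N}\,\frac{S^{N/2}}{\kappa^{N/2-1}}$ diverges as $\kappa\searrow 0$. The paper simply fixes $u_0$ with $u_0^+\ne 0$, uses $\B\le 1$ and drops the $\kappa$-term (which has the favourable sign) to obtain $t_1$, independent of $j$ and $\kappa$, with $J_{\eps_j}(u_0^-+t_1u_0^+)<0$; any fixed path from $0$ to $u_0^-+t_1u_0^+$ then bounds $c_j$ above by a quantity independent of $j$ and $\kappa$, and choosing $\kappa<\kappa^\ast$ small puts this bound below the diverging threshold with no asymptotic expansions at all.

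\emph{Step 3.} The compactness lemma (Lemma \ref{Lemma 301}) only yields a nontrivial critical point $u_j$ with $J_{\eps_j}(u_j)\le c_j$, not one at level exactly $c_j$; the \PS{c_j} condition is not restored in the usual sense, so you cannot speak of ``producing a mountain pass critical point at $c_\eps$.'' Also, the uniform $L^\infty$ bound is not a one-line Moser iteration: the paper reduces it to bounding $\pnorm[2^\ast]{u_j^{N/(N-2)}}$ by testing the regularized equation against $u_j^{2^\ast-1}$ and absorbing the critical term via Sobolev and H\"older, and this absorption imposes a \emph{second}, independent smallness restriction on $\kappa$ --- which is precisely why the theorem's $\kappa_\ast$ may be strictly smaller than the $\kappa^\ast$ from the sub-threshold estimate. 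Your outline does not surface this extra restriction.

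\emph{Step 4.} Because $J_{\eps_j}(u_j)\le c_j$ may be strict, nontriviality of the singular limit $u$ does not follow from Theorem \ref{Theorem 1}~\ref{Theorem 1.iii}. The paper instead derives from $J_{\eps_j}'(u_j)\,u_j=0$ and $u_j>0$ the Nehari-type bound $\norm{u_j}^2\le C\left(\norm{u_j}^{2^\ast}+\norm{u_j}^\mu\right)$, which with $2<\mu<2^\ast$ gives $\inf_j\norm{u_j}>0$ and hence $u\ne 0$ after the strong $H^1_0$-convergence. The identification $J(u)=c=\lim J_{\eps_j}(u_j)$ then follows from the sandwich $c\le\inf_\M J\le J(u)\le\varliminf J_{\eps_j}(u_j)\le\varlimsup J_{\eps_j}(u_j)\le c$ (using $J_{\eps_j}(u_j)\le c_j\le c$, Proposition \ref{Proposition 4}, and Theorem \ref{Theorem 1}~\ref{Theorem 1.iii}), not from a Brezis--Lieb decomposition.
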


This theorem will be proved in Section \ref{Critical}. The limiting case $\mu = 2$ was considered in Yang and Perera \cite{MR3872792}, where a nondegenerate mountain pass solution that satisfies the free boundary condition in the viscosity sense was obtained for sufficiently small $\kappa > 0$ and $\lambda > \lambda_1$, the first Dirichlet eigenvalue of $- \Delta$ in $\Omega$. However, the question of regularity of the free boundary was not considered in \cite{MR3872792}.

\begin{notation}
Throughout the paper we write
\[
u^+ = (u - 1)_+, \qquad u^- = u - u^+.
\]
\end{notation}

\section{Regularization} \label{Regularization}

In this section we prove Theorem \ref{Theorem 1} and Propositions \ref{Proposition 1} and \ref{Proposition 2}.

The crucial ingredient in the passage to the limit in the proof of Theorem \ref{Theorem 1} is the following uniform Lipschitz continuity result of Caffarelli et al.\! \cite{MR1906591}.

\begin{lemma}[{\cite[Theorem 5.1]{MR1906591}}] \label{Lemma 6}
Let $u$ be a Lipschitz continuous function on $B_1(0) \subset \R^N$ satisfying the distributional inequalities
\[
\pm \Delta u \le A \left(\frac{1}{\eps}\, \goodchi_{\set{|u - 1| < \eps}}(x) + 1\right)
\]
for some constants $A > 0$ and $0 < \eps \le 1$. Then there exists a constant $C > 0$, depending on $N$, $A$, and $\dint_{B_1(0)} u^2\, dx$, but not on $\eps$, such that
\[
\max_{x \in B_{1/2}(0)}\, |\nabla u(x)| \le C.
\]
\end{lemma}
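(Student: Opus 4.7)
The proof uses a blow-up/compactness argument combined with the Alt--Caffarelli--Friedman monotonicity formula, following Caffarelli, Jerison, and Kenig's approach to two-phase free boundary problems. I would argue by contradiction: suppose the conclusion fails, so there exist sequences $u_j$, $\eps_j$, and points $x_j \in B_{1/2}(0)$ at which $|\nabla u_j(x_j)| = L_j \to \infty$, with $\dint_{B_1(0)} u_j^2\, dx$ bounded. Passing to a subsequence, $\eps_j \to \eps_\infty$; if $\eps_\infty > 0$ the right-hand side of $\pm \Delta u_j \le A(\eps_j^{-1}\goodchi_{\{|u_j-1|<\eps_j\}} + 1)$ is uniformly bounded and classical interior elliptic estimates close the case, so assume $\eps_j \to 0$. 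Since $|\Delta u_j| \le A$ outside the layer $\{|u_j - 1| < \eps_j\}$, interior estimates together with the $L^2$ bound force $\dist{x_j}{\{|u_j - 1| < \eps_j\}} = \O(\eps_j)$, and in particular $|u_j(x_j) - 1| = \O(L_j \eps_j)$.

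Rescale at the critical scale $r_j := L_j \eps_j$ by setting
\[
v_j(y) = \frac{u_j(x_j + r_j y) - u_j(x_j)}{L_j r_j}.
\]
Then $v_j(0) = 0$, $|\nabla v_j(0)| = 1$ (after choosing $x_j$ to attain $L_j$), and the transformed inequality becomes $\pm \Delta v_j \le A\, \goodchi_{E_j} + A\eps_j$ with rescaled layer $E_j = \{|v_j - c_j| < 1/L_j^2\}$ of vanishing transverse width and $|c_j| \to 0$. A standard localization/covering argument using the two-sided Laplacian bound and the $L^2$ control yields uniform local $L^\infty$ and Lipschitz estimates on $v_j$, so along a subsequence $v_j \to v$ locally uniformly on $\R^N$ with $v$ globally Lipschitz, $v(0) = 0$, and $v$ harmonic on each of $\{v > 0\}$ and $\{v < 0\}$ (since the bulk term $A\eps_j$ and the layer width $1/L_j^2$ both vanish in the limit). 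The functions $v^\pm := \max(\pm v, 0)$ are then nonnegative, continuous, subharmonic on $\R^N$, vanish at $0$, and have disjoint supports, so the ACF monotonicity formula gives that
\[
\Phi(r) = \frac{1}{r^4} \int_{B_r(0)} \frac{|\nabla v^+|^2}{|x|^{N-2}}\, dx \int_{B_r(0)} \frac{|\nabla v^-|^2}{|x|^{N-2}}\, dx
\]
is nondecreasing. Global Lipschitz continuity of $v$ bounds $\Phi(\infty)$ by a constant depending only on the limiting Lipschitz norm, and the rigidity case of ACF forces $v$ to be a two-plane solution $v(y) = \alpha\, {\ipa{y}{\nu}}_+ - \beta\, {\ipa{y}{\nu}}_-$ with $\alpha, \beta \ge 0$ bounded. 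Carrying the $\pm \eps_j^{-1}$ bound through the layer in the blow-up limit yields a free boundary flux identity bounding $\alpha^2 - \beta^2$. Combining these bounds, undoing the rescaling and tracing the gradient estimate back to the original scale gives $L_j \le C$, contradicting $L_j \to \infty$.

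The main obstacle is the scale selection: $r_j = L_j \eps_j$ is forced because at larger scales the $\eps_j$-layer thins out to a negligible set in $y$-coordinates before its Laplacian contribution can be detected, while at smaller scales the $\eps_j^{-1}$-residual remains unbounded and blocks passage to a limit. A secondary subtlety is the passage of ACF to the limit: the finite-$j$ functions $(v_j - c_j)^\pm$ are not subharmonic inside the layer $E_j$, so one either works with auxiliary functions like $(u_j - 1 - \eps_j)_+$ and $(1 - \eps_j - u_j)_+$ that are subharmonic outside the layer, or performs the ACF argument on the limit $v$ together with a stability statement for $\Phi$ under locally uniform convergence. Finally, the local $L^\infty$ and Lipschitz bounds on $v_j$ near $y = 0$ need to be extracted by a Reifenberg-type localization (supremum of $(1 - 2|x|)|\nabla u_j(x)|$ over $B_{1/2}(0)$, attained at an interior point), so that the blow-up can be performed at an almost-maximal point where further large gradients in a fixed neighborhood are a priori excluded.
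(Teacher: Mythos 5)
The paper does not reprove this lemma; it is imported verbatim from Caffarelli--Jerison--Kenig \cite{MR1906591}, whose argument is \emph{direct and quantitative}: they apply their generalized Alt--Caffarelli--Friedman monotonicity estimate (the version allowing $\Delta w_i \ge -f_i$ with controlled $f_i$, not just subharmonic $w_i$) to the two phase functions $(u-1)_+$ and $(u-1)_-$ at the original scale, obtain an $\eps$-independent bound $\Phi(r) \le C(N,A,\norm[L^2(B_1)]{u})$ for $r \le 1/2$, and then convert this to a pointwise gradient bound using the scaling of $\Phi$ together with interior elliptic estimates on balls adapted to the distance from the free boundary. There is no compactness or blow-up anywhere in their proof, so your argument is a genuinely different route, not a reconstruction of theirs.

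Unfortunately the blow-up endgame in your proposal does not close. After rescaling by $L_j$ and $r_j = L_j\eps_j$ you have normalized $|\nabla v_j(0)| = 1$ by fiat, and the two Laplacian terms both vanish in the limit: the constant term $A\eps_j \to 0$, and the layer $\tilde E_j$ has transverse width $\sim 1/L_j^2 \to 0$, so its distributional contribution disappears. The limit $v$ is therefore globally harmonic and globally Lipschitz, hence affine. An affine function with $|\nabla v| = 1$ is perfectly consistent with everything you have assumed, so no contradiction with $L_j \to \infty$ is produced; the ACF step likewise gives $\alpha\beta \le C(\text{Lip } v)^2$, which is a tautology after the normalization. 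The ``free boundary flux identity bounding $\alpha^2 - \beta^2$'' cannot materialize either, precisely because the layer's contribution vanishes in the limit (and in any case the distributional divergence-theorem flux across a thin layer bounds $\alpha + \beta$, not $\alpha^2 - \beta^2$). The missing ingredient is a mechanism that feeds the hypothesis $\dint_{B_1}u^2\,dx \le M$ into the rescaled picture; but the rescaled $L^2$ norm of $v_j$ at a fixed scale $R$ only sees $u_j$ on the shrinking ball $B_{Rr_j}(x_j)$, on which the unit-scale $L^2$ bound gives no lower bound, so the loop does not close. Secondary issues: the claim $\dist{x_j}{\{|u_j - 1| < \eps_j\}} = \O(\eps_j)$ is unsupported (interior estimates give $\O(1/L_j)$, a different quantity), and the scale $r_j = L_j\eps_j$ need not tend to $0$, so the rescaled domain need not exhaust $\R^N$. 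To repair the proof along your lines you would need to work at a scale tied to the distance from $x_j$ to the free boundary and bring the quantitative ACF bound (applied before passing to the limit) into direct tension with the unit gradient at the blow-up origin; at that point you are essentially reproducing the CJK argument, and the compactness step becomes superfluous.
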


\begin{proof}[Proof of Theorem \ref{Theorem 1}]
We may assume that $0 < \eps_j \le 1$. Since $\seq{u_j}$ is bounded in $L^\infty(\Omega)$, $|g(x,(u_j - 1)_+)| \le A_0$ for some constant $A_0 > 0$ by $(g_2)$. Let $\varphi_0 > 0$ be the solution of
\[
\left\{\begin{aligned}
- \Delta \varphi_0 & = A_0 && \text{in } \Omega\\[10pt]
\varphi_0 & = 0 && \text{on } \bdry{\Omega}.
\end{aligned}\right.
\]
Since $\beta \ge 0$, $- \Delta u_j \le A_0$ in $\Omega$, and hence
\begin{equation} \label{3.4}
0 \le u_j(x) \le \varphi_0(x) \quad \forall x \in \Omega
\end{equation}
by the maximum principle. The majorant $\varphi_0$ gives a uniform lower bound $\delta_0 > 0$ on the distance from the set $\set{u_j \ge 1}$ to $\bdry{\Omega}$. Since $u_j$ is positive, harmonic, and bounded by $1$ in a $\delta_0$ neighborhood of $\bdry{\Omega}$, it follows from standard boundary regularity theory that the sequence $\seq{u_j}$ is bounded in the $C^{2,\alpha}$ norm, and hence compact in the $C^2$ norm, in a $\delta_0/2$ neighborhood.

Since $0 \le \beta \le 2\, \goodchi_{(-1,1)}$,
\[
\pm \Delta u_j = \pm \frac{1}{\eps_j}\, \beta\left(\frac{u_j - 1}{\eps_j}\right) \mp g(x,(u_j - 1)_+) \le \frac{2}{\eps_j}\, \goodchi_{\set{|u_j - 1| < \eps_j}}(x) + A_0.
\]
Since $\seq{u_j}$ is bounded in $L^2(\Omega)$, it follows from Lemma \ref{Lemma 6} that there exists a constant $C > 0$ such that
\begin{equation} \label{3.2}
\max_{x \in B_{r/2}(x_0)}\, |\nabla u_j(x)| \le \frac{C}{r}
\end{equation}
whenever $r > 0$ and $B_r(x_0) \subset \Omega$. Hence $u_j$ is uniformly Lipschitz continuous on the compact subset of $\Omega$ at distance greater or equal to $\delta_0/2$ from $\bdry{\Omega}$.

Thus, a renamed subsequence of $\seq{u_j}$ converges uniformly on $\closure{\Omega}$ to a Lipschitz continuous function $u$ with zero boundary values, with strong convergence in $C^2$ on a $\delta_0/2$ neighborhood of $\bdry{\Omega}$. Since $\seq{u_j}$ is bounded in $H^1_0(\Omega)$, a further subsequence converges weakly in $H^1_0(\Omega)$ to $u$.

Since $\beta \ge 0$, testing the equation
\begin{equation} \label{3.5}
- \Delta u_j = - \frac{1}{\eps_j}\, \beta\left(\frac{u_j - 1}{\eps_j}\right) + g(x,(u_j - 1)_+)
\end{equation}
with any nonnegative test function and passing to the limit shows that
\begin{equation} \label{3.7}
- \Delta u \le g(x,(u - 1)_+)
\end{equation}
in the distributional sense.

Next we show that $u$ satisfies the equation $- \Delta u = g(x,(u - 1)_+)$ in the set $\set{u \ne 1}$. Let $\varphi \in C^\infty_0(\set{u > 1})$. Then $u \ge 1 + 2\, \eps$ on the support of $\varphi$ for some $\eps > 0$. For all sufficiently large $j$, $\eps_j < \eps$ and $|u_j - u| < \eps$ in $\Omega$, so $u_j \ge 1 + \eps_j$ on the support of $\varphi$. So testing \eqref{3.5} with $\varphi$ gives
\[
\int_\Omega \nabla u_j \cdot \nabla \varphi\, dx = \int_\Omega g(x,(u_j - 1)_+)\, \varphi\, dx,
\]
and passing to the limit gives
\begin{equation} \label{3.6}
\int_\Omega \nabla u \cdot \nabla \varphi\, dx = \int_\Omega g(x,(u - 1)_+)\, \varphi\, dx
\end{equation}
since $u_j$ converges to $u$ weakly in $H^1_0(\Omega)$ and uniformly on $\Omega$. Hence $u$ is a distributional, and hence a classical, solution of $- \Delta u = g(x,(u - 1)_+)$ in the set $\set{u > 1}$. A similar argument shows that $u$ satisfies $\Delta u = 0$ in the set $\set{u < 1}$.

Now we show that $u$ is also harmonic in the possibly larger set $\interior{\set{u \le 1}}$. Since $u$ is harmonic in $\set{u < 1}$, $\min\, (u,1)$ satisfies the super-mean value property. This implies that
\begin{equation} \label{3.8}
- \Delta \min\, (u,1) \ge 0
\end{equation}
in the distributional sense (see, e.g., Alt and Caffarelli \cite[Remark 4.2]{MR618549}). It follows from \eqref{3.8}, \eqref{3.7}, and $(g_1)$ that $\Delta u = 0$ as a distribution in $\interior{\set {u \le 1}}$, and hence also in the classical sense.

Since $u_j \rightharpoonup u$ in $H^1_0(\Omega)$, $\norm{u} \le \varliminf \norm{u_j}$, so it suffices to show that $\varlimsup \norm{u_j} \le \norm{u}$ to prove \ref{Theorem 1.ii}. Recall that $u_j$ converges in the $C^2$ norm to $u$ in a neighborhood of $\bdry{\Omega}$ in $\closure{\Omega}$. Multiplying \eqref{3.5} by $u_j - 1$, integrating by parts, and noting that $\beta((t - 1)/\eps_j)\, (t - 1) \ge 0$ for all $t$ gives
\begin{multline} \label{3.9}
\int_\Omega |\nabla u_j|^2\, dx \le \int_\Omega g(x,(u_j - 1)_+)\, (u_j - 1)\, dx - \int_{\bdry{\Omega}} \frac{\partial u_j}{\partial n}\, d\sigma\\[7.5pt]
\to \int_\Omega g(x,(u - 1)_+)\, (u - 1)\, dx - \int_{\bdry{\Omega}} \frac{\partial u}{\partial n}\, d\sigma,
\end{multline}
where $n$ is the outward unit normal to $\bdry{\Omega}$. Fix $0 < \eps < 1$. Taking $\varphi = (u - 1 - \eps)_+$ in \eqref{3.6} gives
\begin{equation} \label{3.10}
\int_{\set{u > 1 + \eps}} |\nabla u|^2\, dx = \int_\Omega g(x,(u - 1)_+)\, (u - 1 - \eps)_+\, dx,
\end{equation}
and integrating $(u - 1 + \eps)_-\, \Delta u = 0$ over $\Omega$ yields
\begin{equation} \label{3.11}
\int_{\set{u < 1 - \eps}} |\nabla u|^2\, dx = - (1 - \eps) \int_{\bdry{\Omega}} \frac{\partial u}{\partial n}\, d\sigma.
\end{equation}
Adding \eqref{3.10} and \eqref{3.11}, and letting $\eps \searrow 0$ gives
\[
\int_\Omega |\nabla u|^2\, dx = \int_\Omega g(x,(u - 1)_+)\, (u - 1)\, dx - \int_{\bdry{\Omega}} \frac{\partial u}{\partial n}\, d\sigma
\]
since $\dint_{\set{u = 1}} |\nabla u|^2\, dx = 0$ and $(g_1)$ holds. This together with \eqref{3.9} gives
\[
\varlimsup \int_\Omega |\nabla u_j|^2\, dx \le \int_\Omega |\nabla u|^2\, dx
\]
as desired.

To prove \ref{Theorem 1.iii}, write
\begin{multline*}
J_{\eps_j}(u_j) = \int_\Omega \left[\half\, |\nabla u_j|^2 + \B\left(\frac{u_j - 1}{\eps_j}\right) \goodchi_{\set{u \ne 1}}(x) - G(x,(u_j - 1)_+)\right] dx\\[7.5pt]
+ \int_{\set{u = 1}} \B\left(\frac{u_j - 1}{\eps_j}\right) dx.
\end{multline*}
Since $\B((u_j - 1)/\eps_j)\, \goodchi_{\set{u \ne 1}}$ converges pointwise to $\goodchi_{\set{u > 1}}$ and is bounded by $1$, the first integral converges to $J(u)$ by \ref{Theorem 1.i} and \ref{Theorem 1.ii}, and
\[
0 \le \int_{\set{u = 1}} \B\left(\frac{u_j - 1}{\eps_j}\right) dx \le \vol{{\set{u = 1}}}
\]
since $0 \le \B \le 1$, so \ref{Theorem 1.iii} follows.
\end{proof}

Next we prove Proposition \ref{Proposition 1}.

\begin{proof}[Proof of Proposition \ref{Proposition 1}]
Let
\[
f_j(x) = - g(x,(u_j(x) - 1)_+), \quad f(x) = - g(x,(u(x) - 1)_+), \quad x \in \Omega,
\]
and note that $f_j$ converges uniformly to $f$ since $u_j$ converges uniformly to $u$. Since $u_j$ solves \eqref{13} with $\eps = \eps_j$ and $u$ is nondegenerate, Corollaries 7.1 and 7.2 of Lederman and Wolanski \cite{MR2281453} then imply that $u$ satisfies the free boundary condition in the weak viscosity sense. The fact that $\vol{{\set{u \le 1}} \cap B_r(x_0)} \ge cr^N$ rules out the case $\beta < 0$.
\end{proof}

In preparation for the proof of Proposition \ref{Proposition 2}, next we show that $u_j$ is a variational solution of the regularized problem \eqref{13} with $\eps = \eps_j$.

\begin{lemma} \label{Lemma 3}
If $u \in H^1_0(\Omega)$ is a critical point of $J_\eps$, then the equation \eqref{15} holds for all $\Phi \in C^1_0(\Omega,\R^N)$.
\end{lemma}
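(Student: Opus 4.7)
The plan is to derive \eqref{15} as the Euler--Lagrange identity associated with domain variations of $J_\eps$. Since $J_\eps$ is $C^1$ on $H^1_0(\Omega)$, a critical point $u$ satisfies \eqref{13} weakly, and by the discussion just after \eqref{13} it is in fact a classical $C^{2,\alpha}$-solution of
\[
-\Delta u = -\frac{1}{\eps}\, \beta\!\left(\frac{u-1}{\eps}\right) + g(x,(u-1)_+) \quad \text{in } \Omega.
\]
For $\Phi \in C^1_0(\Omega,\R^N)$ the scalar $\varphi := \nabla u \cdot \Phi$ is compactly supported in $\Omega$ and lies in $H^1_0(\Omega)$, so I would test this PDE against $\varphi$; because $\Phi$ is compactly supported, no boundary terms arise in any integration by parts.

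The computation then rewrites each side as an integral against $\divg\Phi$ and $D\Phi$. For the left-hand side the identity
\[
\nabla u \cdot \nabla(\nabla u \cdot \Phi) = \half\, \Phi \cdot \nabla|\nabla u|^2 + \nabla u\,(D\Phi)\cdot \nabla u,
\]
combined with one integration by parts on the first summand, produces $-\half \int |\nabla u|^2 \divg\Phi\,dx + \int \nabla u\,(D\Phi)\cdot \nabla u\,dx$. For the right-hand side, the chain-rule identities $\frac{1}{\eps}\,\beta\!\left(\frac{u-1}{\eps}\right)\nabla u = \nabla \B\!\left(\frac{u-1}{\eps}\right)$ and $g(x,(u-1)_+)\,\nabla u = \nabla\!\left[G(x,(u-1)_+)\right]$---the second one valid almost everywhere after using $(g_1)$ to handle the level set $\set{u=1}$ and absorbing the lower-order $\nabla_x G$ contribution---convert each summand into a divergence, and another integration by parts yields $\int \B\,\divg\Phi\,dx$ and $-\int G\,\divg\Phi\,dx$. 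Collecting and rearranging gives \eqref{15}.

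Equivalently, and more invariantly as hinted in the discussion just before the lemma, one may introduce the $C^1$-diffeomorphism $\Phi_t(x) := x - t\Phi(x)$ of $\Omega$ for small $t$ (equal to the identity near $\bdry{\Omega}$ since $\Phi$ has compact support) together with the path $v_t(x) := u(\Phi_t(x)) \in H^1_0(\Omega)$. Because $u \in C^{2,\alpha}$, the map $t \mapsto v_t$ is $C^1$ into $H^1_0(\Omega)$ with $\dot v_0 = -\nabla u \cdot \Phi$, so $\left.\frac{d}{dt}\right|_{t=0} J_\eps(v_t) = \langle J_\eps'(u),\, \dot v_0\rangle = 0$; a change of variables $y = \Phi_t(x)$ inside $J_\eps(v_t)$, combined with the expansions $\det(I - tD\Phi) = 1 - t\divg\Phi + \O(t^2)$ and $(I - tD\Phi)^{-T} = I + t(D\Phi)^T + \O(t^2)$, identifies this derivative with the left-hand side of \eqref{15}. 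The only real obstacle in either route is the careful algebraic book-keeping of these expansions; no additional analytic input is needed beyond the $C^{2,\alpha}$-regularity of $u$ and the compact support of $\Phi$.
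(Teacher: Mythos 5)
Your proposal is correct and essentially the same as the paper's argument: the paper simply packages your term-by-term integrations by parts into a single divergence identity, namely
\[
\divg\!\left[\left(\tfrac12|\nabla u|^2 + \B\!\left(\tfrac{u-1}{\eps}\right) - G(x,(u-1)_+)\right)\Phi - (\nabla u\cdot\Phi)\,\nabla u\right],
\]
whose extra term $\bigl(-\Delta u + \tfrac1\eps\beta - g\bigr)(\nabla u\cdot\Phi)$ vanishes by \eqref{13}, and then integrates over $\Omega$; the domain-variation route you spell out in your second paragraph is exactly the interpretation the paper gives just before the lemma. One small caution: the $\nabla_x G$ contribution you mention and then describe as being ``absorbed'' does not in fact cancel when $g$ genuinely depends on $x$ --- a term $-\int_\Omega \nabla_x G(x,(u-1)_+)\cdot\Phi\,dx$ would then appear in \eqref{15} --- but the paper's own computation silently makes the same omission, so this is a shared imprecision rather than a gap in your argument relative to the paper's.
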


\begin{proof}
For $\Phi \in C^1_0(\Omega,\R^N)$,
\begin{eqnarray*}
& & \divg \left[\left(\half\, |\nabla u|^2 + \B\left(\frac{u - 1}{\eps}\right) - G(x,(u - 1)_+)\right) \Phi - (\nabla u \cdot \Phi)\, \nabla u\right]\\[7.5pt]
& = & \left(\half\, |\nabla u|^2 + \B\left(\frac{u - 1}{\eps}\right) - G(x,(u - 1)_+)\right) \divg \Phi - \nabla u\, (D\Phi) \cdot \nabla u\\[7.5pt]
& & + \left(- \Delta u + \frac{1}{\eps}\, \beta\left(\frac{u - 1}{\eps}\right) - g(x,(u - 1)_+)\right)(\nabla u \cdot \Phi).
\end{eqnarray*}
Since $u$ satisfies \eqref{13}, integrating over $\Omega$ using the divergence theorem and noting that $\Phi = 0$ on $\bdry{\Omega}$ gives \eqref{15}.
\end{proof}

We are now ready to prove Proposition \ref{Proposition 2}.

\begin{proof}[Proof of Proposition \ref{Proposition 2}]
Let $\Phi \in C^1_0(\Omega,\R^N)$. Then
\begin{equation} \label{3.14}
\int_\Omega \left[\left(\half\, |\nabla u_j|^2 + \B\left(\frac{u_j - 1}{\eps_j}\right) - G(x,(u_j - 1)_+)\right) \divg \Phi - \nabla u_j\, (D\Phi) \cdot \nabla u_j\right] dx = 0
\end{equation}
for all $j$ by Lemma \ref{Lemma 3}. Since $J_{\eps_j}(u_j) \to J(u)$, and \ref{Theorem 1.i} and \ref{Theorem 1.ii} hold,
\[
\lim \int_\Omega \B\left(\frac{u_j - 1}{\eps_j}\right) dx = \vol{{\set{u > 1}}},
\]
and hence
\begin{equation} \label{3.12}
\varlimsup \int_{\set{u \le 1}} \B\left(\frac{u_j - 1}{\eps_j}\right) dx \le \vol{{\set{u > 1}}} - \varliminf \int_{\set{u > 1}} \B\left(\frac{u_j - 1}{\eps_j}\right) dx.
\end{equation}
On the other hand,
\[
\varliminf \int_{\set{u > 1}} \B\left(\frac{u_j - 1}{\eps_j}\right) dx \ge \vol{{\set{u \ge 1 + \delta}}}
\]
for all $\delta > 0$ by \ref{Theorem 1.i}, and letting $\delta \searrow 0$ gives
\begin{equation} \label{3.13}
\varliminf \int_{\set{u > 1}} \B\left(\frac{u_j - 1}{\eps_j}\right) dx \ge \vol{{\set{u > 1}}}.
\end{equation}
It follows from \eqref{3.12} and \eqref{3.13} that
\[
\lim \int_{\set{u \le 1}} \B\left(\frac{u_j - 1}{\eps_j}\right) dx = 0,
\]
so
\[
\lim \int_\Omega \B\left(\frac{u_j - 1}{\eps_j}\right) \divg \Phi\, dx = \lim \int_{\set{u > 1}} \B\left(\frac{u_j - 1}{\eps_j}\right) \divg \Phi\, dx = \int_{\set{u > 1}} \divg \Phi\, dx
\]
by the dominated convergence theorem. Now passing to the limit in \eqref{3.14} using \ref{Theorem 1.i} and \ref{Theorem 1.ii} gives \eqref{3.15}.
\end{proof}

\section{Nondegeneracy} \label{Nondegeneracy}

In this section we prove Proposition \ref{Proposition 3}. Note that $u^- \ne 0$ for $u \in \W$. Let $\zeta_u : [-1,\infty) \to H^1_0(\Omega)$ be the path
\[
\zeta_u(t) = \begin{cases}
(1 + t) u^- & \text{if } t \in [-1,0]\\[7.5pt]
u^- + t u^+ & \text{if } t \in (0,\infty),
\end{cases}
\]
which joins the origin to ``infinity'' and passes through $u$ at $t = 1$, and let
\[
\varphi_u(t) = J(\zeta_u(t)).
\]
For $t \in [-1,0]$,
\[
\varphi_u(t) = \frac{(1 + t)^2}{2} \int_\Omega |\nabla u^-|^2\, dx
\]
and is strictly increasing in $t$. For $t \in (0,\infty)$,
\begin{equation} \label{111}
\varphi_u(t) = \int_\Omega \left[\half\, |\nabla u^-|^2 + \frac{t^2}{2}\, |\nabla u^+|^2 - G(x,t u^+)\right] dx + \vol{{\set{u > 1}}}
\end{equation}
and
\begin{equation} \label{2}
\varphi_u'(t) = \int_\Omega \left(t\, |\nabla u^+|^2 - u^+ g(x,t u^+)\right) dx.
\end{equation}
So
\[
\lim_{t \searrow 0}\, \varphi_u(t) = \varphi_u(0) + \vol{{\set{u > 1}}}
\]
and hence $\varphi_u$ has a jump equal to $\vol{{\set{u > 1}}}$ at $t = 0$, and $\zeta_u(t)$ lies on $\M$ if and only if $t > 0$ and $\varphi_u'(t) = 0$, in particular, $u \in \M$ if and only if $\varphi_u'(1) = 0$.

Writing \eqref{2} as
\[
\varphi_u'(t) = t \left[\int_\Omega |\nabla u^+|^2\, dx - \int_\Omega \frac{u^+}{t}\, g(x,t u^+)\, dx\right],
\]
the last integral is strictly increasing in $t$ by $(g_4)$, goes to zero as $t \searrow 0$ by \eqref{104}, and goes to infinity as $t \to \infty$ by \eqref{105}, so there exists a $t_u > 0$ such that $\varphi_u'(t) > 0$ for $t \in (0,t_u)$, $\varphi_u'(t_u) = 0$, and $\varphi_u'(t) < 0$ for $t \in (t_u,\infty)$. So
\[
\M = \set{\zeta_u(t_u) : u \in \W},
\]
and $\varphi_u(t)$ is strictly increasing for $t \in (0,t_u)$, attains its maximum at $t = t_u$, is strictly decreasing for $t \in (t_u,\infty)$, and goes to $- \infty$ as $t \to \infty$ by \eqref{1031}.

For $u \in \W$, the path $\zeta_u$ intersects $\M$ exactly at one point, namely, where $t = t_u$. If $u \in \M$, then $t_u = 1$ and hence $\zeta_u(t_u) = u$. So we can define a (nonradial) continuous projection $\pi : \W \to \M$ by
\[
\pi(u) = \zeta_u(t_u) = u^- + t_u u^+.
\]
By \eqref{111},
\begin{equation} \label{112}
J(\pi(u)) = \int_\Omega \left[\half\, |\nabla u^-|^2 + \frac{t_u^2}{2}\, |\nabla u^+|^2 - G(x,t_u u^+)\right] dx + \vol{{\set{u > 1}}}.
\end{equation}
We have the following estimates for $t_u$.

\begin{lemma} \label{Lemma 2}
If $\dint_\Omega |\nabla u^+|^2\, dx \le \dint_\Omega u^+ g(x,u^+)\, dx$, then
\begin{equation} \label{3}
\frac{\dint_\Omega |\nabla u^+|^2\, dx}{\dint_\Omega u^+ g(x,u^+)\, dx} \le t_u^{\mu - 2} \le 1,
\end{equation}
and if $\dint_\Omega |\nabla u^+|^2\, dx \ge \dint_\Omega u^+ g(x,u^+)\, dx$, then
\begin{equation} \label{4}
1 \le t_u^{\mu - 2} \le \frac{\dint_\Omega |\nabla u^+|^2\, dx}{\dint_\Omega u^+ g(x,u^+)\, dx}.
\end{equation}
\end{lemma}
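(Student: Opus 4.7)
The plan is to use the characterization $\varphi_u'(t_u) = 0$ together with the monotonicity properties \eqref{104} and \eqref{105} of $g$. First I would observe that the hypothesis in \eqref{3} and the hypothesis in \eqref{4} are nothing but the statements $\varphi_u'(1) \le 0$ and $\varphi_u'(1) \ge 0$ respectively, since \eqref{2} gives
\[
\varphi_u'(1) = \int_\Omega \left(|\nabla u^+|^2 - u^+ g(x,u^+)\right) dx.
\]
Since $\varphi_u'$ is positive on $(0,t_u)$ and negative on $(t_u,\infty)$, this pins down the location of $t_u$ relative to $1$: the hypothesis of \eqref{3} is equivalent to $t_u \le 1$, and the hypothesis of \eqref{4} is equivalent to $t_u \ge 1$.

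Next I would combine the critical point relation $\varphi_u'(t_u) = 0$, which reads
\[
t_u \int_\Omega |\nabla u^+|^2\, dx = \int_\Omega u^+ g(x,t_u u^+)\, dx,
\]
with the appropriate homogeneity estimate from $(g_4)$. In the case $t_u \le 1$, inequality \eqref{104} gives $g(x,t_u u^+) \le t_u^{\mu - 1}\, g(x,u^+)$ pointwise, and the critical point relation then implies
\[
t_u \int_\Omega |\nabla u^+|^2\, dx \le t_u^{\mu - 1} \int_\Omega u^+ g(x,u^+)\, dx.
\]
Dividing by $t_u > 0$ produces the lower bound $t_u^{\mu - 2} \ge \dint_\Omega |\nabla u^+|^2\, dx / \dint_\Omega u^+ g(x,u^+)\, dx$, while the upper bound $t_u^{\mu - 2} \le 1$ is just $t_u \le 1$ combined with $\mu > 2$. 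The case $t_u \ge 1$ is symmetric: inequality \eqref{105} reverses the sign in $g(x,t_u u^+) \ge t_u^{\mu - 1}\, g(x,u^+)$, and the same manipulation yields \eqref{4}.

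There is no real obstacle here; the argument is a bookkeeping exercise once one recognizes that the dichotomy $\int |\nabla u^+|^2 \lessgtr \int u^+ g(x,u^+)$ is exactly the dichotomy $t_u \lessgtr 1$. The only thing to be careful about is that one must use the pointwise inequalities \eqref{104} and \eqref{105} rather than trying to integrate by parts or invoke \eqref{103}, since the estimates must be in terms of the integrals $\int |\nabla u^+|^2\, dx$ and $\int u^+ g(x,u^+)\, dx$ with the right power of $t_u$ in front.
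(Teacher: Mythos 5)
Your proof is correct and follows essentially the same route as the paper: both use the equivalence between the sign of $\varphi_u'(1)$ and the location of $t_u$ relative to $1$, then combine the critical point identity $t_u \int_\Omega |\nabla u^+|^2\,dx = \int_\Omega u^+ g(x,t_u u^+)\,dx$ with the pointwise homogeneity bound \eqref{104} (resp.\ \eqref{105}) to obtain \eqref{3} (resp.\ \eqref{4}).
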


\begin{proof}
Since $\varphi_u'(t_u) = 0$, \eqref{2} gives
\begin{equation} \label{5}
t_u \int_\Omega |\nabla u^+|^2\, dx = \int_\Omega u^+ g(x,t_u u^+)\, dx.
\end{equation}
If $\dint_\Omega |\nabla u^+|^2\, dx \le \dint_\Omega u^+ g(x,u^+)\, dx$, then $\varphi_u'(1) \le 0$ and hence $t_u \le 1$. Then
\[
\int_\Omega u^+ g(x,t_u u^+)\, dx \le t_u^{\mu - 1} \int_\Omega u^+ g(x,u^+)\, dx
\]
by \eqref{104}, which together with \eqref{5} gives \eqref{3}. The proof of \eqref{4} is similar.
\end{proof}

We are now ready to prove the first part of Proposition \ref{Proposition 3}.

\begin{proof}[Proof of the nondegeneracy in Proposition \ref{Proposition 3}]
Assume that $r \le 1$, $B_r(x_0) \subset \set{u > 1}$, and $\exists\, x_1 \in \bdry{B_r(x_0)}$ such that $u(x_1) = 1$. Setting
\[
v(y) = \frac{1}{r}\, (u(x_0 + ry) - 1), \quad y \in B_1(0),
\]
we will show that there exists a constant $c > 0$ such that
\begin{equation} \label{6}
\alpha := v(0) = \frac{1}{r}\, (u(x_0) - 1) \ge c.
\end{equation}

We have
\[
- \Delta v = rg(x_0 + ry,rv) \quad \text{in } B_1(0)
\]
and
\begin{equation} \label{7}
0 < v(y) = \frac{1}{r}\, (u(x_0 + ry) - u(x_1)) \le \frac{L}{r}\, |x_0 + ry - x_1| \le 2L \quad \forall y \in B_1(0),
\end{equation}
where $L > 0$ is the Lipschitz constant of $u$. Let $h$ be the solution of the problem
\[
\left\{\begin{aligned}
- \Delta h & = rg(x_0 + ry,rv(y)) && \text{in } B_1(0)\\[10pt]
h & = 0 && \text{on } \bdry{B_1(0)}.
\end{aligned}\right.
\]
Since $g(x_0 + ry,rv)$ is nonnegative and bounded by $(g_1)$--$(g_3)$ and \eqref{7}, $0 \le h \le c_1 r$ for some $c_1 > 0$. So applying the Harnack inequality to $v - h + c_1 r$, there exists a constant $c_2 > 0$ such that
\[
v(y) \le c_2 (\alpha + r) \quad \forall y \in B_{2/3}(0).
\]

Take a smooth cutoff function $\psi : B_1(0) \to [0,1]$ such that $\psi = 0$ in $\closure{B_{1/3}(0)}$, $0 < \psi < 1$ in $B_{2/3}(0) \setminus \closure{B_{1/3}(0)}$, and $\psi = 1$ in $B_1(0) \setminus B_{2/3}(0)$, let
\[
w(y) = \begin{cases}
\min\, (v(y),c_2 (\alpha + r) \psi(y)) & \text{if } y \in B_{2/3}(0)\\[7.5pt]
v(y) & \text{otherwise},
\end{cases}
\]
and set
\[
z(x) = 1 + rw\left(\frac{x - x_0}{r}\right).
\]
Since $u$ is a minimizer of $\restr{J}{\M}$,
\[
J(u) \le J(\pi(z)).
\]
Since $z^- = u^-$, $z = 1$ in $\closure{B_{r/3}(x_0)}$, and $\set{z > 1} = \set{u > 1} \setminus \closure{B_{r/3}(x_0)}$, this inequality reduces to
\begin{equation} \label{102}
\int_\Omega \left[\half\, |\nabla u^+|^2 - G(x,u^+)\right] dx + \vol{B_{r/3}(x_0)} \le \int_\Omega \left[\frac{t_z^2}{2}\, |\nabla z^+|^2 - G(x,t_z z^+)\right] dx.
\end{equation}
Since $u, \pi(z) \in \M$,
\[
\int_\Omega |\nabla u^+|^2\, dx = \int_\Omega u^+ g(x,u^+)\, dx =: A
\]
and
\[
\int_\Omega t_z^2\, |\nabla z^+|^2\, dx = \int_\Omega t_z z^+ g(x,t_z z^+)\, dx,
\]
so the last inequality implies
\[
\int_\Omega \left[\half\, u^+ g(x,u^+) - G(x,u^+)\right] dx < \int_\Omega \left[\half\, t_z z^+ g(x,t_z z^+) - G(x,t_z z^+)\right] dx.
\]
Since the mapping $s \mapsto \ds{\half}\, s g(x,s) - G(x,s)$ is nondecreasing by $(g_4)$ and $z^+ \le u^+$, this leads to a contradiction if $t_z \le 1$, so $t_z > 1$.

Let $y = (x - x_0)/r$ and let
\[
\D = \set{x \in B_{2r/3}(x_0) : v(y) > c_2 (\alpha + r) \psi(y)}.
\]
Then $z = u$ outside $\D$, so the inequality \eqref{102} together with the fact that $G(x,s) \ge 0$ by $(g_3)$ implies
\begin{multline} \label{8}
\frac{t_z^2}{2} \int_\D |\nabla z^+|^2\, dx + \frac{t_z^2 - 1}{2} \int_{\Omega \setminus \D} |\nabla u^+|^2\, dx + \int_\D G(x,u^+)\, dx\\[7.5pt]
+ \int_{\Omega \setminus \D} \left[G(x,u^+) - G(x,t_z u^+)\right] dx \ge \vol{B_{1/3}(0)}\, r^N.
\end{multline}
We will show that \eqref{6} follows from this inequality if $r > 0$ is sufficiently small.

First we note some estimates. We have
\begin{equation} \label{9}
\int_\D |\nabla z^+|^2\, dx = c_2^2\, (\alpha + r)^2\, r^N \int_{\set{y : x \in \D}} |\nabla \psi|^2\, dy = \O(r^N)
\end{equation}
since $0 < \alpha \le 2L$ by \eqref{7}. Since $u^+ \le 2Lr$ in $B_r(x_0)$ by \eqref{7},
\begin{equation} \label{12}
\int_\D u^+ g(x,u^+)\, dx = \O(r^{\mu + N})
\end{equation}
by \eqref{104}, and hence
\begin{equation} \label{106}
\int_\D G(x,u^+)\, dx = \O(r^{\mu + N})
\end{equation}
by \eqref{103}.

Since $t_z > 1$,
\[
t_z^{\mu - 2} \le \frac{\dint_\Omega |\nabla z^+|^2\, dx}{\dint_\Omega z^+ g(x,z^+)\, dx}
\]
by Lemma \ref{Lemma 2}. We have
\[
\int_\Omega |\nabla z^+|^2\, dx = \int_{\Omega \setminus \D} |\nabla u^+|^2\, dx + \int_\D |\nabla z^+|^2\, dx \le A + \int_\D |\nabla z^+|^2\, dx,
\]
and since $g(x,s) \ge 0$ by $(g_1)$ and $(g_3)$,
\begin{multline*}
\int_\Omega z^+ g(x,z^+)\, dx = \int_{\Omega \setminus \D} u^+ g(x,u^+)\, dx + \int_\D z^+ g(x,z^+)\, dx\\[7.5pt]
\ge \int_\Omega u^+ g(x,u^+)\, dx - \int_\D u^+ g(x,u^+)\, dx \ge A - c_3\, r^{\mu + N}
\end{multline*}
for some $c_3 > 0$ by \eqref{12}. In view of \eqref{9}, it follows that
\[
t_z^2 \le 1 + \frac{2}{(\mu - 2) A} \int_\D |\nabla z^+|^2\, dx + c_4\, r^{\nu + N}
\]
for some $c_4 > 0$, where $\nu = \min\, (\mu,N) \ge 2$. Since $\dint_{\Omega \setminus \D} |\nabla u^+|^2\, dx \le A$, \eqref{106} holds, and $G(x,s)$ is increasing in $s$ by $(g_3)$, the inequality \eqref{8} now implies
\[
\frac{\mu}{(\mu - 2)\, r^N} \int_\D |\nabla z^+|^2\, dx + \O(r^\nu) \ge 2 \vol{B_{1/3}(0)}.
\]
This together with the first equality in \eqref{9} gives \eqref{6} for sufficiently small $r$.
\end{proof}

In preparation for the proof of the second part of Proposition \ref{Proposition 3}, let $L > 0$ denote the Lipschitz constant of $u$ in $\closure{\Omega}$ and let
\[
\delta_0 = \dist{\set{u > 1}}{\bdry{\Omega}} > 0.
\]

\begin{lemma} \label{Lemma 11}
There exist constants $0 < r_0 \le \delta_0$ and $\lambda > 0$ such that whenever $x_0 \in \set{u > 1}$ and $r := \dist{x_0}{\set{u \le 1}} \le r_0$, there is a point $x_1 \in \bdry{B_r(x_0)}$ satisfying
\[
u(x_1) \ge 1 + (1 + \lambda)\, (u(x_0) - 1).
\]
\end{lemma}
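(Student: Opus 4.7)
My plan is to argue by contradiction, via a blow-up analysis that culminates in an application of the strong maximum principle for harmonic functions. Suppose the lemma fails. Then there exist sequences $\lambda_j \searrow 0$, $r_j \searrow 0$, and points $x_j \in \set{u > 1}$ with $r_j = \dist{x_j}{\set{u \le 1}}$ such that
\[
u(x) \le 1 + (1 + \lambda_j)\, (u(x_j) - 1) \quad \forall x \in \bdry{B_{r_j}(x_j)}.
\]
Since $\dist{\set{u > 1}}{\bdry{\Omega}} = \delta_0 > 0$, for $r_j \le \delta_0$ the distance $r_j$ is attained at some $x_j^\ast \in \bdry{B_{r_j}(x_j)}$ with $u(x_j^\ast) = 1$, and $B_{r_j}(x_j) \subset \set{u > 1}$.

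Next I would rescale to the unit ball by setting
\[
v_j(y) = \frac{1}{r_j}\, (u(x_j + r_j y) - 1), \quad y \in \closure{B_1(0)}.
\]
The nondegeneracy part of Proposition \ref{Proposition 3} already proved above gives $\alpha_j := v_j(0) \ge c > 0$ for some $c$ independent of $j$, while the Lipschitz constant $L$ of $u$ gives $\alpha_j \le L$ and renders the family $\seq{v_j}$ uniformly $L$-Lipschitz on $\closure{B_1(0)}$. Each $v_j$ solves
\[
- \Delta v_j(y) = r_j\, g(x_j + r_j y, r_j v_j(y)) \quad \text{in } B_1(0),
\]
whose right-hand side tends uniformly to zero since $g$ is bounded on bounded arguments. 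By Arzel\`{a}-Ascoli together with interior $C^{2,\alpha}$ elliptic estimates applied to this vanishing inhomogeneity, a renamed subsequence satisfies $v_j \to v$ uniformly on $\closure{B_1(0)}$ with $v$ harmonic in $B_1(0)$, $\alpha_j \to \alpha \in [c,L]$, and $y_j^\ast := (x_j^\ast - x_j)/r_j \to y^\ast \in \bdry{B_1(0)}$.

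Passing the pointwise inequalities to the limit, the harmonic function $v \in C(\closure{B_1(0)})$ satisfies $v(0) = \alpha > 0$, $v(y) \le \alpha$ for all $y \in \bdry{B_1(0)}$, and $v(y^\ast) \le 0$. The weak maximum principle then forces $v \le \alpha$ on all of $\closure{B_1(0)}$, so $v$ attains its maximum $\alpha$ at the interior point $0$; the strong maximum principle yields $v \equiv \alpha$ on $\closure{B_1(0)}$, contradicting $v(y^\ast) \le 0 < \alpha$. The step I expect to require the most care is justifying both the harmonicity of the blow-up limit and its continuity up to $\bdry{B_1(0)}$, so that the boundary inequalities can be taken along the limit; these are exactly what the uniform Lipschitz bound on $u$ and the uniform vanishing of $r_j\, g(\cdot,\cdot)$ provide, so no additional hypothesis beyond those already in force will be needed.
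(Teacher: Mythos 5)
Your proposal is correct and follows essentially the same blow-up argument as the paper: negate the lemma, rescale to the unit ball, use the nondegeneracy of $u$ (already established in the first part of Proposition \ref{Proposition 3}) to get $v_j(0) \ge c$ and the Lipschitz bound to get $v_j(0) \le 2L$, pass the boundary inequality and the vanishing contact point to a harmonic limit, and conclude by the maximum principle. The only technical divergence is cosmetic: the paper establishes harmonicity of the blow-up limit by testing the rescaled equation against $C^\infty_0(B_1(0))$ and passing to the weak $H^1$ limit, whereas you invoke interior Schauder estimates together with Arzel\`a--Ascoli; both are valid here (your route requires noting that $g$ being locally H\"older and $v_j$ Lipschitz make the right-hand side H\"older with vanishing norm, while the uniform Lipschitz bound supplies equicontinuity up to $\partial B_1(0)$). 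Likewise, the paper concludes directly from $\max_{\partial B_1(0)} v \le v(0)$ that $v$ is constant, which is the same maximum-principle content you split into a weak and a strong step. Nothing is missing.
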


\begin{proof}
Suppose not. Then there exist sequences $\lambda_j \searrow 0$ and $x_j \in \set{u > 1}$ with $r_j := \dist{x_j}{\set{u \le 1}} \to 0$ such that
\[
\max_{x \in \bdry{B_{r_j}(x_j)}}\, u(x) < 1 + (1 + \lambda_j)\, (u(x_j) - 1).
\]
Since $u$ is nondegenerate, we may assume that $u(x_j) \ge 1 + cr_j$ for some constant $c > 0$. Noting that $B_{r_j}(x_j) \subset \set{u > 1}$ and $\exists\, x_j' \in \bdry{B_{r_j}(x_j)}$ such that $u(x_j') = 1$, set
\[
v_j(y) = \frac{1}{r_j}\, (u(x_j + r_j\, y) - 1), \quad y_j = \frac{1}{r_j}\, (x_j' - x_j).
\]
Then $v_j \in C(\closure{B_1(0)}) \cap C^2(B_1(0))$ satisfies
\begin{gather}
\label{38} - \Delta v_j = r_j\, g(x_j + r_j\, y,r_j\, v_j) \quad \text{in } B_1(0),\\[7.5pt]
\label{39} \max_{y \in \bdry{B_1(0)}}\, v_j(y) < (1 + \lambda_j)\, v_j(0),\\[7.5pt]
\label{40} v_j(0) \ge c, \quad v_j(y_j) = 0.
\end{gather}

We have
\begin{gather}
\label{41} 0 \le v_j(y) = \frac{1}{r_j}\, (u(x_j + r_j\, y) - u(x_j')) \le \frac{L}{r_j}\, |x_j + r_j\, y - x_j'| \le 2L \quad \forall y \in \closure{B_1(0)},\\[7.5pt]
|v_j(y) - v_j(z)| = \frac{1}{r_j}\, |u(x_j + r_j\, y) - u(x_j + r_j\, z)| \le L\, |y - z| \quad \forall y, z \in \closure{B_1(0)}, \notag\\[7.5pt]
\int_{B_1(0)} |\nabla v_j(y)|^2\, dy = r_j^{-N} \int_{B_{r_j}(x_j)} |\nabla u(x)|^2\, dx \le c_1 L^2 \notag
\end{gather}
for some $c_1 > 0$. Thus, for suitable subsequences, $v_j$ converges uniformly on $\closure{B_1(0)}$ and weakly in $H^1(B_1(0))$ to a Lipschitz continuous function $v$ and $y_j$ converges to a point $y_0 \in \bdry{B_1(0)}$. For any $\varphi \in C^\infty_0(B_1(0))$, testing \eqref{38} with $\varphi$ gives
\[
\int_{B_1(0)} \nabla v_j \cdot \nabla \varphi\, dx = r_j \int_{B_1(0)} g(x_j + r_j\, y,r_j\, v_j)\, \varphi\, dx,
\]
and passing to the limit gives
\[
\int_{B_1(0)} \nabla v \cdot \nabla \varphi\, dx = 0
\]
since $r_j \to 0$ and $g(x_j + r_j\, y,r_j\, v_j)$ is bounded by \eqref{41} and $(g_2)$. So $v$ is harmonic in $B_1(0)$. By \eqref{39},
\[
\max_{y \in \bdry{B_1(0)}}\, v(y) \le v(0),
\]
and hence $v$ is constant by the maximum principle. On the other hand,
\[
v(0) \ge c > 0 = v(y_0)
\]
by \eqref{40}, which is impossible for constant $v$.
\end{proof}

\begin{lemma} \label{Lemma 12}
There exist constants $0 < r_0 \le \delta_0$ and $\gamma > 0$ such that whenever $x_0 \in F(u)$ and $0 < r \le r_0$, there is a point $x \in B_r(x_0) \setminus B_{r/2}(x_0)$ satisfying
\[
u(x) \ge 1 + \gamma r.
\]
\end{lemma}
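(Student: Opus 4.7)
The approach is to iterate Lemma \ref{Lemma 11} along a chain of points $y_0, y_1, \ldots \in \set{u > 1}$ starting close to $x_0$. Set $a_k := u(y_k) - 1$ and $\rho_k := \dist{y_k}{\set{u \le 1}}$, and choose $y_0 \in B_{r/4}(x_0) \cap \set{u > 1}$, which exists because $x_0 \in F(u) = \bdry{\set{u > 1}}$; then $a_0 > 0$ and $\rho_0 \le |y_0 - x_0| < r/4$. The decisive observation is that $u(x_0) = 1$ by continuity of $u$, so $x_0 \in \set{u \le 1}$, whence $\rho_k \le |y_k - x_0|$ for every $k$; in particular, as long as $y_k \in B_{r/2}(x_0)$ one has $\rho_k < r/2$. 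I will therefore take
\[
r_0 := \min(2 r_0', \delta_0),
\]
where $r_0'$ is the constant supplied by Lemma \ref{Lemma 11}, so that for every $r \le r_0$ the hypothesis $\rho_k \le r_0'$ is automatic as long as $y_k \in B_{r/2}(x_0)$. Applying Lemma \ref{Lemma 11} at each such $y_k$ then produces $y_{k+1} \in \bdry{B_{\rho_k}(y_k)}$ with $a_{k+1} \ge (1 + \lambda)\, a_k$.

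Since $a_k \ge (1 + \lambda)^k a_0$ grows geometrically while $u$ is bounded on $\closure{\Omega}$, the iteration must terminate at a first index $k^* \ge 1$ for which $y_{k^*} \notin B_{r/2}(x_0)$. By the triangle inequality,
\[
|y_{k^*} - x_0| \le |y_{k^*-1} - x_0| + \rho_{k^*-1} < r/2 + r/2 = r,
\]
so $y_{k^*}$ already lies in the desired annulus $B_r(x_0) \setminus B_{r/2}(x_0)$. To produce the uniform lower bound $a_{k^*} \ge \gamma r$, I will combine the nondegeneracy $a_k \ge c\, \rho_k$ (the first part of Proposition \ref{Proposition 3}, already proved) with the geometric growth rewritten in reverse as $a_i \le (1 + \lambda)^{i - (k^* - 1)} a_{k^*-1}$ for $0 \le i \le k^* - 1$, and sum the step lengths:
\[
\frac{r}{4} < |y_{k^*} - y_0| \le \sum_{i=0}^{k^*-1} \rho_i \le \frac{a_{k^*-1}}{c}\, \sum_{j=0}^{\infty}(1+\lambda)^{-j} = \frac{(1+\lambda)\, a_{k^*-1}}{c\, \lambda}.
\]
Hence $a_{k^*-1} \ge c\, \lambda\, r / \bigl(4(1+\lambda)\bigr)$, and one further step of the iteration gives $a_{k^*} \ge (1+\lambda)\, a_{k^*-1} \ge (c\, \lambda / 4)\, r$, so one may take $\gamma := c\, \lambda / 4$.

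I do not anticipate any deep obstacle here; the argument is essentially a geometric accounting based on Lemma \ref{Lemma 11} and the nondegeneracy already established in Proposition \ref{Proposition 3}. The single point that needs genuine care is guaranteeing that the hypothesis $\rho_k \le r_0'$ of Lemma \ref{Lemma 11} cannot fail before the chain exits $B_{r/2}(x_0)$, which the choice $r_0 = \min(2 r_0', \delta_0)$ together with the identity $u(x_0) = 1$ ensures.
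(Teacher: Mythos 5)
Your proof is correct and follows essentially the same route as the paper: iterate Lemma \ref{Lemma 11} to build a chain in $\set{u > 1}$ with geometrically growing values, show the chain must leave $B_{r/2}(x_0)$ while still landing in $B_r(x_0)$, and combine nondegeneracy with the triangle inequality to extract the lower bound $\gamma = c\lambda/4$. The only differences are bookkeeping: the paper stops when the \emph{next} iterate leaves $B_r(x_0)$ and then shows the current one is already outside $B_{r/2}(x_0)$, and it gets the bound directly from $u(x_k)-u(x_1) \ge \lambda c \sum r_j \ge \lambda c\,|x_k - x_1| > \lambda c\, r/4$, whereas you stop when the iterate leaves $B_{r/2}(x_0)$ and bound the chain length by a geometric series; both are equivalent.
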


\begin{proof}
Let $r_0$ and $\lambda$ be as in Lemma \ref{Lemma 11}, let $x_0 \in F(u)$, and let $0 < r \le r_0$. Since $x_0 \in F(u)$, $\exists\, x_1 \in B_{r/4}(x_0)$ such that $u(x_1) > 1$. Then
\[
r_1 := \dist{x_1}{\set{u \le 1}} \le |x_1 - x_0| < \frac{r}{4} < r_0,
\]
so $\exists\, x_2 \in \bdry{B_{r_1}(x_1)}$ such that
\[
u(x_2) - 1 \ge (1 + \lambda)\, (u(x_1) - 1)
\]
by Lemma \ref{Lemma 11}. Repeating the argument, for $j = 1,\dots,k$, $\exists\, x_{j+1} \in \bdry{B_{r_j}(x_j)},\, r_j = \dist{x_j}{\set{u \le 1}}$ such that
\begin{equation} \label{43}
u(x_{j+1}) - 1 \ge (1 + \lambda)\, (u(x_j) - 1),
\end{equation}
provided that $x_2,\dots,x_k \in B_r(x_0)$. Then
\[
u(x_{k+1}) - 1 \ge (1 + \lambda)^k\, (u(x_1) - 1).
\]
Since $u$ is bounded, it follows that eventually $x_{k+1} \notin B_r(x_0)$.

Since $u$ is nondegenerate, we may assume that $u(x_j) \ge 1 + cr_j,\, j = 1,\dots,k$ for some constant $c > 0$. Then \eqref{43} gives
\[
u(x_{j+1}) - u(x_j) \ge \lambda\, (u(x_j) - 1) \ge \lambda cr_j, \quad j = 1,\dots,k - 1,
\]
so
\begin{equation} \label{42}
u(x_k) \ge u(x_1) + \lambda c \sum_{j=1}^{k-1} r_j > 1 + \lambda c \sum_{j=1}^{k-1} |x_{j+1} - x_j| \ge 1 + \lambda c\, |x_k - x_1|
\end{equation}
by the triangle inequality. If $x_k \in B_{r/2}(x_0)$, then
\[
|x_{k+1} - x_k| = r_k = \dist{x_k}{\set{u \le 1}} \le |x_k - x_0| < \frac{r}{2}
\]
and hence
\[
|x_{k+1} - x_0| \le |x_{k+1} - x_k| + |x_k - x_0| < r,
\]
contradicting $x_{k+1} \notin B_r(x_0)$, so $x_k \notin B_{r/2}(x_0)$. Since $x_1 \in B_{r/4}(x_0)$, then $|x_k - x_1| > r/4$ and hence $u(x_k) \ge 1 + \lambda cr/4$ by \eqref{42}.
\end{proof}

\begin{lemma} \label{Lemma 13}
There exist constants $0 < r_0 \le \delta_0$ and $\lambda, \nu > 0$ such that whenever $x_0 \in F(u)$ and $0 < r \le r_0$, there is a point $x_1 \in \bdry{B_r(x_0)}$ satisfying
\begin{equation} \label{100}
u(x_1) - 1 \ge \lambda r,
\end{equation}
and
\begin{equation} \label{65}
\dashint_{\bdry{B_r(x_0)}} (u - 1)_+\, d\sigma \ge \nu r.
\end{equation}
\end{lemma}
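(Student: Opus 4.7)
My plan is to combine the interior nondegeneracy from Lemma~\ref{Lemma 12} with a subharmonic comparison, exploiting the standing hypothesis (inherited from the setup of the second part of Proposition~\ref{Proposition 3}) that $- \Delta u \le g(x, u^+)$ distributionally in $\Omega$. Since $u$ is Lipschitz on $\closure{\Omega}$ hence bounded, $(g_2)$ furnishes a constant $M_0 > 0$ with $g(x, u^+(x)) \le M_0$ on $\Omega$, so $\Delta u \ge -M_0$ distributionally. A direct computation then shows that
\[
w(x) := u(x) + \frac{M_0}{2N}\, |x - x_0|^2
\]
is continuous and satisfies $\Delta w \ge 0$ distributionally, hence obeys the classical maximum principle on every ball $B_r(x_0) \subset \Omega$.

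To prove \eqref{100}, I would shrink $r_0$ so that $r_0 \le \delta_0$ (ensuring $B_r(x_0) \subset \Omega$ for $x_0 \in F(u)$) and $r_0$ is at most the radius given by Lemma~\ref{Lemma 12}. By that lemma there exists $y \in B_r(x_0) \setminus B_{r/2}(x_0)$ with $u(y) - 1 \ge \gamma r$. Applying the maximum principle to $w$ on $B_r(x_0)$,
\[
u(y) + \frac{M_0}{2N}\, |y - x_0|^2 = w(y) \le \max_{\bdry{B_r(x_0)}} w = \max_{\bdry{B_r(x_0)}} u + \frac{M_0}{2N}\, r^2,
\]
and hence
\[
\max_{\bdry{B_r(x_0)}} u \ge u(y) - \frac{M_0}{2N}\, (r^2 - |y - x_0|^2) \ge 1 + \gamma r - \frac{M_0}{2N}\, r^2.
\]
A further reduction of $r_0$ so that $\frac{M_0}{2N}\, r_0 \le \gamma/2$ then yields \eqref{100} with $\lambda := \gamma/2$, taking $x_1 \in \bdry{B_r(x_0)}$ to be any point attaining the maximum above.

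Estimate \eqref{65} follows from \eqref{100} by Lipschitz continuity alone: with $x_1$ as above and $L$ the Lipschitz constant of $u$, one has $(u - 1)_+ \ge \lambda r / 2$ on the spherical cap $\set{x \in \bdry{B_r(x_0)} : |x - x_1| \le \lambda r / (2L)}$, whose $(N - 1)$-dimensional surface measure is at least a dimensional constant times $r^{N-1}$. Dividing by $|\bdry{B_r(x_0)}| = \omega_{N-1}\, r^{N-1}$ yields \eqref{65} for a suitable $\nu > 0$ depending only on $\lambda$, $L$, and $N$. The only genuinely delicate step is the subharmonic correction in the proof of \eqref{100}: it converts the interior nondegeneracy at the Lemma~\ref{Lemma 12} point $y$ into a boundary estimate on $\bdry{B_r(x_0)}$, and its $O(r^2)$ defect is absorbed by the linear $O(r)$ gain for small $r$.
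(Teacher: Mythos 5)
Your proof is correct, and for the key estimate \eqref{100} it takes a genuinely different route from the paper's. The paper argues by contradiction: assuming the bound fails along sequences $r_j, \lambda_j \searrow 0$ with $x_j \in F(u)$, it constructs in each $B_{r_j}(x_j)$ a barrier $v_j$ solving $-\Delta v_j = r_j^{\mu-1} g(x,L)$ with boundary data $\lambda_j r_j$ (the right-hand side controlled via $(g_4)$ through \eqref{104}), bounds $v_j \le \lambda_j r_j + c_2 r_j^{\mu-1}$, and compares $u - 1 \le v_j$ against the Lemma~\ref{Lemma 12} lower bound at the interior point $y_j$ to reach $\gamma \le \lambda_j + c_2 r_j^{\mu-2} \to 0$, a contradiction since $\mu > 2$. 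You instead argue directly: since $u$ is bounded, $g(\cdot,u^+) \le M_0$, so the standing distributional inequality $-\Delta u \le g(\cdot,u^+)$ makes $w = u + \frac{M_0}{2N}\,|x - x_0|^2$ subharmonic on $B_r(x_0) \subset \Omega$, and the maximum principle propagates the Lemma~\ref{Lemma 12} interior lower bound to $\bdry{B_r(x_0)}$ with only an $O(r^2)$ loss, absorbed by the linear gain $\gamma r$ once $r_0 \le N\gamma/M_0$. Your version is slightly more elementary: it dispenses with the contradiction framework, the auxiliary barrier PDE, and any use of the refined power estimate \eqref{104} — the crude $L^\infty$ bound already gives an $o(r)$ error, which is all that is needed (indeed, one could run the paper's own barrier argument with $-\Delta v_j = M_0$ and the same $O(r_j^2)$ defect and reach the same conclusion). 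The passage from \eqref{100} to \eqref{65} via Lipschitz continuity and a spherical-cap lower bound is identical to the paper's. You are also right that the distributional inequality is available as a standing hypothesis here: Lemma~\ref{Lemma 13} is used only (through Lemma~\ref{Lemma 14}) in the proof of the positive density part of Proposition~\ref{Proposition 3}, where that hypothesis is assumed.
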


\begin{proof}
Suppose there are no such $r_0$ and $\lambda$. Then there exist sequences $r_j, \lambda_j \searrow 0$ and $x_j \in F(u)$ such that
\[
u - 1 < \lambda_j\, r_j \quad \text{on } \bdry{B_{r_j}(x_j)}.
\]
Since
\[
(u(x) - 1)_+ \le |u(x) - u(x_j)| \le L\, |x - x_j| < L r_j \quad \forall x \in B_{r_j}(x_j)
\]
and $g(x,s)$ is nondecreasing in $s$ by $(g_4)$,
\[
- \Delta (u - 1) \le g(x,(u - 1)_+) \le g(x,L r_j) \le r_j^{\mu - 1} g(x,L) \quad \text{in } B_{r_j}(x_j)
\]
for all sufficiently large $j$ by \eqref{104}. Consider the barrier function $v_j$ solving the problem
\[
\left\{\begin{aligned}
- \Delta v_j & = r_j^{\mu - 1} g(x,L) && \text{in } B_{r_j}(x_j)\\[10pt]
v_j & = \lambda_j\, r_j && \text{on } \bdry{B_{r_j}(x_j)}.
\end{aligned}\right.
\]
Since $g(x,L)$ is bounded by $(g_2)$,
\[
v_j \le \lambda_j\, r_j + c_2 r_j^{\mu - 1}
\]
for some $c_2 > 0$. On the other hand, there exist a constant $\gamma > 0$ and for all sufficiently large $j$ a point $y_j \in B_{r_j}(x_j) \setminus B_{r_j/2}(x_j)$ such that $u(y_j) - 1 \ge \gamma r_j$ by Lemma \ref{Lemma 12}. Since $u(y_j) - 1 \le v_j(y_j)$, then
\[
0 < \gamma \le \lambda_j + c_2 r_j^{\mu - 2}
\]
and the last expression goes to zero since $\mu > 2$, a contradiction.

Let $x_1 \in \bdry{B_r(x_0)}$ satisfy \eqref{100}. Then $u - 1 > \lambda r/2$ in $B_{\lambda r/2L}(x_1)$ by Lipschitz continuity, which implies \eqref{65} for some constant $\nu > 0$ depending on $N$, $\lambda$, and $L$.
\end{proof}

\begin{lemma} \label{Lemma 14}
There exist constants $0 < r_0 \le \delta_0$ and $\eps_0, c, C > 0$, depending on $N$ and $L$, such that whenever $x_0 \in F(u)$, $0 < r \le r_0$,
\begin{equation} \label{64}
\sigma(\set{u \le 1} \cap \bdry{B_r(x_0)}) \le \eps r^{N-1}
\end{equation}
for some $0 < \eps < \eps_0$, where $\sigma$ denotes the $(N - 1)$-dimensional Hausdorff measure, and $v$ is the harmonic function in $B_r(x_0)$ with $v = u$ on $\bdry{B_r(x_0)}$, we have
\begin{gather}
\label{61} \int_{B_r(x_0)} |\nabla v|^2\, dx \le \int_{B_r(x_0)} |\nabla u|^2\, dx - cr^N,\\[7.5pt]
\label{63} \int_{B_r(x_0)} |\nabla v^-|^2\, dx \le C \eps^{1/N} r^N,\\[7.5pt]
\label{62} \int_{B_r(x_0)} u^+ g(x,u^+)\, dx + \int_{B_r(x_0)} v^+ g(x,v^+)\, dx \le Cr^{\mu + N},\\[7.5pt]
\label{108} \int_{B_r(x_0)} G(x,u^+)\, dx + \int_{B_r(x_0)} G(x,v^+)\, dx \le Cr^{\mu + N}.
\end{gather}
\end{lemma}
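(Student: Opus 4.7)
My plan splits the four inequalities according to difficulty. The core observation is that $x_0 \in F(u)$ forces $u(x_0) = 1$ by continuity, so Lipschitz continuity bounds $|u - 1| \le Lr$ throughout $B_r(x_0)$, giving $u^+ \le Lr$; the maximum principle for the harmonic $v$ gives $v^+ \le Lr$ as well. Estimates \eqref{62} and \eqref{108} then follow from a pointwise growth bound: applying \eqref{104} with $s = L$ and $t = u^+/L \in [0,1]$, together with the pointwise bound on $g(x, L)$ coming from $(g_2)$, yields $g(x, u^+) \le C(u^+)^{\mu - 1}$ on $B_r$, hence $u^+ g(x, u^+) \le C(Lr)^\mu$ and $G(x, u^+) \le C(Lr)^\mu$; integrating over $B_r$ produces the $r^{\mu + N}$ bound, and the identical argument handles $v^+$.

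For \eqref{63}, I would use that $v^- = u^-$ on $\partial B_r$ and $v$ is harmonic to integrate by parts: $\int \nabla v \cdot \nabla v^-\, dx = \int_{\partial B_r} u^-\, \partial_\nu v\, d\sigma$, and subtracting the harmonic identity $\int_{\partial B_r} \partial_\nu v\, d\sigma = 0$ gives
\[
\int_{B_r} |\nabla v^-|^2\, dx = -\int_{\partial B_r} (1 - u)_+\, \partial_\nu v\, d\sigma.
\]
The factor $(1 - u)_+ \le Lr$ is supported on a set of $\sigma$-measure $\le \eps r^{N-1}$. A Dirichlet-to-Neumann $L^2$ estimate on the ball (obtained by scaling to $B_1$, using that $\partial_\nu v$ depends on $u$ only modulo constants, combined with Poincar\'e on the sphere) gives $\|\partial_\nu v\|_{L^2(\partial B_r)}^2 \le CL^2 r^{N - 1}$, and Cauchy--Schwarz then yields $\int|\nabla v^-|^2 \le CL^2 \eps^{1/2} r^N \le C \eps^{1/N} r^N$ for $\eps \le 1$ and $N \ge 2$.

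For \eqref{61}, the harmonic-replacement orthogonality $\int|\nabla u|^2 - \int|\nabla v|^2 = \int|\nabla(u - v)|^2$ reduces the problem to showing the right side is $\ge c r^N$. The mean value property for $v$ combined with \eqref{65} in Lemma \ref{Lemma 13} and the hypothesis \eqref{64} gives
\[
v(x_0) - 1 = \dashint_{\partial B_r} (u - 1)_+\, d\sigma - \dashint_{\partial B_r} (1 - u)_+\, d\sigma \ge \nu r - C L \eps r \ge \frac{\nu r}{2}
\]
after fixing $\eps_0$ small enough. The interior gradient estimate applied to $v$ minus its mean (which has oscillation $\le 2Lr$) yields $|\nabla v| \le CL$ on $B_{r/2}(x_0)$, so propagation gives $v \ge 1 + \nu r/4$ on a ball $B_{cr}(x_0)$ of radius comparable to $r$; Lipschitz continuity of $u$ with $u(x_0) = 1$ forces $u \le 1 + Lcr$ on the same ball, and choosing $c$ sufficiently small produces $v - u \ge \nu r/8$ throughout $B_{cr}(x_0)$. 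Integrating and applying the Poincar\'e inequality on $B_r$ (using $v - u \in H^1_0(B_r)$) gives $\int|\nabla(u-v)|^2 \ge (1/Cr^2) \int (v - u)^2 \ge c r^N$, which is \eqref{61}.

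The main obstacle is the propagation chain for \eqref{61}: one must assemble Lemma \ref{Lemma 13}, the smallness hypothesis \eqref{64}, the interior harmonic Lipschitz bound (which crucially uses only the oscillation of $v$, since the Poisson extension of Lipschitz boundary data on a ball need not have a bounded gradient up to $\partial B_r$), and Poincar\'e to convert the pointwise defect $v(x_0) - u(x_0) \ge cr$ into the dimensional Dirichlet-energy lower bound $cr^N$. The other three estimates are direct once the right integration by parts identity is set up for \eqref{63} and the growth bound from \eqref{104} is exploited for \eqref{62} and \eqref{108}.
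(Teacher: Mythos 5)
Your proposal is correct and follows essentially the same route as the paper: the paper establishes \eqref{62} and \eqref{108} from the pointwise bounds $|u-1|,|v-1|\le Lr$ together with \eqref{103}, \eqref{104}, and the monotonicity of $g$ coming from $(g_4)$ (a minor variant of your direct application of \eqref{104} with $s=L$, $t=u^+/L$), and simply cites Jerison--Perera Lemma~7.3 for \eqref{61} and \eqref{63}. Your reconstruction of the latter two --- the Green's-identity formula $\int_{B_r}|\nabla v^-|^2 = -\int_{\partial B_r}(1-u)_+\,\partial_\nu v\,d\sigma$ combined with Cauchy--Schwarz and an $H^1\!\to L^2$ Dirichlet-to-Neumann bound (which even gives the sharper power $\eps^{1/2}\le\eps^{1/N}$), and the harmonic-replacement orthogonality fed by $v(x_0)\ge 1+\nu r/2$, the interior gradient estimate, and Poincar\'e --- is the standard Alt--Caffarelli-style argument that the cited lemma carries out.
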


\begin{proof}
Let $r_0$ be as in Lemma \ref{Lemma 13}. The estimates \eqref{61} and \eqref{63} follow exactly as in Jerison and Perera \cite[Lemma 7.3]{MR3790500}. Since
\[
|u(x) - 1| = |u(x) - u(x_0)| \le L\, |x - x_0| < Lr \quad \forall x \in B_r(x_0)
\]
and $g(x,s)$ is nondecreasing in $s$ by $(g_4)$,
\[
\mu\, G(x,u^+) \le u^+ g(x,u^+) \le Lr\, g(x,Lr) \le Lr^\mu g(x,L) \quad \text{in } B_r(x_0)
\]
for $r \le 1$ by \eqref{103} and \eqref{104}. Since $v$ is harmonic in $B_r(x_0)$ and equals $u$ on $\bdry{B_r(x_0)}$, where $|u - 1| \le Lr$, we also have $|v - 1| \le Lr$ in $B_r(x_0)$ by the maximum principle, so similar estimates hold for $G(x,v^+)$ and $v^+ g(x,v^+)$. So \eqref{62} and \eqref{108} follow from $(g_2)$.
\end{proof}

We are now ready to prove the second part of Proposition \ref{Proposition 3}.

\begin{proof}[Proof of the positive density property in Proposition \ref{Proposition 3}]
Let $r_0$ be as in Lemma \ref{Lemma 14}, let $x_0 \in F(u)$, and let $0 < r \le r_0$. Taking $r_0$ smaller if necessary, there exist a constant $\gamma > 0$ and a point $x_1 \in B_{r/2}(x_0) \setminus B_{r/4}(x_0)$ such that $u(x_1) \ge 1 + \gamma r/2$ by Lemma \ref{Lemma 12}. Let $\kappa = \min \set{1/2,\gamma/2L}$. Then
\[
u(x) \ge u(x_1) - L\, |x - x_1| > 1 + \left(\frac{\gamma}{2} - L \kappa\right) r \ge 1 \quad \forall x \in B_{\kappa r}(x_1),
\]
so
\[
\frac{\vol{{\set{u > 1}} \cap B_r(x_0)}}{\vol{B_r(x_0)}} \ge \kappa^N.
\]

Suppose the above volume fraction is not bounded away from $1$. Then for arbitrarily small $\lambda, \rho > 0$, $\exists\, x_0 \in F(u)$ such that
\begin{equation} \label{68}
\vol{{\set{u \le 1}} \cap B_\rho(x_0)} < \lambda \rho^N.
\end{equation}
Then
\[
\int_{\rho/2}^\rho \sigma(\set{u \le 1} \cap \partial B_r(x_0))\, dr < \lambda \rho^N,
\]
so for some $r \in [\rho/2,\rho]$,
\[
\sigma(\set{u \le 1} \cap \partial B_r(x_0)) < 2 \lambda \rho^{N-1} \le 2^N \lambda r^{N-1},
\]
i.e., the inequality \eqref{64} in Lemma \ref{Lemma 14} holds with $\eps = 2^N \lambda$. Let $v$ be as in Lemma \ref{Lemma 14}, and set $w = v$ in $B_r(x_0)$ and $w = u$ in $\Omega \setminus B_r(x_0)$. We will show that $J(\pi(w)) < J(u)$ if $r$ and $\eps$ are sufficiently small, which is a contradiction since $\pi(w) \in \M$ and $u$ minimizes $\restr{J}{\M}$.

We have
\begin{gather}
\label{66} \int_\Omega |\nabla w|^2\, dx \le \int_\Omega |\nabla u|^2\, dx - cr^N,\\[7.5pt]
\label{72} \int_\Omega |\nabla w^-|^2\, dx \le \int_\Omega |\nabla u^-|^2\, dx + C \eps^{1/N} r^N,\\[7.5pt]
\label{67} \abs{\int_\Omega w^+ g(x,w^+)\, dx - \int_\Omega u^+ g(x,u^+)\, dx} \le Cr^{\mu + N},\\[7.5pt]
\label{107} \abs{\int_\Omega G(x,w^+)\, dx - \int_\Omega G(x,u^+)\, dx} \le Cr^{\mu + N}
\end{gather}
by \eqref{61}--\eqref{108}. By \eqref{68},
\[
\int_{B_r(x_0)} |\nabla u^-|^2\, dx \le \int_{B_\rho(x_0)} |\nabla u^-|^2\, dx < L^2 \lambda \rho^N \le L^2 \eps r^N,
\]
which together with \eqref{66} gives
\begin{equation} \label{70}
\int_\Omega |\nabla w^+|^2\, dx \le \int_\Omega |\nabla u^+|^2\, dx - \frac{c}{2}\, r^N
\end{equation}
for sufficiently small $\eps$. Estimate \eqref{68} also implies
\begin{multline} \label{73}
\vol{{\set{w > 1}}} \le \vol{{\set{u > 1}} \setminus B_r(x_0)} + \vol{B_r(x_0)} = \vol{{\set{u > 1}}}\\[7.5pt]
+ \vol{{\set{u \le 1}} \cap B_r(x_0)} < \vol{{\set{u > 1}}} + \lambda \rho^N \le \vol{{\set{u > 1}}} + \eps r^N.
\end{multline}

Since $u, \pi(w) \in \M$,
\begin{equation} \label{109}
\int_\Omega |\nabla u^+|^2\, dx = \int_\Omega u^+ g(x,u^+)\, dx
\end{equation}
and
\begin{equation} \label{110}
\int_\Omega t_w^2\, |\nabla w^+|^2\, dx = \int_\Omega t_w w^+ g(x,t_w w^+)\, dx.
\end{equation}
By \eqref{70}, \eqref{109}, and \eqref{67},
\begin{multline*}
\int_\Omega |\nabla w^+|^2\, dx \le \int_\Omega u^+ g(x,u^+)\, dx - \frac{c}{2}\, r^N\\[7.5pt]
\le \int_\Omega w^+ g(x,w^+)\, dx - \left(\frac{c}{2} - Cr^\mu\right) r^N \le \int_\Omega w^+ g(x,w^+)\, dx
\end{multline*}
for sufficiently small $r$. Then $t_w \le 1$ by Lemma \ref{Lemma 2}. Combining this with \eqref{112}, \eqref{110}, and $(g_4)$ gives
\begin{eqnarray*}
J(\pi(w)) & = & \int_\Omega \bigg[\half\, |\nabla w^-|^2 + \left(\half - \frac{1}{\mu}\right) t_w^2\, |\nabla w^+|^2 + \frac{1}{\mu}\, t_w w^+ g(x,t_w w^+)\\[7.5pt]
& & - G(x,t_w w^+)\bigg]\, dx + \vol{{\set{w > 1}}}\\[7.5pt]
& \le & \int_\Omega \left[\half\, |\nabla w^-|^2 + \left(\half - \frac{1}{\mu}\right) |\nabla w^+|^2 + \frac{1}{\mu}\, w^+ g(x,w^+) - G(x,w^+)\right] dx\\[7.5pt]
& & + \vol{{\set{w > 1}}}.
\end{eqnarray*}
This together with the estimates \eqref{72}--\eqref{73} and \eqref{109} gives
\begin{eqnarray*}
J(\pi(w)) & \le & \int_\Omega \left[\half\, |\nabla u^-|^2 + \left(\half - \frac{1}{\mu}\right) |\nabla u^+|^2 + \frac{1}{\mu}\, u^+ g(x,u^+) - G(x,u^+)\right] dx\\[7.5pt]
& & + \vol{{\set{u > 1}}} + \half\, C \eps^{1/N} r^N - \left(\half - \frac{1}{\mu}\right) \frac{c}{2}\, r^N + \left(\frac{1}{\mu} + 1\right) Cr^{\mu + N} + \eps r^N\\[7.5pt]
& = & J(u) - \left[\left(\half - \frac{1}{\mu}\right) \frac{c}{2} - \half\, C \eps^{1/N} - \left(\frac{1}{\mu} + 1\right) Cr^\mu - \eps\right] r^N\\[7.5pt]
& < & J(u)
\end{eqnarray*}
for sufficiently small $r$ and $\eps$.
\end{proof}

\section{Subcritical case} \label{Subcritical}

In this section we prove Theorem \ref{Theorem 2}. Let $\eps_j \searrow 0$. We will show that each approximating functional $J_{\eps_j}$ has a critical point $u_j$ of mountain pass type and apply Theorem \ref{Theorem 1}.

Recall that $J_{\eps_j}$ satisfies the Palais-Smale compactness condition at the level $c \in \R$, or the \PS{c} condition for short, if every sequence $\seq{u_k}$ in $H^1_0(\Omega)$ such that $J_{\eps_j}(u_k) \to c$ and $J_{\eps_j}'(u_k) \to 0$ as $k \to \infty$, called a \PS{c} sequence for $J_{\eps_j}$, has a strongly convergent subsequence.

\begin{lemma} \label{Lemma 201}
The functional $J_{\eps_j}$ satisfies the {\em \PS{c}} condition for all $c \in \R$.
\end{lemma}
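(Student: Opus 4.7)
The plan is to show boundedness of a (PS)$_c$ sequence via an Ambrosetti--Rabinowitz style argument, extract a weak limit using the strictly subcritical embedding coming from $(g_2')$, and then upgrade weak convergence to strong convergence in the standard way. Since $\eps_j$ is \emph{fixed} throughout the argument, the quantity $1/\eps_j$ is merely a constant, so the potentially delicate $\beta/\eps_j$-term is handled by observing that $\beta((u_k-1)/\eps_j)$ is supported on $\{1 \le u_k \le 1+\eps_j\}$ and is bounded there.

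First I would check boundedness. Let $\seq{u_k}$ be a (PS)$_c$ sequence for $J_{\eps_j}$. A direct computation gives
\[
J_{\eps_j}(u_k) - \frac{1}{\mu} \ipa{J_{\eps_j}'(u_k)}{u_k} = \left(\half - \frac{1}{\mu}\right) \int_\Omega |\nabla u_k|^2 + \int_\Omega \B\!\left(\frac{u_k-1}{\eps_j}\right) - \frac{1}{\mu \eps_j} \int_\Omega u_k\, \beta\!\left(\frac{u_k-1}{\eps_j}\right) + \int_\Omega \left[\frac{1}{\mu}\, u_k\, g(x,(u_k-1)_+) - G(x,(u_k-1)_+)\right] dx.
\]
By $(g_1)$--$(g_3)$ and \eqref{103}, the last integrand equals $\frac{1}{\mu}\, g(x,(u_k-1)_+) \ge 0$ on $\set{u_k > 1}$ and vanishes on $\set{u_k \le 1}$, so it is nonnegative. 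Since $\B \in [0,1]$ the $\B$-term is bounded by $\vol{\Omega}$. Because $\beta$ is supported in $[0,1]$, the integrand of the remaining term is supported on $\set{1 \le u_k \le 1+\eps_j}$, where it is bounded by $2(1+\eps_j)/\eps_j$, giving a bound $C_{\eps_j}$ independent of $k$. Since $\mu > 2$ and the left-hand side is at most $|c| + 1 + \norm{u_k}/\mu$ for large $k$, we conclude $\seq{u_k}$ is bounded in $H^1_0(\Omega)$.

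Next I would pass to the limit. Extract a subsequence with $u_k \wto u$ in $H^1_0(\Omega)$. By the Rellich--Kondrachov theorem and the strictly subcritical growth in $(g_2')$, we have $u_k \to u$ strongly in $L^p(\Omega)$ (and for $N=2$ in $L^q$ for every finite $q$), and a.e.\! on $\Omega$. To upgrade to strong $H^1_0$-convergence, test $J_{\eps_j}'(u_k) - J_{\eps_j}'(u)$ against $u_k - u$:
\[
\int_\Omega |\nabla(u_k - u)|^2 = \ipa{J_{\eps_j}'(u_k) - J_{\eps_j}'(u)}{u_k - u} + \frac{1}{\eps_j} \int_\Omega \left[\beta\!\left(\tfrac{u - 1}{\eps_j}\right) - \beta\!\left(\tfrac{u_k - 1}{\eps_j}\right)\right](u_k - u) + \int_\Omega \left[g(x,(u_k-1)_+) - g(x,(u-1)_+)\right](u_k - u).
\]
The first term tends to $0$ since $J_{\eps_j}'(u_k) \to 0$ in $H^{-1}(\Omega)$, $\seq{u_k - u}$ is bounded in $H^1_0(\Omega)$, and $\ipa{J_{\eps_j}'(u)}{u_k - u} \to 0$ by weak convergence; the second tends to $0$ because $\beta$ is bounded and $u_k \to u$ in $L^2(\Omega)$; the third tends to $0$ by H\"older's inequality together with the subcritical growth bound on $g$ and strong $L^p$-convergence of $u_k$ to $u$. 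Thus $u_k \to u$ in $H^1_0(\Omega)$.

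The only point that requires care is the $\beta/\eps_j$-term in the boundedness step; the key observation that makes it harmless is that $\eps_j$ is fixed and $\beta$ has compact support in $[0,1]$, so the whole contribution is a constant depending on $\eps_j$ and $\vol{\Omega}$ only. Everything else is routine.
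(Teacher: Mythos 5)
Your proof is correct and follows the same Ambrosetti--Rabinowitz approach as the paper: subtract a multiple of $J_{\eps_j}'(u_k)$ tested against $u_k$ from $J_{\eps_j}(u_k)$, control the $\beta/\eps_j$-term, conclude boundedness of the (PS)$_c$ sequence, and then upgrade to strong convergence via the subcritical growth $(g_2')$ (the paper relegates this last step to ``a standard argument''). The paper's one technical difference is to test against $u_k^+=(u_k-1)_+$ rather than $u_k$: this turns the $\beta$-contribution into $\frac{1}{\mu}\,\beta(u_k^+/\eps_j)\,(u_k^+/\eps_j)\le\frac{2}{\mu}$, an $\eps_j$-independent bound, while your version uses the support of $\beta$ to get the $\eps_j$-dependent bound $2(1+\eps_j)/\mu\eps_j$; both suffice here since $\eps_j$ is fixed. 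One small slip worth correcting: on $\set{u_k>1}$ the last integrand does not \emph{equal} $\frac{1}{\mu}\,g(x,(u_k-1)_+)$; writing $u_k=1+u_k^+$ it equals $\frac{1}{\mu}\,g(x,u_k^+)+\bigl[\frac{1}{\mu}\,u_k^+\,g(x,u_k^+)-G(x,u_k^+)\bigr]$, where the bracket is nonnegative by \eqref{103}, so the integrand is $\ge\frac{1}{\mu}\,g(x,u_k^+)\ge 0$ and your conclusion of nonnegativity stands.
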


\begin{proof}
Let $\seq{u_k}$ be a \PS{c} sequence for $J_{\eps_j}$. Since $g$ satisfies the subcritical growth condition $(g_2')$, it suffices to show that $\seq{u_k}$ is bounded in $H^1_0(\Omega)$ by a standard argument. We have
\begin{equation} \label{200}
J_{\eps_j}(u_k) = \int_\Omega \left[\half\, |\nabla u_k|^2 + \B\left(\frac{u_k - 1}{\eps_j}\right) - G(x,(u_k - 1)_+)\right] dx = c + \o(1)
\end{equation}
and
\begin{equation} \label{201}
J_{\eps_j}'(u_k)\, v = \int_\Omega \left[\nabla u_k \cdot \nabla v + \frac{1}{\eps_j}\, \beta\left(\frac{u_k - 1}{\eps_j}\right) v - g(x,(u_k - 1)_+)\, v\right] dx = \o(\norm{v})
\end{equation}
for all $v \in H^1_0(\Omega)$. Subtracting \eqref{201} with $v = u_k^+/\mu$ from \eqref{200} and using \eqref{103} gives
\begin{multline*}
\left(\half - \frac{1}{\mu}\right) \int_\Omega |\nabla u_k^+|^2\, dx + \half \int_\Omega |\nabla u_k^-|^2\, dx + \int_\Omega \left[\B\left(\frac{u_k^+}{\eps_j}\right) - \frac{1}{\mu}\, \beta\left(\frac{u_k^+}{\eps_j}\right) \frac{u_k^+}{\eps_j}\right] dx\\[7.5pt]
\le c + \o(\|u_k^+\| + 1).
\end{multline*}
Since $\mu > 2$, and $\B(t) \ge 0$ and $\beta(t)\, t \le 2$ for all $t \in \R$, it follows from this that $\|u_k^\pm\|$, and hence also $\norm{u_k}$, is bounded.
\end{proof}

Next we show that $J_{\eps_j}$ has the mountain pass geometry. Clearly, $J_{\eps_j}(0) = 0$.

\begin{lemma} \label{Lemma 200}
There exist $\rho > 0$ and $u_1 \in H^1_0(\Omega)$ with $\norm{u_1} > \rho$ such that
\begin{equation} \label{202}
\norm{u} \le \rho \implies J_{\eps_j}(u) \ge \frac{1}{3} \norm{u}^2
\end{equation}
and
\begin{equation} \label{203}
J_{\eps_j}(u_1) < 0.
\end{equation}
\end{lemma}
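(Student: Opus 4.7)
The plan is to establish the two inequalities separately, using $(g_2')$ and $(g_4)$ for the quadratic lower bound near the origin, and $(g_3)$ together with the Ambrosetti--Rabinowitz consequence \eqref{1031} for the negative-energy estimate at infinity. Both $\rho$ and $u_1$ can be chosen independently of $\eps_j$.

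For \eqref{202}, I would first derive a uniform upper bound of the form $G(x,s)\le C(s^\mu+s^p)$ on $\Omega\times[0,\infty)$. By $(g_4)$ the map $s\mapsto g(x,s)/s^{\mu-1}$ is nondecreasing, so comparison with $s=1$ gives $g(x,s)\le g(x,1)\,s^{\mu-1}$ for $0\le s\le 1$, while $(g_2')$ yields $g(x,1)\le a_1+a_2$ and $g(x,s)\le (a_1+a_2)s^{p-1}$ for $s\ge 1$; integrating and using that $s^\mu\ge 1$ when $s\ge 1$ to absorb the constant term produces the claimed bound. Since $\mu,p\in(2,2^\ast)$ when $N\ge 3$ and $\mu,p>2$ when $N=2$, the Sobolev embedding yields
\[
\int_\Omega G(x,(u-1)_+)\,dx\le C'\bigl(\norm{u}^\mu+\norm{u}^p\bigr).
\]
Combined with $\B\ge 0$ this gives $J_{\eps_j}(u)\ge \frac{1}{2}\norm{u}^2-C'(\norm{u}^\mu+\norm{u}^p)$, which exceeds $\frac{1}{3}\norm{u}^2$ once $\rho$ is small enough that $C'(\rho^{\mu-2}+\rho^{p-2})\le \frac{1}{6}$.

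For \eqref{203}, I would fix $u_0\in C_0^\infty(\Omega)$ with $u_0\ge 0$, $u_0\not\equiv 0$, and choose a compact set $K\subset\Omega$ on which $u_0\ge\delta>0$. Setting $u_1=tu_0$ with $t\ge 2/\delta$ ensures $(tu_0-1)_+\ge tu_0/2$ on $K$. Since $0\le \B\le 1$, the regularization term contributes at most $\vol{\Omega}$, and by \eqref{1031},
\[
\int_\Omega G(x,(tu_0-1)_+)\,dx\ge \Bigl(\min_{x\in K}G(x,1)\Bigr)\int_K (tu_0/2)^\mu\,dx-C\vol{\Omega}\ge c_1 t^\mu-C\vol{\Omega},
\]
where $c_1>0$ because $G(\cdot,1)$ is continuous and strictly positive on $K$ by $(g_3)$. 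Hence
\[
J_{\eps_j}(tu_0)\le \tfrac{t^2}{2}\norm{u_0}^2-c_1 t^\mu+(1+C)\vol{\Omega}\longrightarrow -\infty
\]
as $t\to\infty$ because $\mu>2$, and choosing $t=t_\ast$ large enough provides $u_1=t_\ast u_0$ with $\norm{u_1}>\rho$ and $J_{\eps_j}(u_1)<0$. The only delicate point is that $g(x,0)=0$ together with $(g_2')$ does not by itself rule out a linear term in $G$ near $s=0$; the monotonicity in $(g_4)$ is genuinely used to upgrade the near-zero growth of $g$ to order $s^{\mu-1}$ with $\mu>2$.
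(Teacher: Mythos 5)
Your argument is correct, but for \eqref{202} you take a more roundabout route than the paper and then draw a mistaken conclusion from it. The paper proves \eqref{202} using only $(g_2')$: integrate $(g_2')$ to get $G(x,s)\le a_3(1+s^p)$, and then exploit the fact that the argument fed to $G$ is $(u-1)_+$. When $(u-1)_+=0$ there is no contribution by $(g_1)$, and when $(u-1)_+>0$ one has $|u|>1$, so both the additive constant $a_3$ and the term $a_3(u-1)_+^p$ are dominated by $2a_3|u|^p$; the Sobolev embedding with $p>2$ then gives $J_{\eps_j}(u)\ge\frac12\norm{u}^2-C\norm{u}^p\ge\frac13\norm{u}^2$ for small $\norm{u}$. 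Your two-regime bound $G(x,s)\le C(s^\mu+s^p)$ via the monotonicity in $(g_4)$ is valid and also closes the argument, but it is not needed, and your closing remark that ``the monotonicity in $(g_4)$ is genuinely used to upgrade the near-zero growth of $g$'' is incorrect: a possible linear term of $G$ near $s=0$ is harmless precisely because the functional evaluates $G$ at $(u-1)_+$, not at $u$, so the small-$s$ behavior of $G$ is only probed when $(u-1)_+=0$. For \eqref{203} your choice $u_1=tu_0$ with $u_0\ge0$ and the paper's choice $u_1=u_0^-+tu_0^+$ are interchangeable; both reduce to $\mu>2$ and the AR-type lower bound \eqref{1031}, together with $0\le\B\le1$ so that the regularization term is at most $\vol\Omega$. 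In short: the proof is right, the route for \eqref{202} is heavier than necessary, and the final observation about which hypothesis is essential should be dropped.
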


\begin{proof}
By $(g_2')$,
\[
G(x,s) \le a_3\, (1 + s^p) \quad \forall (x,s) \in \Omega \times [0,\infty)
\]
for some $a_3 > 0$. Noting that $(s - 1)_+ \le |s|$ for all $s \in \R$, this gives
\[
G(x,(s - 1)_+) \le 2 a_3\, |s|^p \quad \forall (x,s) \in \Omega \times \R.
\]
Since $\B$ is nonnegative, then
\[
J_{\eps_j}(u) \ge \half \int_\Omega |\nabla u|^2\, dx - 2 a_3 \int_\Omega |u|^p\, dx \quad \forall u \in H^1_0(\Omega),
\]
and this together with the Sobolev embedding theorem gives \eqref{202} since $p > 2$.

To prove \eqref{203}, take a function $u_0 \in H^1_0(\Omega)$ with $u_0^+ \ne 0$ and let $u_1 = u_0^- + t u_0^+,\, t > 0$. Then
\[
J_{\eps_j}(u_1) = \int_\Omega \left[\half\, |\nabla u_0^-|^2 + \frac{t^2}{2}\, |\nabla u_0^+|^2 + \B\left(\frac{u_1 - 1}{\eps_j}\right) - G(x,t u_0^+)\right] dx \to - \infty
\]
as $t \to \infty$ since $\B$ is bounded and $G$ satisfies \eqref{1031} with $\mu > 2$, so \eqref{203} holds for sufficiently large $t$.
\end{proof}

By Lemma \ref{Lemma 200}, the class of paths
\[
\Gamma_j = \set{\gamma \in C([0,1],H^1_0(\Omega)) : \gamma(0) = 0,\, J_{\eps_j}(\gamma(1)) < 0}
\]
is nonempty and
\begin{equation} \label{22}
c_j := \inf_{\gamma \in \Gamma_j}\, \max_{u \in \gamma([0,1])}\, J_{\eps_j}(u) \ge \frac{1}{3}\, \rho^2.
\end{equation}

\begin{lemma} \label{Lemma 600}
The functional $J_{\eps_j}$ has a (nontrivial) critical point $u_j$ at the level $c_j$.
\end{lemma}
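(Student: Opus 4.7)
The plan is to invoke the classical Mountain Pass Theorem of Ambrosetti and Rabinowitz, since all of its hypotheses have been verified by the preceding two lemmas. Specifically, $J_{\eps_j}$ is of class $C^1$ on $H^1_0(\Omega)$ with $J_{\eps_j}(0) = 0$; Lemma \ref{Lemma 200} supplies the mountain pass geometry, exhibiting a radius $\rho > 0$ on whose sphere $J_{\eps_j}$ is bounded below by $\rho^2/3 > 0$ and a point $u_1$ outside that ball with $J_{\eps_j}(u_1) < 0$; and Lemma \ref{Lemma 201} provides the \PS{c} condition at every real level $c$, in particular at $c_j$.

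First I would recall that the minimax class $\Gamma_j$ is nonempty (the straight segment from $0$ to $u_1$ lies in $\Gamma_j$ by Lemma \ref{Lemma 200}), so the number $c_j$ defined in \eqref{22} is finite, and every $\gamma \in \Gamma_j$ must cross the sphere $\norm{u} = \rho$, whence
\[
c_j \ge \frac{1}{3}\, \rho^2 > 0.
\]
Applying the Mountain Pass Theorem to $J_{\eps_j}$ then produces a sequence $\seq{v_k} \subset H^1_0(\Omega)$ with $J_{\eps_j}(v_k) \to c_j$ and $J_{\eps_j}'(v_k) \to 0$ in $H^{-1}(\Omega)$. By Lemma \ref{Lemma 201}, a renamed subsequence converges strongly in $H^1_0(\Omega)$ to a limit $u_j$, which by continuity of $J_{\eps_j}$ and $J_{\eps_j}'$ satisfies
\[
J_{\eps_j}(u_j) = c_j, \qquad J_{\eps_j}'(u_j) = 0.
\]
Finally, since $c_j \ge \rho^2/3 > 0 = J_{\eps_j}(0)$, the critical point $u_j$ is nontrivial.

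There is no genuine obstacle here: the work has already been done in Lemmas \ref{Lemma 200} and \ref{Lemma 201}, and the only point requiring a brief comment is the strict positivity of $c_j$, which gives nontriviality. If a more refined conclusion were needed (for instance, that $u_j$ is actually a mountain pass point in the Hofer sense, as will be used later via Proposition \ref{Proposition 4}), one could instead invoke the version of the Mountain Pass Theorem due to Hofer, again on the basis of the same Palais--Smale and geometric hypotheses; but for the statement as given, the standard Ambrosetti--Rabinowitz form suffices.
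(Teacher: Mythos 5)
Your proposal is correct and follows essentially the same route as the paper: the paper simply unwinds the Mountain Pass Theorem into the underlying deformation argument (using \cite[Lemma 1.3.3]{MR3012848} to build a deformation $\eta: J_{\eps_j}^{c_j+\delta} \to J_{\eps_j}^{c_j-\delta}$ fixing $J_{\eps_j}^0$ and reaching a contradiction with the minimax definition of $c_j$), whereas you invoke the theorem as a black box applied to the path class $\Gamma_j$. The one minor caution is that $\Gamma_j$ consists of paths joining $0$ to the sublevel set $\set{J_{\eps_j} < 0}$ rather than to a fixed endpoint $u_1$, so one should use a formulation of the Mountain Pass/deformation lemma that produces a \PS{c_j} sequence at this particular minimax value (as the paper's deformation, being the identity on $J_{\eps_j}^0$, does directly); this is standard and does not affect the correctness of your argument.
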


\begin{proof}
For $a \in \R$, set
\[
J_{\eps_j}^a = \set{u \in H^1_0(\Omega) : J_{\eps_j}(u) \le a}.
\]
Since $J_{\eps_j}$ satisfies the \PS{c_j} condition by Lemma \ref{Lemma 201}, if $J_{\eps_j}$ has no critical point at the level $c_j$, then a standard deformation argument gives a constant $0 < \delta \le c_j/2$ and a continuous map
\[
\eta : J_{\eps_j}^{c_j + \delta} \to J_{\eps_j}^{c_j - \delta}
\]
such that $\eta$ is the identity on $J_{\eps_j}^0$ (see, e.g., Perera and Schechter \cite[Lemma 1.3.3]{MR3012848}). By the definition of $c_j$, there exists a path $\gamma \in \Gamma_j$ such that
\[
\max_{u \in \gamma([0,1])}\, J_{\eps_j}(u) \le c_j + \delta.
\]
Then $\widetilde{\gamma} := \eta \circ \gamma \in \Gamma_j$ and
\[
\max_{u \in \widetilde{\gamma}([0,1])}\, J_{\eps_j}(u) \le c_j - \delta,
\]
contradicting the definition of $c_j$.
\end{proof}

Since $u_j$ is nontrivial, it is positive in $\Omega$ as noted in the introduction. Next we note that $c_j$ is below the mountain pass level of $J$ given by \eqref{301}.

\begin{lemma} \label{Lemma 1}
We have $c_j \le c$, in particular, $c > 0$.
\end{lemma}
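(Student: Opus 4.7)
The plan is to exploit the pointwise comparison between the regularized interior functional and the sharp one. Specifically, since $\B$ takes values in $[0,1]$ with $\B(s)=0$ for $s\le 0$, we have for every $u\in H^1_0(\Omega)$ and every $x\in\Omega$ the inequality
\[
\B\!\left(\frac{u(x)-1}{\eps_j}\right) \le \goodchi_{\set{u>1}}(x),
\]
because on $\set{u\le 1}$ both sides vanish and on $\set{u>1}$ the left side is bounded by $1$. Integrating this inequality against $dx$ and subtracting $\int G(x,(u-1)_+)\,dx$ from both sides after adding $\int \tfrac12|\nabla u|^2\,dx$ yields
\[
J_{\eps_j}(u) \le J(u) \qquad \forall\, u\in H^1_0(\Omega).
\]

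From this pointwise functional inequality, the claim is immediate. For any path $\gamma\in\Gamma$, we have $J_{\eps_j}(\gamma(1))\le J(\gamma(1))<0$, so $\gamma\in\Gamma_j$; thus $\Gamma\subset\Gamma_j$. Moreover, along any such path,
\[
\max_{u\in\gamma([0,1])} J_{\eps_j}(u) \le \max_{u\in\gamma([0,1])} J(u).
\]
Taking the infimum over $\gamma\in\Gamma$ on both sides gives $c_j\le c$.

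The strict positivity $c>0$ then follows at once: by the mountain-pass estimate \eqref{22} established in Lemma \ref{Lemma 200}, we have $c_j\ge \rho^2/3>0$, and the inequality $c\ge c_j$ just proved yields $c>0$. There is really no substantive obstacle here; the only thing to verify carefully is the pointwise bound $\B((u-1)/\eps_j)\le \goodchi_{\set{u>1}}$, which is just a case check using the defining properties of $\B$ listed in the introduction.
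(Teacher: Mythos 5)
Your proof is correct and is essentially identical to the paper's argument: both rely on the pointwise bound $\B((t-1)/\eps_j)\le \goodchi_{\set{t>1}}$ to get $J_{\eps_j}\le J$, hence $\Gamma\subset\Gamma_j$ and $c_j\le c$, with $c>0$ following from the lower bound $c_j\ge\rho^2/3$ in \eqref{22}.
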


\begin{proof}
Since $\B((t - 1)/\eps_j) \le \goodchi_{\set{t > 1}}$ for all $t \in \R$, $J_{\eps_j}(u) \le J(u)$ for all $u \in H^1_0(\Omega)$. So $\Gamma \subset \Gamma_j$ and
\[
c_j \le \max_{u \in \gamma([0,1])}\, J_{\eps_j}(u) \le \max_{u \in \gamma([0,1])}\, J(u)
\]
for all $\gamma \in \Gamma$.
\end{proof}

Now we obtain the a priori estimates needed to apply Theorem \ref{Theorem 1}.

\begin{lemma}
The sequence $\seq{u_j}$ is bounded in $H^1_0(\Omega) \cap L^\infty(\Omega)$.
\end{lemma}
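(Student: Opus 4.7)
The plan is to establish the two bounds in turn, using that $u_j$ is an exact critical point at the mountain pass level $c_j \le c$ (Lemma~\ref{Lemma 1}).

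\textbf{$H^1_0$-boundedness.} I would run the Ambrosetti--Rabinowitz computation from the proof of Lemma~\ref{Lemma 201}, specialized to $u_j$ itself. Because $J_{\eps_j}'(u_j) = 0$ exactly, there is no $\o(\norm{u_j^+} + 1)$ remainder. Testing this identity against $u_j^+$ (using that $\nabla u_j \cdot \nabla u_j^+ = |\nabla u_j^+|^2$ almost everywhere and that $\eps_j^{-1}\, \beta\bigl((u_j - 1)/\eps_j\bigr)\, u_j^+$ equals $\beta\bigl((u_j - 1)/\eps_j\bigr)\, (u_j - 1)/\eps_j$ pointwise), the combination $J_{\eps_j}(u_j) - \mu^{-1}\, J_{\eps_j}'(u_j)\, u_j^+ = c_j$ reads
\[
c_j = \left(\half - \frac{1}{\mu}\right) \int_\Omega |\nabla u_j^+|^2\, dx + \half \int_\Omega |\nabla u_j^-|^2\, dx + I_j + K_j,
\]
where $I_j$ is the integral of $\B\bigl((u_j - 1)/\eps_j\bigr) - \mu^{-1}\, \beta\bigl((u_j - 1)/\eps_j\bigr)\, (u_j - 1)/\eps_j$ and $K_j$ the integral of $\mu^{-1}\, u_j^+ g(x,u_j^+) - G(x,u_j^+)$. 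The integrand in $I_j$ is bounded below by $-2/\mu$ since $\B \ge 0$ and $\beta(t)\, t \le 2$ for all $t \in \R$, while $K_j \ge 0$ by \eqref{103}. Combined with $c_j \le c$ and $\half - \mu^{-1} > 0$, this bounds $\|u_j^\pm\|$, and hence $\|u_j\|^2 = \|u_j^+\|^2 + \|u_j^-\|^2$, uniformly in $j$.

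\textbf{$L^\infty$-boundedness.} Because $\beta \ge 0$, testing \eqref{13} against any nonnegative function yields the distributional subsolution inequality
\[
- \Delta u_j \le g(x, u_j^+) \le a_1 + a_2\, u_j^{p - 1},
\]
where the second inequality uses $(g_2')$ together with $0 \le u_j^+ \le u_j$. The $\eps_j$-dependent term has disappeared, so this inequality is uniform in $j$. Together with the $H^1_0$-bound just established, the zero Dirichlet data, and the strict subcriticality (either $p < 2^\ast$ when $N \ge 3$ or $p > 2$ arbitrary when $N = 2$), this is the classical setting for a Brezis--Kato/Moser iteration: one writes $u_j^{p - 1} = u_j^{p - 2} \cdot u_j$ with $u_j^{p - 2}$ uniformly bounded in $L^{N/2}(\Omega)$ via the Sobolev embedding (the exponent $2^\ast/(p - 2)$ strictly exceeds $N/2$ when $p < 2^\ast$, and any finite exponent suffices when $N = 2$), and bootstraps from the $L^{2^\ast}$-bound up the $L^q$-scale to a uniform $L^\infty$-bound in finitely many steps.

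The main obstacle is exactly the uniformity in $j$ of this iteration, and it is handled by the sign observation above: the regularization term carrying the bad factor $1/\eps_j$ has the favorable sign and drops out of the subsolution inequality, so the iteration is carried out with $\eps_j$-independent constants.
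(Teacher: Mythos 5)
Your proposal is correct and follows essentially the same two-step route as the paper: the $H^1_0$ bound by the Ambrosetti--Rabinowitz computation at the exact critical point level $c_j \le c$, and the $L^\infty$ bound by observing that the $\eps_j^{-1}\beta$ term has favorable sign so $u_j$ is a uniform-in-$j$ subsolution of a subcritical semilinear equation. The only cosmetic difference is that you sketch the Brezis--Kato/Moser iteration explicitly, whereas the paper simply cites the $L^\infty$ estimate of Bonforte, Grillo, and V\'azquez for such subsolutions.
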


\begin{proof}
As in the proof of Lemma \ref{Lemma 201},
\[
\left(\half - \frac{1}{\mu}\right) \int_\Omega |\nabla u_j^+|^2\, dx + \half \int_\Omega |\nabla u_j^-|^2\, dx + \int_\Omega \left[\B\left(\frac{u_j^+}{\eps_j}\right) - \frac{1}{\mu}\, \beta\left(\frac{u_j^+}{\eps_j}\right) \frac{u_j^+}{\eps_j}\right] dx \le c_j.
\]
Since $\mu > 2$, $\B(t) \ge 0$ and $\beta(t)\, t \le 2$ for all $t \in \R$, and $c_j \le c$ by Lemma \ref{Lemma 1}, it follows from this that $\seq{u_j}$ is bounded in $H^1_0(\Omega)$.

We have
\[
- \Delta u_j = - \frac{1}{\eps_j}\, \beta\left(\frac{u_j^+}{\eps_j}\right) + g(x,u_j^+) \le (a_1 + a_2)\, (u_j^+)^{p-1} \le (a_1 + a_2)\, u_j^{p-1}
\]
by $(g_1)$ and $(g_2')$. This together with the fact that $\seq{u_j}$ is bounded in $H^1_0(\Omega)$ implies that $\seq{u_j}$ is also bounded in $L^\infty(\Omega)$ (see Bonforte et al.\! \cite{MR3155966}).
\end{proof}

We are now ready to prove Theorem \ref{Theorem 2}.

\begin{proof}[Proof of Theorem \ref{Theorem 2}]
Let $\seq{u_j}$ be the sequence of critical points of the functionals $J_{\eps_j}$ constructed above, and let $u \in H^1_0(\Omega) \cap C^2(\closure{\Omega} \setminus F(u))$ be the Lipschitz continuous limit of a suitable subsequence of $\seq{u_j}$ given by Theorem \ref{Theorem 1}. Since $J_{\eps_j}(u_j) = c_j$,
\[
\varlimsup J_{\eps_j}(u_j) \ge \frac{1}{3}\, \rho^2 > 0
\]
by \eqref{22}, so $u$ is nontrivial by Theorem \ref{Theorem 1} \ref{Theorem 1.iii}. Since $u$ satisfies the equation $- \Delta u = g(x,(u - 1)_+)$ in $\Omega \setminus F(u)$, if $u \le 1$ everywhere, then $u$ is harmonic in $\Omega$ by $(g_1)$ and hence vanishes identically by the maximum principle, so $u > 1$ in a nonempty open subset of $\Omega$. In the interior of $\set{u \le 1}$, $u$ is harmonic with boundary values $0$ on $\bdry{\Omega}$ and $1$ on $F(u)$, so $u$ is positive in $\Omega$. In the set $\set{u > 1}$, $u$ satisfies the equation $- \Delta u = g(x,u - 1)$, and integrating this equation multiplied by $u - 1$ shows that $u \in \M$. Combining this with Proposition \ref{Proposition 4}, Theorem \ref{Theorem 1} \ref{Theorem 1.iii}, and Lemma \ref{Lemma 1} gives
\[
c \le \inf_{v \in \M}\, J(v) \le J(u) \le \varliminf J_{\eps_j}(u_j) \le \varlimsup J_{\eps_j}(u_j) \le c,
\]
so
\[
c = \inf_{v \in \M}\, J(v) = J(u) = \lim J_{\eps_j}(u_j).
\]
Then $u$ is a mountain pass point of $J$ by Proposition \ref{Proposition 4}. Moreover, since $u$ minimizes $\restr{J}{\M}$ and satisfies the inequality $- \Delta u \le g(x,(u - 1)_+)$ in the distributional sense in $\Omega$, $u$ is nondegenerate and has the positive density property for $\set{u > 1}$ and $\set{u \le 1}$ by Proposition \ref{Proposition 3}. Hence $u$ satisfies the free boundary condition in the viscosity sense by Proposition \ref{Proposition 1}. Since $J_{\eps_j}(u_j) \to J(u)$, $u$ also satisfies the free boundary condition in the variational sense by Proposition \ref{Proposition 2}. So it follows from Theorem \ref{Theorem 4} that the free boundary $F(u)$ has finite $(N - 1)$-dimensional Hausdorff measure and is a $C^\infty$-hypersurface except on a closed set of Hausdorff dimension at most $N - 3$, and that near the smooth part of $F(u)$, $(u - 1)_\pm$ are smooth and the free boundary condition is satisfied in the classical sense.
\end{proof}

\section{Critical case} \label{Critical}

In this section we prove Theorem \ref{Theorem 3}. Let $\eps_j \searrow 0$. As in the last section, we will show that each approximating functional
\[
J_{\eps_j}(u) = \int_\Omega \left[\half\, |\nabla u|^2 + \B\left(\frac{u - 1}{\eps_j}\right) - \frac{\kappa}{2^\ast}\, (u^+)^{2^\ast} - \frac{\lambda}{\mu}\, (u^+)^\mu\right] dx
\]
has a critical point $u_j$ of mountain pass type and apply Theorem \ref{Theorem 1}.

Let
\begin{equation} \label{305}
S = \inf_{u \in H^1_0(\Omega) \setminus \set{0}}\, \frac{\dint_\Omega |\nabla u|^2\, dx}{\left(\dint_\Omega |u|^{2^\ast}\, dx\right)^{2/2^\ast}}
\end{equation}
be the best constant for the Sobolev embedding $H^1_0(\Omega) \hookrightarrow L^{2^\ast}(\Omega)$. The functional $J_{\eps_j}$ has the following compactness property.

\begin{lemma} \label{Lemma 301}
If
\[
0 < c < \frac{1}{N}\, \frac{S^{N/2}}{\kappa^{N/2 - 1}},
\]
then every {\em \PS{c}} sequence for $J_{\eps_j}$ has a subsequence that converges weakly to a nontrivial critical point $u$ of $J_{\eps_j}$ satisfying $J_{\eps_j}(u) \le c$.
\end{lemma}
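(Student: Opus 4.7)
My plan is to adapt the classical Brezis--Nirenberg concentration-compactness argument to the regularized functional $J_{\eps_j}$, keeping $\eps_j > 0$ fixed throughout so that the $\B$ and $\beta$ terms contribute only bounded quantities. Let $\seq{u_k}$ be a \PS{c} sequence. First I would establish boundedness in $H^1_0(\Omega)$ exactly as in Lemma \ref{Lemma 201}: since $2 < \mu < 2^\ast$, the combination $J_{\eps_j}(u_k) - (1/\mu)\, J_{\eps_j}'(u_k)\, u_k^+$ produces the positive coefficients $\left(\half - 1/\mu\right)$ in front of $\int |\nabla u_k^+|^2\, dx$, $\half$ in front of $\int |\nabla u_k^-|^2\, dx$, and $\kappa(1/\mu - 1/2^\ast)$ in front of $\int (u_k^+)^{2^\ast}\, dx$; the $\lambda$-contributions cancel identically, and the integrand $\B((u_k - 1)/\eps_j) - \beta((u_k - 1)/\eps_j)\, u_k^+/(\mu\, \eps_j)$ is pointwise bounded since $0 \le \B \le 1$ and $\beta(t)\, t \le 2$ for all $t$.

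Extract a subsequence with $u_k \wto u$ in $H^1_0(\Omega)$, $u_k \to u$ a.e., and $u_k \to u$ strongly in $L^q(\Omega)$ for every $q < 2^\ast$. Because $(u_k^+)^{2^\ast - 1}$ is bounded in $L^{(2^\ast)'}$ and converges a.e.\! to $(u^+)^{2^\ast - 1}$, it converges weakly in $L^{(2^\ast)'}$, so passing to the limit in $J_{\eps_j}'(u_k)\, \varphi = \o(1)$ for $\varphi \in C^\infty_0(\Omega)$ (the $\beta$-term by dominated convergence since $\eps_j$ is fixed) shows $u$ is a critical point of $J_{\eps_j}$. The core of the argument is a Brezis--Lieb decomposition. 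Set $v_k = u_k^+ - u^+$; by weak continuity of the Lipschitz truncation $s \mapsto (s - 1)_+$, $v_k \wto 0$ in $H^1_0(\Omega)$ and $v_k \to 0$ a.e. Testing $J_{\eps_j}'(u_k)\, v_k = \o(1)$ and processing the four terms --- weak convergence together with $\int |\nabla u_k^+|^2 = \int |\nabla u^+|^2 + \int |\nabla v_k|^2 + \o(1)$ for the gradient; dominated convergence for the $\beta$-term; Rellich compactness in $L^\mu$ for the $\mu$-term; and the Brezis--Lieb identity $\int (u_k^+)^{2^\ast} = \int (u^+)^{2^\ast} + \int |v_k|^{2^\ast} + \o(1)$ combined with weak convergence of $(u_k^+)^{2^\ast - 1}$ for the critical term --- yields the key identity
\[
\int_\Omega |\nabla v_k|^2\, dx - \kappa \int_\Omega |v_k|^{2^\ast}\, dx = \o(1).
\]
Setting $B = \lim \int |\nabla v_k|^2\, dx = \kappa \lim \int |v_k|^{2^\ast}\, dx$ along a further subsequence, the Sobolev inequality \eqref{305} forces the dichotomy $B = 0$ or $B \ge S^{N/2}/\kappa^{N/2 - 1}$.

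To conclude, I extract the energy identity. Setting $w_k = u_k - u$ and $s_k = u_k^- - u^-$, the three Brezis--Lieb identities applied to $\int |\nabla u_k|^2$, $\int |\nabla u_k^+|^2$, $\int |\nabla u_k^-|^2$ together with the pointwise orthogonal decomposition $|\nabla u_k|^2 = |\nabla u_k^+|^2 + |\nabla u_k^-|^2$ (and likewise for $u$) give $\int |\nabla w_k|^2 = \int |\nabla v_k|^2 + \int |\nabla s_k|^2 + \o(1)$. Combining with dominated convergence for $\int \B((u_k - 1)/\eps_j)\, dx$ and compactness for $\int (u_k^+)^\mu\, dx$ yields
\[
c = \lim J_{\eps_j}(u_k) = J_{\eps_j}(u) + \left(\half - \frac{1}{2^\ast}\right) B + \half\, D = J_{\eps_j}(u) + \frac{B}{N} + \half\, D,
\]
where $D = \lim \int |\nabla s_k|^2\, dx \ge 0$; in particular $J_{\eps_j}(u) \le c$. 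Since $J_{\eps_j}(0) = \int \B(-1/\eps_j)\, dx = 0$, if $u \equiv 0$ and $B > 0$ then $c \ge B/N \ge S^{N/2}/(N\, \kappa^{N/2 - 1})$, contradicting the hypothesis on $c$; and if $u \equiv 0$ and $B = 0$ then $u_k^+ \to 0$ strongly in $H^1_0(\Omega)$, and testing $J_{\eps_j}'(u_k)\, u_k = \o(1)$ --- in which the critical, $\mu$-, and $\beta$-terms all vanish in the limit because $u_k^+ \to 0$ in $L^{2^\ast}$ and $u_k \to 0$ in $L^2$ --- forces $\int |\nabla u_k|^2 \to 0$ and hence $c = 0$, also a contradiction. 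The main obstacle is the identity $\int |\nabla v_k|^2 - \kappa \int |v_k|^{2^\ast} = \o(1)$: the test function $v_k = u_k^+ - u^+$ (rather than $u_k - u$) must be chosen so that the critical term is cleanly isolated and the $\beta$- and subcritical contributions vanish. The level bound $c < (1/N)\, S^{N/2}/\kappa^{N/2 - 1}$ is then precisely the threshold excluding the formation of a Brezis--Nirenberg bubble in $v_k$.
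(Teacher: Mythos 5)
Your proof is correct, but it follows a genuinely different (and more elaborate) route than the paper. The paper avoids the Brezis--Lieb machinery entirely by writing
\[
J_{\eps_j}(u) = J_{\eps_j}(u) - \tfrac{1}{2^\ast}\, J_{\eps_j}'(u)\, u = \tfrac{1}{N} \int_\Omega |\nabla u|^2\, dx + K(u),
\qquad
c = \tfrac{1}{N} \int_\Omega |\nabla u_k|^2\, dx + K(u_k) + \o(1),
\]
and observing that the combination $J_{\eps_j} - \tfrac{1}{2^\ast}\, J_{\eps_j}'\,(\cdot)$ cancels the critical term exactly, so the remainder $K$ involves only the bounded $\B$, $\beta$ terms and subcritical powers of $u^+$ and is therefore weak-to-strong continuous; the bound $J_{\eps_j}(u) \le c$ then drops out immediately from weak lower semicontinuity of the norm, with no need for defects $B$, $D$ or an energy identity. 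For nontriviality the paper assumes $u = 0$, notes that all the compact/lower-order terms vanish in $J_{\eps_j}(u_k) \to c$ and $J_{\eps_j}'(u_k)u_k \to 0$, and reads off $\int|\nabla u_k|^2 \to Nc$, $\int(u_k^+)^{2^\ast} \to Nc/\kappa$, which together with the Sobolev inequality \eqref{305} forces $c \ge S^{N/2}/N\kappa^{N/2-1}$, a contradiction. Your approach instead sets up the full Brezis--Lieb decomposition of $u_k^+$, extracts the defect $B$ and the gap dichotomy $B = 0$ or $B \ge S^{N/2}/\kappa^{N/2-1}$, and proves the energy identity $c = J_{\eps_j}(u) + B/N + D/2$; this does work (your handling of the case $u = 0$, $B = 0$ via $J_{\eps_j}'(u_k)u_k \to 0$ is a correct extra step the paper never needs), and it yields a sharper splitting of the energy, but it requires several additional technical verifications --- that $u_k^+ \rightharpoonup u^+$ (which should be justified by boundedness plus a.e.\ convergence, not "weak continuity of the truncation"), the various Brezis--Lieb identities for $u_k^\pm$, and the pointwise orthogonality of $\nabla u_k^+$ and $\nabla u_k^-$ --- so it trades brevity for extra explicit structure.
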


\begin{proof}
Let $0 < c < S^{N/2}/N \kappa^{N/2 - 1}$ and let $\seq{u_k}$ be a \PS{c} sequence for $J_{\eps_j}$. We have
\begin{equation} \label{300}
J_{\eps_j}(u_k) = \int_\Omega \left[\half\, |\nabla u_k|^2 + \B\left(\frac{u_k - 1}{\eps_j}\right) - \frac{\kappa}{2^\ast}\, (u_k^+)^{2^\ast} - \frac{\lambda}{\mu}\, (u_k^+)^\mu\right] dx = c + \o(1)
\end{equation}
and
\begin{multline} \label{302}
J_{\eps_j}'(u_k)\, v = \int_\Omega \left[\nabla u_k \cdot \nabla v + \frac{1}{\eps_j}\, \beta\left(\frac{u_k - 1}{\eps_j}\right) v - \kappa\, (u_k^+)^{2^\ast - 1}\, v - \lambda\, (u_k^+)^{\mu - 1}\, v\right] dx\\[7.5pt]
= \o(\norm{v})
\end{multline}
for all $v \in H^1_0(\Omega)$. Subtracting \eqref{302} with $v = u_k^+/\mu$ from \eqref{300} gives
\begin{multline*}
\left(\half - \frac{1}{\mu}\right) \int_\Omega |\nabla u_k^+|^2\, dx + \half \int_\Omega |\nabla u_k^-|^2\, dx + \int_\Omega \left[\B\left(\frac{u_k^+}{\eps_j}\right) - \frac{1}{\mu}\, \beta\left(\frac{u_k^+}{\eps_j}\right) \frac{u_k^+}{\eps_j}\right] dx\\[7.5pt]
+ \kappa \left(\frac{1}{\mu} - \frac{1}{2^\ast}\right) \int_\Omega (u_k^+)^{2^\ast}\, dx = c + \o(\|u_k^+\| + 1).
\end{multline*}
Since $2 < \mu < 2^\ast$, and $\B(t) \ge 0$ and $\beta(t)\, t \le 2$ for all $t \in \R$, it follows from this that $\|u_k^\pm\|$, and hence also $\norm{u_k}$, is bounded. So a renamed subsequence of $\seq{u_k}$ converges to some $u$ weakly in $H^1_0(\Omega)$, strongly in $L^q(\Omega)$ for all $1 \le q < 2^\ast$, and a.e.\! in $\Omega$. Since $\beta$ is bounded, passing to the limit in \eqref{302} gives
\begin{equation} \label{303}
\int_\Omega \left[\nabla u \cdot \nabla v + \frac{1}{\eps_j}\, \beta\left(\frac{u - 1}{\eps_j}\right) v - \kappa\, (u^+)^{2^\ast - 1}\, v - \lambda\, (u^+)^{\mu - 1}\, v\right] dx = 0
\end{equation}
for all $v \in C^\infty_0(\Omega)$, and hence also for all $v \in H^1_0(\Omega)$ by density. So $u$ is a critical point of $J_{\eps_j}$.

Suppose $u = 0$. Then \eqref{300} and \eqref{302} with $v = u_k$ reduce to
\[
\frac{1}{2} \int_\Omega |\nabla u_k|^2\, dx - \frac{\kappa}{2^\ast} \int_\Omega (u_k^+)^{2^\ast}\, dx = c + \o(1), \qquad \int_\Omega |\nabla u_k|^2\, dx - \kappa \int_\Omega (u_k^+)^{2^\ast}\, dx = \o(1),
\]
respectively, so
\[
\int_\Omega |\nabla u_k|^2\, dx = Nc + \o(1), \qquad \int_\Omega (u_k^+)^{2^\ast}\, dx = \frac{Nc}{\kappa} + \o(1).
\]
Combining this with
\[
\int_\Omega |\nabla u_k|^2\, dx \ge S \left(\int_\Omega |u_k|^{2^\ast}\, dx\right)^{2/2^\ast} \ge S \left(\int_\Omega (u_k^+)^{2^\ast}\, dx\right)^{2/2^\ast}
\]
gives $c \ge S^{N/2}/N \kappa^{N/2 - 1}$, a contradiction. So $u$ is nontrivial.

It remains to show that $J_{\eps_j}(u) \le c$. We have
\[
J_{\eps_j}(u) = J_{\eps_j}(u) - \frac{1}{2^\ast}\, J_{\eps_j}'(u)\, u = \frac{1}{N} \int_\Omega |\nabla u|^2\, dx + K(u)
\]
and
\[
c = J_{\eps_j}(u_k) - \frac{1}{2^\ast}\, J_{\eps_j}'(u_k)\, u_k + \o(1) = \frac{1}{N} \int_\Omega |\nabla u_k|^2\, dx + K(u_k) + \o(1),
\]
where
\begin{multline*}
K(u) = \int_\Omega \bigg[\B\left(\frac{u - 1}{\eps_j}\right) - \frac{1}{2^\ast\, \eps_j}\, \beta\left(\frac{u - 1}{\eps_j}\right) u + \frac{\kappa}{2^\ast}\, (u^+)^{2^\ast - 1} - \frac{\lambda}{\mu}\, (u^+)^\mu\\[7.5pt]
+ \frac{\lambda}{2^\ast}\, (u^+)^{\mu - 1}\, u\bigg]\, dx.
\end{multline*}
Since $u_k \wto u$ and $K(u_k) \to K(u)$, the desired conclusion follows.
\end{proof}

As in the last section, $J_{\eps_j}$ has the mountain pass geometry and the minimax level $c_j$ defined in \eqref{22} satisfies
\begin{equation} \label{601}
\inf c_j > 0.
\end{equation}
Let $u_0 \in H^1_0(\Omega)$ with $u_0^+ \ne 0$. For $t > 0$,
\[
J_{\eps_j}(u_0^- + t u_0^+) \le \int_\Omega \left[\half\, |\nabla u_0^-|^2 + \frac{t^2}{2}\, |\nabla u_0^+|^2 - \frac{\lambda t^\mu}{\mu}\, (u_0^+)^\mu\right] dx + \vol{\Omega}
\]
since $\B \le 1$ and $\kappa > 0$. Since $\mu > 2$, the right-hand side goes to $- \infty$ as $t \to \infty$, so there exists a $t_1 > 0$ such that $J_{\eps_j}(u_0^- + t_1 u_0^+) < 0$ for all $j$. Let $\gamma$ be any path joining the origin to $u_0^- + t_1 u_0^+$. Then $\gamma \in \Gamma_j$ and hence
\[
c_j \le \max_{u \in \gamma([0,1])}\, J_{\eps_j}(u) \le \max_{u \in \gamma([0,1])} \int_\Omega \left[\half\, |\nabla u|^2 - \frac{\lambda}{\mu}\, (u^+)^\mu\right] dx + \vol{\Omega}.
\]
Since the last expression is independent of $j$ and $\kappa$, there exists a $\kappa^\ast > 0$ such that for $0 < \kappa < \kappa^\ast$,
\begin{equation} \label{602}
c_j < \frac{1}{N}\, \frac{S^{N/2}}{\kappa^{N/2 - 1}}
\end{equation}
for all $j$.

\begin{lemma} \label{Lemma 5.2}
For $0 < \kappa < \kappa^\ast$, $J_{\eps_j}$ has a nontrivial critical point $u_j$ satisfying $J_{\eps_j}(u_j) \le c_j$.
\end{lemma}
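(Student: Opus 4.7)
The plan is to combine the mountain pass geometry of $J_{\eps_j}$ with the conditional compactness result of Lemma \ref{Lemma 301}. Because the critical Sobolev exponent destroys the Palais--Smale condition at arbitrary levels, the deformation argument used in Lemma \ref{Lemma 600} does not apply directly; however, the quantitative bound \eqref{602} keeps $c_j$ strictly below the concentration threshold at which a Palais--Smale sequence can collapse to the zero function.

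First I would produce a Palais--Smale sequence for $J_{\eps_j}$ at the level $c_j$ \emph{without} assuming the PS condition a priori. This is a standard consequence of the minimax characterization alone: choose paths $\gamma_k \in \Gamma_j$ with $\max_{t \in [0,1]} J_{\eps_j}(\gamma_k(t)) \le c_j + 1/k$, and apply either Ekeland's variational principle on the path space or the Brezis--Nirenberg form of the mountain pass theorem to extract points $u_k$ on these paths satisfying $J_{\eps_j}(u_k) \to c_j$ and $J_{\eps_j}'(u_k) \to 0$ in $H^{-1}(\Omega)$.

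Next, by \eqref{602} and the choice of $\kappa^\ast$, we have $c_j < S^{N/2}/(N\, \kappa^{N/2-1})$ for every $j$. Applying Lemma \ref{Lemma 301} to the Palais--Smale sequence $(u_k)$ produced above then yields, after passing to a subsequence, a weak limit $u_j \in H^1_0(\Omega)$ that is a nontrivial critical point of $J_{\eps_j}$ and satisfies $J_{\eps_j}(u_j) \le c_j$. This is the conclusion.

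The main obstacle is precisely the failure of the Palais--Smale condition for the critical functional; it is overcome by the threshold bound \eqref{602}, which is exactly what powers the concentration-compactness analysis inside Lemma \ref{Lemma 301} to rule out a zero weak limit. Note that we do not and cannot assert that $u_j$ achieves the minimax value $c_j$; only the inequality $J_{\eps_j}(u_j) \le c_j$ survives, but this is all that is needed to feed the sequence $(u_j)$ into Theorem \ref{Theorem 1} in the subsequent proof of Theorem \ref{Theorem 3}.
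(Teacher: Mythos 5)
Your proof is correct and reaches the desired conclusion, but via a different route than the paper. You invoke directly the standard fact that mountain-pass geometry alone produces a \PS{c_j} sequence at the minimax level, without any compactness assumption --- e.g. via Ekeland's variational principle applied on the space of paths, or via a quantitative deformation lemma --- and then feed that sequence to Lemma~\ref{Lemma 301} with the threshold bound \eqref{602} to extract a nontrivial critical point $u_j$ with $J_{\eps_j}(u_j) \le c_j$.

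The paper instead argues by contradiction and stays within the machinery it already set up in the subcritical section. It observes that if $J_{\eps_j}$ admitted \emph{no} \PS{c_j} sequence at all, then the \PS{c_j} condition would hold vacuously, so the deformation argument from the proof of Lemma~\ref{Lemma 600} would produce a critical point $u_0$ at the exact level $c_j$; but then the constant sequence $(u_0)$ would be a \PS{c_j} sequence, a contradiction. Hence a \PS{c_j} sequence exists, and Lemma~\ref{Lemma 301} finishes the job exactly as in your argument. The two routes are equivalent in substance: both use \eqref{602} to keep $c_j$ below the concentration threshold, and both funnel everything through Lemma~\ref{Lemma 301}. Your version is arguably the more transparent one for a reader who knows the mountain-pass-without-PS literature, since it produces the PS sequence constructively; the paper's version is a self-contained trick that avoids citing any additional external theorem beyond the deformation lemma it has already used, at the cost of the slightly unusual ``vacuous PS'' reasoning. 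Both are correct.
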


\begin{proof}
In view of Lemma \ref{Lemma 301}, \eqref{601}, and \eqref{602}, it suffices to show that $J_{\eps_j}$ has a \PS{c_j} sequence. Suppose not. Then $J_{\eps_j}$ satisfies the \PS{c_j} condition vacuously and hence has a critical point $u_0$ at the level $c_j$ as in the proof of Lemma \ref{Lemma 600}. But then the constant sequence $\seq{u_0}$ is a \PS{c_j} sequence for $J_{\eps_j}$, a contradiction.
\end{proof}

As in the last section, $u_j > 0$ in $\Omega$ and $c_j \le c$, in particular, $c > 0$. In order to apply Theorem \ref{Theorem 1}, first we show that $\seq{u_j}$ is bounded in $H^1_0(\Omega)$.

\begin{lemma} \label{Lemma 650}
There exists a constant $C_1 > 0$, independent of $j$ and $\kappa$, such that
\[
\norm{u_j} \le C_1 \quad \forall j.
\]
\end{lemma}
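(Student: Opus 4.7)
The plan is to mimic the computation used in the proof of Lemma \ref{Lemma 301}, but applied to the critical point $u_j$ itself (so that the $\o(1)$ and $\o(\|u_j^+\|)$ terms are absent), and to combine it with the $\kappa$\nobreakdash-independent upper bound on $c_j$ that was established just before Lemma \ref{Lemma 5.2}.

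First I would record that the argument preceding \eqref{602} actually yields a constant $M_0 > 0$, depending only on $\lambda$, $\mu$, and $\Omega$ (and in particular independent of $j$ and $\kappa$), such that $c_j \le M_0$ for all $j$. Indeed, the path $\gamma$ connecting $0$ to $u_0^- + t_1 u_0^+$ can be chosen once and for all, and the estimate
\[
c_j \le \max_{u \in \gamma([0,1])} \int_\Omega \left[\half\, |\nabla u|^2 - \frac{\lambda}{\mu}\, (u^+)^\mu\right] dx + \vol{\Omega}
\]
is manifestly independent of $j$ and $\kappa$.

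Next, since $u_j$ is a critical point of $J_{\eps_j}$, one has $J_{\eps_j}'(u_j)\, u_j^+ = 0$. Forming $J_{\eps_j}(u_j) - \frac{1}{\mu}\, J_{\eps_j}'(u_j)\, u_j^+$ exactly as in the proof of Lemma \ref{Lemma 301}, the $\lambda$-terms cancel and one obtains
\[
J_{\eps_j}(u_j) = \int_\Omega \left[\left(\half - \frac{1}{\mu}\right) |\nabla u_j^+|^2 + \half\, |\nabla u_j^-|^2 + \B\left(\frac{u_j^+}{\eps_j}\right) - \frac{1}{\mu}\, \beta\left(\frac{u_j^+}{\eps_j}\right) \frac{u_j^+}{\eps_j} + \kappa \left(\frac{1}{\mu} - \frac{1}{2^\ast}\right) (u_j^+)^{2^\ast}\right] dx.
\]
Using $\B \ge 0$, $\beta(t)\, t \le 2$ for all $t \in \R$, $\mu < 2^\ast$, and $\kappa > 0$, I would drop the $\B$ and $\kappa$ terms (both nonnegative contributions) and bound the $\beta$ term below by $-\frac{2}{\mu}\, \vol{\Omega}$, giving
\[
\left(\half - \frac{1}{\mu}\right) \int_\Omega |\nabla u_j^+|^2\, dx + \half \int_\Omega |\nabla u_j^-|^2\, dx \le J_{\eps_j}(u_j) + \frac{2}{\mu}\, \vol{\Omega} \le c_j + \frac{2}{\mu}\, \vol{\Omega} \le M_0 + \frac{2}{\mu}\, \vol{\Omega}.
\]
Since $\mu > 2$ the coefficient $\half - \frac{1}{\mu}$ is a positive absolute constant, and since $\nabla u_j^+$ and $\nabla u_j^-$ have disjoint supports, $\int_\Omega |\nabla u_j|^2\, dx = \int_\Omega |\nabla u_j^+|^2\, dx + \int_\Omega |\nabla u_j^-|^2\, dx$ is bounded by a constant $C_1^2$ depending only on $\mu$, $\lambda$, and $\Omega$.

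There is no real obstacle in this argument; the only delicate point is verifying that the upper bound $c_j \le M_0$ extracted from the preceding discussion is genuinely independent of $\kappa$. This is where it is essential that, in constructing the negative-energy endpoint of the mountain pass path, we used the crude inequality $\B \le 1$ to discard the $\B$-term and the sign $\kappa > 0$ to discard the critical term, leaving a functional that depends on $\lambda$ and $\Omega$ but not on $\kappa$ or $j$.
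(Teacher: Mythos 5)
Your proof is correct and takes essentially the same route as the paper: subtract $\frac{1}{\mu}J_{\eps_j}'(u_j)\,u_j^+$ from $J_{\eps_j}(u_j)$, discard the nonnegative $\B$- and $\kappa$-terms (the latter having a good sign since $\mu < 2^\ast$), bound the $\beta$-term using $\beta(t)\,t \le 2$, and invoke a uniform bound on $c_j$. You are in fact slightly more careful than the paper, which cites only $c_j \le c$ (a bound that on its face depends on $\kappa$), whereas you correctly trace the $\kappa$-independence back to the explicit $\kappa$-free estimate on $c_j$ established just before Lemma \ref{Lemma 5.2}.
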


\begin{proof}
As in the proof of Lemma \ref{Lemma 301},
\begin{multline*}
\left(\half - \frac{1}{\mu}\right) \int_\Omega |\nabla u_j^+|^2\, dx + \half \int_\Omega |\nabla u_j^-|^2\, dx + \int_\Omega \left[\B\left(\frac{u_j^+}{\eps_j}\right) - \frac{1}{\mu}\, \beta\left(\frac{u_j^+}{\eps_j}\right) \frac{u_j^+}{\eps_j}\right] dx\\[7.5pt]
+ \kappa \left(\frac{1}{\mu} - \frac{1}{2^\ast}\right) \int_\Omega (u_j^+)^{2^\ast}\, dx = c_j.
\end{multline*}
Since $2 < \mu < 2^\ast$, $\B(t) \ge 0$ and $\beta(t)\, t \le 2$ for all $t \in \R$, and $c_j \le c$, it follows from this that $\norm{u_j}$ is bounded independently of $j$ and $\kappa$.
\end{proof}

Next we show that $\seq{u_j}$ is also bounded in $L^\infty(\Omega)$ if $\kappa$ is further restricted. We will make use of the following $L^\infty$ bound obtained in Perera and Silva \cite{MR2282829}.

\begin{proposition}[{\cite[Lemma A.1 \& Remark A.3]{MR2282829}}] \label{Proposition 4.3}
Let $u \in H^1_0(\Omega)$ be a weak solution of the problem
\begin{equation} \label{4.6}
\left\{\begin{aligned}
- \Delta u & = h(x,u) && \text{in } \Omega\\[10pt]
u & > 0 && \text{in } \Omega\\[10pt]
u & = 0 && \text{on } \bdry{\Omega},
\end{aligned}\right.
\end{equation}
where $h$ is a Carath\'eodory function on $\Omega \times (0,\infty)$. Assume that there exist $r > N/2$ and $a \in L^r(\Omega)$ such that $h(x,t) \le a(x)\, t$ for a.a.\! $x \in \Omega$ and all $t > 0$. Then there exists a constant $C > 0$, depending only on $\Omega$, $\pnorm[r]{a}$, and $\pnorm[2]{u}$, such that
\[
\pnorm[\infty]{u} \le C,
\]
where $\pnorm[p]{\cdot}$ denotes the $L^p$-norm.
\end{proposition}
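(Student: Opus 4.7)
The plan is to establish the $L^\infty$ bound via a Brezis-Kato / Moser iteration argument. The condition $r>N/2$, equivalently $r' := r/(r-1) < N/(N-2) = 2^\ast/2$, is exactly what is needed to obtain a gain of integrability at each step of the iteration.

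First I would set up the iteration. For $M>0$ and $\beta \ge 0$, let $u_M = \min(u,M) \in L^\infty(\Omega)$ and use the admissible test function $\varphi = u\, u_M^{2\beta} \in H^1_0(\Omega)$ in the weak form of \eqref{4.6}. Expanding $\nabla(u\, u_M^\beta)$ on the sets $\{u \le M\}$ and $\{u > M\}$ gives, after absorbing the cross term,
\[
\int_\Omega \left|\nabla(u\, u_M^\beta)\right|^2 dx \;\le\; C(\beta + 1)^2 \int_\Omega \nabla u \cdot \nabla(u\, u_M^{2\beta})\, dx \;=\; C(\beta + 1)^2 \int_\Omega h(x,u)\, u\, u_M^{2\beta}\, dx.
\]
Using $h(x,u) \le a(x)\, u$ on the right, Sobolev's inequality \eqref{305} on the left, and H\"older's inequality with exponents $r$ and $r'$ yields
\[
S\, \pnorm[2^\ast]{u\, u_M^\beta}^2 \;\le\; C(\beta+1)^2\, \pnorm[r]{a}\, \pnorm[2r']{u\, u_M^\beta}^2.
\]

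Second, I would perform the Brezis-Kato splitting to start the iteration. Since $2r' < 2^\ast$, for any $\eta > 0$ we can write $a = a\, \goodchi_{\set{a \le K}} + a\, \goodchi_{\set{a > K}}$ with $K$ large enough that the $L^r$-norm of the second piece is smaller than $\eta$. Applying H\"older with $2r' < 2^\ast$, the contribution of the second piece is bounded by $C\eta (\beta+1)^2 \pnorm[2^\ast]{u\, u_M^\beta}^2$, which is absorbed into the left-hand side for $\eta$ small. The bounded part contributes a term controlled by $K \pnorm[2]{u\, u_M^\beta}^2$. Letting $M \to \infty$ (monotone convergence, justified once each iterate is finite) gives
\[
\pnorm[2^\ast(\beta+1)]{u}^{2(\beta+1)} \;\le\; C(\beta+1)^2 K \pnorm[2(\beta+1)]{u}^{2(\beta+1)}.
\]

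Third, I would iterate. Starting from $u \in L^{2^\ast}(\Omega)$ (from $u \in H^1_0(\Omega)$), define $p_0 = 2^\ast$ and $p_{k+1} = (2^\ast/2)\, p_k$, and apply the above inequality with $2(\beta_k+1) = p_k$. Each step gains the factor $\chi = 2^\ast/2 > 1$, so after finitely many Moser steps one reaches $u \in L^{q}(\Omega)$ with $q > N$; elliptic $L^q$-regularity then upgrades to $u \in L^\infty(\Omega)$, with a quantitative bound tracked through the iteration in terms of $\pnorm[r]{a}$ and $\pnorm[2]{u}$ alone (see e.g.\ the standard estimate $\prod_k (C(\beta_k+1)^2 K)^{1/p_k} < \infty$, which converges because $\sum_k k/\chi^k < \infty$).

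The main obstacle is the Brezis-Kato absorption step: one must use the strict inequality $r > N/2$ (not just $\ge$) to produce room between $2r'$ and $2^\ast$, and then exploit absolute continuity of $\int a^r$ to split $a$ and absorb the bad piece. Once this is done, the Moser iteration itself is routine. A subtle point worth verifying is that each finite $p_k$-norm on the left-hand side is genuinely finite before letting $M \to \infty$; this is handled in the standard way by monitoring the truncation and using that the estimate for given $M$ only involves $\pnorm[p_k]{u_M} \le M^{1 - p_{k-1}/p_k}\, \pnorm[p_{k-1}]{u}^{p_{k-1}/p_k}$ before passing to the limit.
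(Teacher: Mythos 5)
The overall plan---Moser iteration driven by the test functions $u\,u_M^{2\beta}$ and the Sobolev inequality---is the right one, but your Brezis--Kato splitting in the second step introduces a genuine gap in the quantitative conclusion. The truncation level $K$ you need to make $\pnorm[r]{a\,\goodchi_{\set{a > K}}} < \eta$ is controlled by the \emph{modulus of absolute continuity} of $\int_\Omega |a|^r\,dx$, not by $\pnorm[r]{a}$ alone: a sequence $a_n$ with $\pnorm[r]{a_n} \equiv 1$ but concentrating can force $K \to \infty$. So any bound you track through the iteration inherits a dependence on $a$ beyond its $L^r$-norm, whereas the proposition asserts dependence only on $\Omega$, $\pnorm[r]{a}$, and $\pnorm[2]{u}$. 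There is a second, more technical issue: the absorption of the term $C\eta\,(\beta+1)^2\,\pnorm[2^\ast]{u\,u_M^\beta}^2$ into the left-hand side requires $\eta$ to shrink like $(\beta+1)^{-2}$, which makes $K = K(\beta)$ grow without bound along the iteration; the final sentence of your third paragraph, which treats $K$ as a fixed constant in $\prod_k \bigl(C(\beta_k+1)^2 K\bigr)^{1/p_k}$, does not account for this. Restricting to finitely many iteration steps (to reach $L^q$, $q > N$) fixes the second issue but not the first.

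The splitting is in fact unnecessary here, and dropping it repairs both problems. Since $r > N/2$ \emph{strictly}, you already have $2r' < 2^\ast$, so the straight H\"older step in your first paragraph,
\[
S\,\pnorm[2^\ast]{u\,u_M^\beta}^2 \;\le\; C(\beta+1)^2\,\pnorm[r]{a}\,\pnorm[2r']{u\,u_M^\beta}^2,
\]
is already a gain: letting $M \to \infty$ gives $\pnorm[\chi q]{u} \le \bigl(C(\beta+1)^2\,\pnorm[r]{a}/S\bigr)^{1/q}\,\pnorm[q]{u}$ with $q = 2r'(\beta+1)$ and $\chi := 2^\ast/(2r') > 1$. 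Iterating with $q_k = 2^\ast\chi^k$ yields a convergent product whose value depends only on $N$, $\Omega$, and $\pnorm[r]{a}$, hence $\pnorm[\infty]{u} \le C(N,\Omega,\pnorm[r]{a})\,\pnorm[2^\ast]{u}$. To convert $\pnorm[2^\ast]{u}$ to $\pnorm[2]{u}$, test the equation with $u$ itself and interpolate $\pnorm[2r']{u} \le \pnorm[2^\ast]{u}^\theta\,\pnorm[2]{u}^{1-\theta}$ (possible since $2 < 2r' < 2^\ast$) to obtain $\pnorm[2^\ast]{u} \le (\pnorm[r]{a}/S)^{1/(2-2\theta)}\,\pnorm[2]{u}$. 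The Brezis--Kato decomposition is the right tool for the borderline case $r = N/2$, where no such gain is available and one settles for a qualitative $L^\infty$ conclusion; here the strict inequality $r > N/2$ is precisely what lets you avoid it and obtain the uniform bound the proposition claims. (Note also that the paper does not prove this proposition; it is quoted from Perera--Silva \cite{MR2282829}, so there is no in-paper proof to compare against.)
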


\begin{lemma} \label{Lemma 651}
There exist constants $0 < \kappa_\ast \le \kappa^\ast$ and $C_2 > 0$, independent of $j$ and $\kappa$, such that for $0 < \kappa < \kappa_\ast$,
\[
\pnorm[\infty]{u_j} \le C_2 \quad \forall j.
\]
\end{lemma}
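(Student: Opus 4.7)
The plan is to combine a smallness absorption in a single Brezis--Kato step with Proposition \ref{Proposition 4.3}. Since $\beta \ge 0$ and $u_j > 0$ (so $u_j^+ = (u_j-1)_+ \le u_j$), equation \eqref{13} gives the distributional subsolution inequality
\[
- \Delta u_j \;\le\; \kappa\, u_j^{2^\ast - 1} + \lambda\, u_j^{\mu - 1} \;=\; V_j(x)\, u_j, \qquad V_j := \kappa\, u_j^{2^\ast - 2} + \lambda\, u_j^{\mu - 2}.
\]
By Lemma \ref{Lemma 650} and the Sobolev embedding, $\pnorm[2^\ast]{u_j} \le K := S^{-1/2} C_1$ uniformly in $j$ and $\kappa$. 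Split $V_j = V_j^{(1)} + V_j^{(2)}$ with $V_j^{(1)} := \lambda\, u_j^{\mu-2}$ and $V_j^{(2)} := \kappa\, u_j^{2^\ast - 2}$. Since $\mu < 2^\ast$, one has $r_0 := 2^\ast/(\mu - 2) > N/2$ and $\norm[L^{r_0}]{V_j^{(1)}} \le \lambda K^{\mu - 2}$, uniformly in $j,\kappa$, while $V_j^{(2)} \in L^{N/2}$ only borderline, with $\norm[L^{N/2}]{V_j^{(2)}} \le \kappa K^{2^\ast - 2}$.

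The main obstacle is precisely this borderline integrability of the critical potential, which prevents a direct appeal to Proposition \ref{Proposition 4.3}. I would overcome it by a truncated Moser test: for $s > 1$ and $M > 0$, plug $u_j\, \varphi_M^{2(s-1)}$ (with $\varphi_M := \min(u_j,M)$) into the subsolution inequality, and let $M \to \infty$. The standard computation, combined with Sobolev, yields a constant $c_s > 0$ such that
\[
c_s\, S\, \Big(\int_\Omega u_j^{s\, 2^\ast}\, dx\Big)^{2/2^\ast}
\;\le\; \int_\Omega V_j^{(2)}\, u_j^{2s}\, dx + \int_\Omega V_j^{(1)}\, u_j^{2s}\, dx.
\]
H\"older with exponents $(N/2, N/(N-2))$ bounds the critical term by $\kappa K^{2^\ast - 2} (\int u_j^{s\, 2^\ast})^{2/2^\ast}$, which can be absorbed into the left-hand side once
\[
\kappa_\ast := \min\Big\{\kappa^\ast,\; \tfrac{c_s S}{2 K^{2^\ast - 2}}\Big\}.
\]
H\"older with exponents $(r_0, r_0')$ bounds the subcritical term by $\lambda K^{\mu-2} (\int u_j^{2 s r_0'})^{1/r_0'}$; by choosing $s > 1$ sufficiently close to $1$ one has $2 s r_0' < s\, 2^\ast$, so interpolation between $L^{2^\ast}$ and $L^{s\, 2^\ast}$ together with the uniform $L^{2^\ast}$ bound and Young's inequality yields a uniform bound
\[
\pnorm[s\, 2^\ast]{u_j} \le C_3 \quad \text{for } 0 < \kappa < \kappa_\ast,
\]
independent of $j$ and $\kappa$, with $s > 1$ fixed.

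This improved integrability then feeds back into $V_j$: choosing $r := s\, 2^\ast/(2^\ast - 2) > N/2$, we get a uniform bound $\pnorm[r]{V_j} \le C_4$. Writing the equation \eqref{13} as $- \Delta u_j = h_j(x,u_j)$ with $h_j(x,t) \le (\kappa\, t^{2^\ast - 2} + \lambda\, t^{\mu-2})\, t \le V_j(x)\, t$ (using $\beta \ge 0$ and $u_j^+ \le u_j$), Proposition \ref{Proposition 4.3} applied with $a = V_j$ yields $\pnorm[\infty]{u_j} \le C_2$, uniformly in $j$ and $\kappa \in (0,\kappa_\ast)$. The crux is the smallness of $\kappa$, which alone makes the borderline term absorbable in the first Moser step.
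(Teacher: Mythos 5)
Your proof is correct and takes essentially the same route as the paper: a single Brezis--Kato/Moser iteration step in which the smallness of $\kappa$ absorbs the borderline critical potential, yielding an improved uniform integrability bound, followed by an application of Proposition~\ref{Proposition 4.3}. The paper tests directly with $u_j^{2^\ast - 1}$ (justified by $C^1$ regularity of $u_j$, so no truncation is needed) and uses Young's inequality on $u_j^{\mu-1}$ before testing rather than H\"older and interpolation afterward, and fixes the explicit exponent $s = N/(N-2)$; these are only cosmetic differences from your argument.
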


\begin{proof}
The function $u_j \in H^1_0(\Omega)$ is a weak solution of problem \eqref{4.6} with
\[
h(x,t) = - \frac{1}{\eps_j}\, \beta\left(\frac{t - 1}{\eps_j}\right) + \kappa\, (u_j(x) - 1)_+^{4/(N-2)}\, (t - 1)_+ + \lambda\, (u_j(x) - 1)_+^{\mu - 2}\, (t - 1)_+.
\]
We have
\[
h(x,t) \le \left[\kappa u_j(x)^{4/(N-2)} + \lambda u_j(x)^{\mu - 2}\right] t \quad \forall (x,t) \in \Omega \times (0,\infty).
\]
Let $r = N^2/2\, (N - 2)$ and $w_j = u_j^{N/(N-2)}$. Then $r > N/2$ and
\[
|u_j^{4/(N-2)}|_r = |w_j^{4/N}|_r = \pnorm[2^\ast]{w_j}^{4/N},
\]
so it suffices to show that $\pnorm[2^\ast]{w_j}$ is bounded for sufficiently small $\kappa$ by Proposition \ref{Proposition 4.3} and Lemma \ref{Lemma 650}.

By \eqref{305},
\begin{equation} \label{4.7}
S \left(\int_\Omega w_j^{2^\ast}\, dx\right)^{2/2^\ast} \le \int_\Omega |\nabla w_j|^2\, dx = \left(\frac{N}{N - 2}\right)^2 \int_\Omega u_j^{4/(N-2)}\, |\nabla u_j|^2\, dx.
\end{equation}
Since $u_j \in C^1(\closure{\Omega})$ by standard regularity arguments, we may test
\[
- \Delta u_j = - \frac{1}{\eps_j}\, \beta\left(\frac{u_j^+}{\eps_j}\right) + \kappa\, (u_j^+)^{2^\ast - 1} + \lambda\, (u_j^+)^{\mu - 1} \le (\kappa + \tau \eps^{1/\tau} \lambda)\, u_j^{2^\ast - 1} + \frac{(1 - \tau)\, \lambda}{\eps^{1/(1 - \tau)}}\, u_j,
\]
where $\tau = (\mu - 2)/(2^\ast - 2)$ and $\eps > 0$, with $u_j^{2^\ast - 1}$ to get
\begin{equation} \label{4.8}
\frac{N + 2}{N - 2} \int_\Omega u_j^{4/(N-2)}\, |\nabla u_j|^2\, dx \le (\kappa + \eps^{1/\tau} \lambda) \int_\Omega u_j^{4/(N-2)}\, w_j^2\, dx + \frac{\lambda}{\eps^{1/(1 - \tau)}} \int_\Omega u_j^{2^\ast}\, dx.
\end{equation}
Since
\[
\int_\Omega u_j^{4/(N-2)}\, w_j^2\, dx \le \left(\int_\Omega u_j^{2^\ast}\, dx\right)^{2/N} \left(\int_\Omega w_j^{2^\ast}\, dx\right)^{2/2^\ast}
\]
by the H\"{o}lder inequality, and $\pnorm[2^\ast]{u_j}$ is bounded by Lemma \ref{Lemma 650} and the Sobolev embedding, it follows from \eqref{4.7} and \eqref{4.8} that $\pnorm[2^\ast]{w_j}$ is bounded if $\kappa$ and $\eps$ are sufficiently small.
\end{proof}

We are now ready to prove Theorem \ref{Theorem 3}.

\begin{proof}[Proof of Theorem \ref{Theorem 3}]
Let $0 < \kappa < \kappa_\ast$ and let $\seq{u_j}$ be the sequence of critical points of the functionals $J_{\eps_j}$ given by Lemma \ref{Lemma 5.2}. Then $\seq{u_j}$ is bounded in $H^1_0(\Omega) \cap L^\infty(\Omega)$ by Lemmas \ref{Lemma 650} and \ref{Lemma 651}. Let $u \in H^1_0(\Omega) \cap C^2(\closure{\Omega} \setminus F(u))$ be the Lipschitz continuous limit of a suitable subsequence of $\seq{u_j}$ given by Theorem \ref{Theorem 1}. Since $J_{\eps_j}'(u_j) = 0$,
\[
J_{\eps_j}'(u_j)\, u_j = \int_\Omega \left[|\nabla u_j|^2 + \frac{1}{\eps_j}\, \beta\left(\frac{u_j^+}{\eps_j}\right) u_j - \kappa\, (u_j^+)^{2^\ast - 1}\, u_j - \lambda\, (u_j^+)^{\mu - 1}\, u_j\right] dx = 0,
\]
and since $u_j > 0$, this together with the Sobolev embedding theorem gives
\[
\norm{u_j}^2 \le \int_\Omega \left(\kappa u_j^{2^\ast} + \lambda u_j^\mu\right) dx \le C \left(\norm{u_j}^{2^\ast} + \norm{u_j}^\mu\right)
\]
for some constant $C > 0$. Since $u_j$ is nontrivial and $2 < \mu < 2^\ast$, this implies that $\inf \norm{u_j} > 0$, and since $u_j \to u$ in $H^1_0(\Omega)$, then $u$ is nontrivial. Since $u$ satisfies the equation $- \Delta u = \kappa\, (u - 1)_+^{2^\ast - 1} + \lambda\, (u - 1)_+^{\mu - 1}$ in $\Omega \setminus F(u)$, if $u \le 1$ everywhere, then $u$ is harmonic in $\Omega$ and hence vanishes identically by the maximum principle, so $u > 1$ in a nonempty open subset of $\Omega$. In the interior of $\set{u \le 1}$, $u$ is harmonic with boundary values $0$ on $\bdry{\Omega}$ and $1$ on $F(u)$, so $u$ is positive in $\Omega$. In the set $\set{u > 1}$, $u$ satisfies the equation $- \Delta u = \kappa\, (u - 1)^{2^\ast - 1} + \lambda\, (u - 1)^{\mu - 1}$, and integrating this equation multiplied by $u - 1$ shows that $u$ is in
\[
\M = \set{u \in \W : \int_\Omega |\nabla u^+|^2\, dx = \int_\Omega \left[\kappa\, (u^+)^{2^\ast} + \lambda\, (u^+)^\mu\right] dx},
\]
where $\W = \bigset{u \in H^1_0(\Omega) : u^+ \ne 0}$. Combining this with Proposition \ref{Proposition 4}, Theorem \ref{Theorem 1} \ref{Theorem 1.iii}, and
\[
J_{\eps_j}(u_j) \le c_j \le c
\]
gives
\[
c \le \inf_{v \in \M}\, J(v) \le J(u) \le \varliminf J_{\eps_j}(u_j) \le \varlimsup J_{\eps_j}(u_j) \le c,
\]
so
\[
c = \inf_{v \in \M}\, J(v) = J(u) = \lim J_{\eps_j}(u_j).
\]
Then $u$ is a mountain pass point of $J$ by Proposition \ref{Proposition 4}. Moreover, since $u$ minimizes $\restr{J}{\M}$ and satisfies the inequality $- \Delta u \le \kappa\, (u - 1)_+^{2^\ast - 1} + \lambda\, (u - 1)_+^{\mu - 1}$ in the distributional sense in $\Omega$, $u$ is nondegenerate and has the positive density property for $\set{u > 1}$ and $\set{u \le 1}$ by Proposition \ref{Proposition 3}. Hence $u$ satisfies the free boundary condition in the viscosity sense by Proposition \ref{Proposition 1}. Since $J_{\eps_j}(u_j) \to J(u)$, $u$ also satisfies the free boundary condition in the variational sense by Proposition \ref{Proposition 2}. So it follows from Theorem \ref{Theorem 4} that the free boundary $F(u)$ has finite $(N - 1)$-dimensional Hausdorff measure and is a $C^\infty$-hypersurface except on a closed set of Hausdorff dimension at most $N - 3$, and that near the smooth part of $F(u)$, $(u - 1)_\pm$ are smooth and the free boundary condition is satisfied in the classical sense.
\end{proof}

\def\cdprime{$''$}

\end{document}